\theoremstyle{plain}
\newtheorem{thm}{Theorem}
\newtheorem{lem}[thm]{Lemma}
\newtheorem{cor}[thm]{Corollary}
\theoremstyle{definition}
\newcommand{\C}{\ensuremath{\mathcal{C}}}
\newcommand{\N}{\ensuremath{\mathbb{N}}}
\newcommand{\D}{\ensuremath{\mathcal{D}}}
\newcommand{\T}{\ensuremath{\mathcal{T}}}
\newcommand{\V}{\ensuremath{\mathcal{V}}}
\newcommand{\W}{\ensuremath{\mathcal{W}}}
\newcommand{\e}{\ensuremath{\varepsilon}}
\renewcommand{\P}{\ensuremath{\mathcal{P}}}
\newcommand{\Q}{\ensuremath{\mathcal{Q}}}
\newcommand{\Hc}{\ensuremath{\mathcal{H}}}
\newcommand{\COMMENT}[1]{}
\newcommand{\?}[1]{%
  \marginpar{%
    \begin{minipage}{\marginparwidth}\small%
      \begin{flushleft}%
        #1%
      \end{flushleft}%
   \end{minipage}%
  }%
}%
\renewcommand{\?}[1]{}
\newcommand{\boundary}{\partial}
\newcommand{\closure}{\overline}
\newcommand{\interior}{\mathring}
\renewcommand{\hom}{\simeq}
\title{On the excluded minor structure theorem\\ for graphs of large treewidth}
\author{Reinhard Diestel, Ken-ichi Kawarabayashi, \\ Theodor M\"uller, Paul Wollan}
 \date{21 November, 2011}
\begin{document}  
    
\maketitle

\begin{abstract}

At the core of the Robertson-Seymour theory of graph minors lies
a powerful structure theorem which captures, for any fixed graph~$H$, the common structural features of all the graphs not containing $H$ as a minor.\penalty-200 Robertson and Seymour prove several versions of this  theorem, each stressing some particular aspects needed at a corresponding stage of the proof of the main result of their theory, the graph minor theorem.

We prove a new version of this structure theorem: one that seeks to combine maximum applicability with a minimum of technical ado, and which might serve as a canonical version for future applications in the broader field of graph minor theory.  Our proof departs from a simpler version proved explicitly by Robertson and Seymour. It then uses a combination of traditional methods and new techniques to derive some of the more subtle features of other versions as well as further useful properties, with substantially simplified proofs.
\end{abstract}


\section{Introduction}

Graphs in this paper are finite and may have loops and multiple edges. Otherwise we use the terminology of~\cite{diestel}.
A graph $H$ is a \emph{minor} of a graph $G$ if $H$ can be obtained
from a subgraph of $G$ by contracting edges.

The theory of graph minors was developed by Robertson and Seymour, in a series of 23 papers published over more than twenty years, with the aim of proving a single result: the {\em graph minor theorem\/}, which says that in any infinite collection of finite graphs there
 is one that is a minor of another. As with other deep results in mathematics, the body of theory developed for the proof of the graph minor theorem has also found applications elsewhere, both within graph theory and computer science. Yet many of these applications rely not only on the general techniques developed by Robertson and Seymour to handle graph minors, but also on one particular auxiliary result that is also central to the proof of the graph minor theorem: a~result describing the structure of all graphs $G$ not containing some fixed other graph~$H$ as a minor.

This structure theorem has many facets. It roughly says that
every graph $G$ as above can be decomposed into parts that can each be `almost' embedded in a surface of bounded genus (the bound depending on~$H$ only), and which fit together in a tree structure~\cite[Thm.~12.4.11]{diestel}. Although later dubbed a `red herring' (in the search for the proof of the graph minor theorm) by Robertson and Seymor themselves~\cite{GM16}, this simplest version of the structure theorem is the one that appears now to be best known, and which has also found the most algorithmic applications \cite{demaine1,demaine2,demaine3,kmstoc}.

A particularly simple form of this structure theorem applies when the excluded minor $H$ is planar: in that case, the said parts of~$G$---the parts that fit together in a tree-structure and together make up all of~$G$---have bounded size, i.e., $G$~has bounded {\em tree-width\/}. If $H$ is not planar, the graphs $G$ not containing $H$ as a minor have unbounded tree-width, and therefore contain arbitrarily large grids as minors and arbitrarily large walls as topological minors~\cite{diestel}. Such a large grid or wall identifies, for every low-order separation of~$G$, one side in which most of that grid or wall lies. This is formalized by the notion of a {\em tangle\/}: the larger the tree-width of~$G$, the larger the grid or wall, the order of the separations for which this works, and (thus) the {\em order\/} of the tangle. Since adjacent parts in our tree-decomposition of $G$ meet in only a bounded number of vertices
   and thus define low-order separations, our large-order tangle `points to' one of the parts, the part $G'$ that contains most of its defining grid or wall.

The more subtle versions of the structure theorem, such as Theorem~(13.4) from Graph Minors~XVII~\cite{GM17}, now focus just on this part $G'$ of~$G$. Like every part in our decomposition, it intersects every other part in a controlled way. Every such intersection consists of a bounded number of vertices, of which some lie in a fixed {\em apex set\/} $A\subseteq V(G')$ of bounded size, while the others are either at most 3 vertices lying on a face boundary of the portion $G_0$ of~$G'$ embedded in the surface, or else lie in (a common bag of) a so-called {\em vortex\/}, a ring-like subgraph of $G'$ that is not embedded in the surface and meets $G_0$ only in (possibly many) vertices of a face boundary of~$G_0$. The precise structure of these vortices, of which $G'$ has only boundedly many, will be the focus of our attention for much of the paper. Our theorem describes in detail both the inner structure of the vortices and the way in which they are linked to each other and to the large wall, by disjoint paths in the surface. These are the properties that have been used in applications of the structure theorems such as~\cite{BKMM,DKW}, and which will doubtless be important also in future applications. An important part of the proofs is a new technique for analyzing vortices.  We note that these techniques have also been independently developed by Geelen and Huynh~\cite{GH}.

The basis for this paper is Theorem~(3.1) from Graph Minors~XVI~\cite{GM16}, which we shall restate as Theorem~\ref{thm:3.1}. Together with the `grid-theorem' that large enough tree-width forces arbitrarily large grid minors (see~\cite{diestel}), and a simple fact about tangles from Graph Minors~X~\cite{GM10}, these are all the results we require from the Graph Minor series.

This paper is organized as follows. In Section~\ref{sec:structure_thms} we introduce the terminology we need to state our results, as well as the theorem from Graph Minors~XVI~\cite{GM16} on which we shall base our proof. Section~\ref{sec:findingtd} explains how we can find the tree-decomposition indicated earlier, with some additional information on how the parts of the tree-decomposition overlap. Section~\ref{sec:tools} collects some lemmas about graphs embedded in a surface, partly from the literature and partly new. In Section~\ref{sec:modifying} we show how a given near-embedding of a graph can be simplified in various ways if we allow ourselves to remove a bounded number of vertices (which, in applications of these tools, will be added to the apex set). Section~\ref{sec:pathsystems} contains lemmas showing how to obtain path systems with nice properties. Section~\ref{sec:mainresult} contains the proof of our structure theorem. In the last section, we give an alternative definition of vortex decompositions and show that our result works with these `circular' decompositions as well.


\section{Structure Theorems}
\label{sec:structure_thms}


A \textit{vortex} is a pair $V = (G,\Omega)$, where $G$ is a graph and $\Omega =: \Omega(V)$ is a linearly ordered set $(w_1, \ldots, w_n)$ of vertices in~$G$. These vertices are the \textit{society vertices} of the vortex; their number $n$ is its \textit{length}. We do not always distinguish notationally between a vortex and its underlying graph or the vertex set of that graph; for example, a \textit{subgraph of $V$} is just a subgraph of~$G$, a \textit{subset of $V$} is a subset of $V(G)$, and so on. Also, we will often use $\Omega$ to refer to the linear order of the vertices $w_1, \dots, w_n$ as well as the set of vertices $\{w_1, \dots, w_n\}$.

A path-decomposition $\D = (X_1,\ldots,X_m)$ of $G$ is a \textit{decomposition} of our vortex~$V$ if $m=n$ and $w_i\in X_i$ for all~$i$. The \textit{depth} of the vortex~$V$ is the minimum width of a path-decomposition of $G$ that is a decomposition of~$V$.

When $n>1$, the \emph{adhesion} of our decomposition $\D$ of $V$ is the maximum value of ${|X_{i}\cap X_{i+1}|}$, taken over all $1\leq i < n$. We define the \textit{adhesion} of a vortex $V$ as the minimum adhesion of a decomposition of that vortex.

When $\D$ is a decomposition of a vortex $V$ as above, we write ${Z_i:=(X_{i}\cap X_{i+1})\setminus \Omega}$, for all $1 \leq i< n$.  These $Z_i$ are the \emph{adhesion sets} of $\D$. We call $\D$ \textit{linked} if 
\begin{itemize}
\item all these $Z_i$ have the same size;
\item there are $|Z_i|$ disjoint $Z_{i-1}$--$Z_{i}$ paths in~$G[X_i]-\Omega$, for all $1<i<n$;
\item $X_i \cap \Omega =\{w_{i-1},w_{i}\}$ for all $1\leq i \leq n$, where $w_0:=w_1$.
\end{itemize}
Note that $X_i\cap X_{i+1} = Z_i \cup \{w_i\}$, for all $1\le i < n$ (Fig.~\ref{fig:linkedvortex}).

\begin{figure}[ht]
\centering
\noindent
\includegraphics{}
\caption{A linked vortex decomposition}\label{fig:linkedvortex}
\end{figure}

The union over all $1<i<n$ of the $Z_{i-1}$--$Z_{i}$ paths in a linked decomposition of $V$ is a disjoint union of $X_1$--$X_n$ paths in~$G$; we call the set of these paths a \emph{linkage} of~$V$ with respect to $(X_1,\ldots,X_m)$.

Clearly, if $V$ has a linked decomposition as above, then $G$ has no edges between non-consecutive society vertices, since none of the $X_i$ could contain both ends of such an edge. Conversely, if $G$ has no such edges then $V$ does have a linked decomposition: just let $X_i$ consist of all the vertices of $G-\Omega$ plus $w_{i-1}$ and~$w_{i}$. We shall be interested in linked vortex decompositions whose adhesion is small, unlike  in this example. 


Let $V=(G,\Omega)$ be a vortex, and $v$ a vertex of some supergraph of $V$. Clearly, $(G-v,\Omega\setminus\{v\})$ is a vortex, too, which we denote by $V-v$\?{$V-v$}. If the length of $V$ is greater than 2,
this operation cannot increase the adhesion $q$ of $V$: This is clear for $v\notin\Omega$, so suppose $\Omega=(w_1,\ldots,w_n)$ with $v=w_k$ for some $1\leq k \leq n$. We may assume without loss of generality that $k\neq n$. Take a decomposition $(X_1,\ldots,X_n)$ of $V$ of adhesion $q$. Then, it is easy to see that 
 \[
 	(X_1,\ldots, X_{k-1},(X_{k}\cup X_{k+1})\setminus\{w_k\},X_{k+1},\ldots,X_n)
\] 
is a decomposition of $V-v$ of adhesion at most $q$. We shall not be interested in the adhesion of vortices of length at most 2. For a vertex set $A\subseteq V$ we denote by $V-A$\?{$V-A$} the vortex we obtain by deleting the vertices of $A$ in turn. For a set of vortices $\V$ we define $\V-A:=\{V-A:V\in \V, V-A\neq \emptyset\}$.\?{$\V-A$}

A (directed) \textit{separation} of a graph $G$ is an ordered pair $(A,B)$ of non-empty subsets of $V(G)$ such that $G[A]\cup G[B] = G$. The number $|A\cap B|$ is the \textit{order} of $(A,B)$. Whenever we speak of separations in this paper, we shall mean such directed separations.

A set $\T$\?{$\T$} of separations of~$G$, all of order less than some integer~$\theta$, is a \textit{tangle of order $\theta$} if the following holds: 
\begin{enumerate}[(1)]
\item For every separation $(A,B)$ of $G$ of order less than $\theta$, either $(A,B)$ or $(B,A)$ lies in~$\T$.
\item If $(A_i,B_i)\in \T$ for $i=1,2,3$, then $G[A_1]\cup G[A_2] \cup G[A_3] \neq G$.
\end{enumerate}

Note that if $(A,B)\in\T$  then $(B,A)\notin\T$; we think of $A$ as the `small side' of the separation~$(A,B)$, with respect to this tangle.

Given a tangle $\T$ of order $\theta$ in a graph $G$, and a set $Z\subseteq V(G)$ of fewer than $\theta$ vertices, let $\T-Z$\?{$\T-Z$} denote the set of all separations  $(A',B')$ of $G-Z$ of order less than $\theta-|Z|$ such that there exists a separation $(A,B)\in \T$ with $Z\subseteq A\cap B$, $A-Z=A'$ and $B-Z=B'$. It is shown in \cite[Theorem (6.2)]{GM10} that $\T-Z$ is a tangle of order $\theta-|Z|$ in $G-Z$.

Given a subset $D$ of a surface $\Sigma$, we write $\interior D$, $\boundary D$, and $\closure D$ for the topological interior, boundary, and closure, of $D$ in $\Sigma$, respectively. 
For positive integers $\alpha_0, \alpha_1, \alpha_2$ and $\alpha:=(\alpha_0,\alpha_1,\alpha_2)$, a graph $G$ is \textit{$\alpha$-nearly embeddable} in $\Sigma$ if there is a subset $A\subseteq V(G)$ with $|A|\leq \alpha_0$ such that there are integers  $\alpha'\leq \alpha_1$ and $n\geq \alpha'$ for which $G-A$ can be written as the union of $n+1$ edge-disjoint graphs $G_0,\ldots,G_n$ with the following properties:
\begin{enumerate}[(i)]
	\item  For all  $1\leq i\leq j \leq n$ and $\Omega_i:= V(G_i\cap G_0)$, the pairs $(G_i,\Omega_i)=:V_i$ are vortices, and $G_i\cap G_j \subseteq G_0$ when $i\neq j$ . 
	\item The vortices $V_1,\ldots,V_{\alpha'}$ are disjoint and have adhesion at most $\alpha_2$; we denote the set of these vortices by $\V$. 
	We will sometimes refer to these vortices as \emph{large} vortices.
	\item The vortices $V_{\alpha'+1},\ldots,V_n$ have length at most 3; we denote the set of these vortices by $\W$. 
	These are the \emph{small} vortices of the near-embedding.
	\item There are closed discs in~$\Sigma$, with disjoint interiors $D_1,\ldots, D_n$, and an embedding ${\sigma: G_0 \hookrightarrow \Sigma-\bigcup_{i=1}^n D_i}$ such that $\sigma(G_0)\cap\boundary D_i = \sigma(\Omega_i)$ for all~$i$ and the generic linear ordering of $\Omega_i$ is compatible with the natural cyclic ordering of its image (i.e., coincides with the linear ordering of $\sigma(\Omega_i)$ induced by $[0,1)$ when $\boundary D_i$ is viewed as a suitable homeomorphic copy of $[0,1]/\{0,1\}$). For $i=1,\dots,n$ we think of the disc $D_i$ as \emph{accommodating} the (unembedded) vortex~$V_i$, and denote $D_i$ as~$D(V_i)$.
\end{enumerate}

We call $(\sigma,G_0,A,\V,\W)$ an \emph{$\alpha$-near embedding} of $G$ in~$\Sigma$, or just a \emph{near-embedding}, with \emph{apex set} $A$. For an integer $\alpha'$ larger than all the $\alpha_i$ we call $(\sigma,G_0,A,\V,\W)$ an \emph{$\alpha'$-near embedding}. It \textit{captures} a tangle $\T$ of $G$ if the `large side' $B'$ of an element $(A',B')\in \T-A$ is never contained in a vortex. 

A direct implication of Theorem~(3.1) from \cite{GM16}, stated in this terminology, reads as follows: 
\begin{thm}\label{thm:3.1}
For every non-planar graph $R$ there exist integers $\theta,\alpha \geq 0$ such that the following holds: Let $G$ be a graph that does not contain $R$ as a minor, and let $\T$ be a tangle in $G$ of order at least $\theta$. Then $G$ has an $\alpha$-near embedding, with apex set $A$ say, in a surface $\Sigma$ in which $R$ cannot be drawn, and this embedding captures $\T-A$.
\end{thm}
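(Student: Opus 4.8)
The plan is to deduce the theorem from Theorem~(3.1) of Graph Minors~XVI~\cite{GM16} by translating its conclusion into the terminology set up above. In its own language, that theorem provides, for each non-planar $R$, integers $\theta$ and $\kappa$ such that whenever $G$ has no $R$-minor and $\T$ is a tangle in $G$ of order at least~$\theta$, there are a set $Z\subseteq V(G)$ with $|Z|\le\kappa$, a surface $\Sigma$ in which $R$ cannot be drawn, a subgraph $G_0$ of $G-Z$ drawn in $\Sigma$, and an arrangement of vortices $G_1,\dots,G_n$ of bounded depth, only boundedly many of which have more than three society vertices, such that $G_0\cup G_1\cup\dots\cup G_n=G-Z$ and the vortices are attached to $G_0$ along the boundaries of discs with pairwise disjoint interiors, in such a way that the tangle $\T-Z$ is located in the drawn part. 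The task is to check that this object is, after minor adjustments, an $\alpha$-near embedding in our sense for some $\alpha=\alpha(\kappa)$, and that it captures $\T-A$.

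Concretely, I would set $A:=Z$; then $\T-A$ is a tangle of $G-A$ by \cite[Theorem~(6.2)]{GM10}, so the assertion to be verified is meaningful. From the segregation and arrangement supplied by~(3.1) one reads off the edge-disjoint decomposition $G-A=G_0\cup G_1\cup\dots\cup G_n$ with $\Omega_i:=V(G_i\cap G_0)$, the inclusions $G_i\cap G_j\subseteq G_0$ for $i\ne j$ (part of what ``segregation'' means), and the closed discs $D_1,\dots,D_n\subseteq\Sigma$ with disjoint interiors together with the embedding $\sigma\colon G_0\hookrightarrow\Sigma-\bigcup_{i=1}^n D_i$ meeting each $\boundary D_i$ precisely in $\sigma(\Omega_i)$ and matching the generic linear order of $\Omega_i$ with the cyclic order of its image; this yields~(i) and~(iv). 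Partitioning the vortices according to whether they have more than three society vertices produces the large vortices---their number is bounded by~$\kappa$, and after re-indexing they are $V_1,\dots,V_{\alpha'}$---and the small vortices $V_{\alpha'+1},\dots,V_n$, which by construction have length at most~$3$; this gives~(iii) and the counting part of~(ii). For the adhesion bound in~(ii) I use that each large vortex has depth at most~$\kappa$ in Robertson and Seymour's sense, hence admits a path-decomposition into $|\Omega_i|$ bags of width at most~$\kappa$ with the society vertices appearing in order---that is, a decomposition of the vortex---of adhesion at most $\kappa+1$. Finally one makes the large vortices pairwise disjoint if they are not already so (an inexpensive correction, absorbed into~$A$), and takes $\alpha$ to be the maximum of $\kappa$, $\kappa+1$ and the size of these corrections; then (i)--(iv) all hold.

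It remains to verify that $(\sigma,G_0,A,\V,\W)$ captures $\T-A$: that for every $(A',B')\in\T-A$ the large side $B'$ is not contained in any single vortex~$V_i$. Informally this is clear---a vortex lies inside its disc $D_i$ and so occupies only a ``small'' part of $\Sigma$, whence the tangle $\T-A$, being located in the drawn part by~(3.1), must point away from every disc---but turning this into a proof means matching Robertson and Seymour's formalisation of how a tangle is placed relative to a drawing (via the tangle induced by the drawing, and via the separations of $G-A$ whose separators lie near the boundaries $\boundary D_i$) with our clean separation-based condition, and keeping track of exactly which low-order separations of $G-A$ arise from the discs. This is the step I expect to be the main obstacle. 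By contrast, reconciling Robertson and Seymour's depth parameter with our single adhesion parameter, splitting off the small vortices, and the bookkeeping that makes the large vortices disjoint are all routine, and cost only boundedly many apex vertices.
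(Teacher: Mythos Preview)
The paper does not give a proof of this theorem at all: it is introduced as ``a direct implication of Theorem~(3.1) from~\cite{GM16}, stated in this terminology'', and is then used as a black box throughout. Your proof sketch is therefore not competing with any argument in the paper; rather, you are spelling out the translation that the authors declare to be immediate. In that sense your approach is exactly the intended one.

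Your sketch is accurate in outline. A couple of remarks. First, the disjointness of the large vortices and the passage from bounded depth to bounded adhesion really are routine, as you say; in the Robertson--Seymour set-up distinct vortices can share only society vertices on their common disc boundaries, and since only boundedly many vortices are large this costs only boundedly many extra apex vertices. Second, the step you flag as the main obstacle---matching Robertson and Seymour's notion that the tangle is ``located'' in the drawn part with the paper's clean condition that no large side $B'$ of a separation in $\T-A$ lies inside a single vortex---is indeed the only place where one has to look carefully at the definitions in~\cite{GM16}, but it unwinds directly: a vortex $V_i$ together with $A$ defines a separation of $G$ of order at most $|A|+|\Omega_i|$ (bounded for small vortices, and for large vortices via an adhesion set), and Robertson and Seymour's conclusion says precisely that the tangle puts the vortex on the small side of each such separation. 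So this step is more bookkeeping than a genuine obstacle.
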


We shall use Theorem~\ref{thm:3.1} as the basis of our proofs in this paper.

Given a near-embedding $(\sigma,G_0,A,\V,\W)$ of $G$, let $G'_0$ be the graph resulting from $G_0$ by joining any two nonadjacent vertices $u,v\in G_0$ that lie in a common small vortex $V\in\W$; the new edge $uv$ of $G'_0$ will be called a \emph{virtual edge}. By embedding these virtual edges disjointly in the discs $D(V)$ accommodating their vortex~$V$, we extend our embedding $\sigma\colon G_0\hookrightarrow\Sigma$ to an embedding $\sigma'\colon G'_0\hookrightarrow\Sigma$. We shall not normally distinguish $G'_0$ from its image in $\Sigma$ under~$\sigma'$.

A vortex $(G_i,\Omega_i)$ is \emph{properly attached} if $|\Omega_i|\leq 3$ and it satisfies the following two requirements. First, for every pair of distinct vertices $u,v\in \Omega_i$ the graph $G_i$ must contain an $\Omega_i$-path (one with no inner vertices in~$\Omega_i$) from $u$ to~$v$. Second, whenever $u,v,w\in\Omega_i$ are distinct vertices (not necessarily in this order), there are two internally disjoint $\Omega_i$-paths in $G_i$ linking $u$ to~$v$ and $v$ to~$w$, respectively.

Clearly, if $(G_i,\Omega_i)$ is properly attached to $G_0$, the vortex $(G_i-v,\Omega_i\setminus\{v\})$ is properly attached to $G_0-v$ for any vertex $v\in\Omega_i$. 

Given a graph $H$ embedded in our surface $\Sigma$, a curve $C$ in $\Sigma$ is \emph{$H$-normal} if it hits $H$ in vertices only. The \textit{distance in $\Sigma$} of two points $x,y\in \Sigma$ is the minimal value of $|G'_0\cap C|$ taken over all $G'_0$-normal curves $C$ in the surface that link $x$ to~$y$.
The \textit{distance in $\Sigma$} of two vortices $V$ and $W$  is the minimum distance in $\Sigma$ of a vertex in $\Omega(V)$ from a vertex  in~$\Omega(W)$. Similar, the  \textit{distance in $\Sigma$} of two subgraphs $H$ and $H'$ of $G'_0$ is the minimum distance in $\Sigma$ of a vertex in $H$ from a vertex in~$H'$. 

A cycle $C$ in $\Sigma$ is \emph{flat} if $C$ bounds an open disc $D(C)$ in $\Sigma$. Disjoint cycles $C_1,\ldots,C_n$ in $\Sigma$ are \emph{concentric} if they bound open discs $D(C_1) \supseteq \ldots \supseteq D(C_n)$ in~$\Sigma$. A set $\P$ of paths intersects $C_1,\ldots,C_n$ \emph{orthogonally}, and is \emph{orthogonal to $C_1,\ldots,C_n$}, if every path $P$ in $\P$ intersects each of the cycles in a (possibly trivial but non-empty) subpath of~$P$.


Let $G$ be a graph embedded in a surface $\Sigma$, and $\Omega$ a subset of its vertices. Let $C_1,\ldots,C_n$ be cycles in $G$ that are concentric in $\Sigma$. The cycles $C_1,\ldots,C_n$ \emph{enclose} $\Omega$ if $\Omega \subseteq D(C_n)$. They \emph{tightly enclose} $\Omega$ if, in addition, the following holds:
\begin{equation*}
	\begin{minipage}[c]{0.8\textwidth}\em
		For all $1\leq k \leq n$ and every point $v\in \boundary D(C_k)$, there is a vertex $w\in\Omega$ 
		whose distance from $v$ in $\Sigma$ is at most $n-k+2$.
	\end{minipage}\ignorespacesafterend 
\end{equation*}

For a near-embedding $(\sigma,G_0,A,\V,\W)$ of a graph $G$ in a surface $\Sigma$ and concentric cycles $C_1,\ldots,C_n$ in $G'_0$, a vortex $V\in \V$ is \emph{(tightly) enclosed} by these cycles if they (tightly) enclose $\Omega(V)$.

A flat triangle in $G'_0$ is a \emph{boundary triangle} if it bounds a disc that is a face of $G'_0$ in~$\Sigma$.

For positive integers~$r\geq 3$, define a graph $H_r$ as follows (Fig.~\ref{fig:H6}).  Let $P_1, \dots, P_r$ 
be $r$ disjoint (`horizontal') paths of length $r-1$, say $P_i = v_1^i\dots
v_{r}^i$. Let $V(H_r) = \bigcup_{i=1}^r V(P_i)$, and let
\begin{equation*}
	\begin{split}
		E(H_r) = \bigcup_{i=1}^r E(P_i) \cup
		\Big\{&v_j^i v_j^{i+1} \mid \text{ $i,j$ odd};\ 1 \le i < r;\ 1 \le j \le r\Big\} \\
		& \cup \Big\{v_j^i v_j^{i+1} \mid \text{ $i,j$ even};\ 1 \le i < r;\ 1 \le j \le r\Big\}.
	\end{split}
\end{equation*}

We call the paths $P_i$ the \emph{rows} of $H_r$; the paths induced by the vertices $\{v^i_j,v^i_{j+1}:1\leq i\leq r\}$ for an odd index $i$ are its \emph{columns}.
\begin{figure}[ht]
\centering
\noindent
\includegraphics{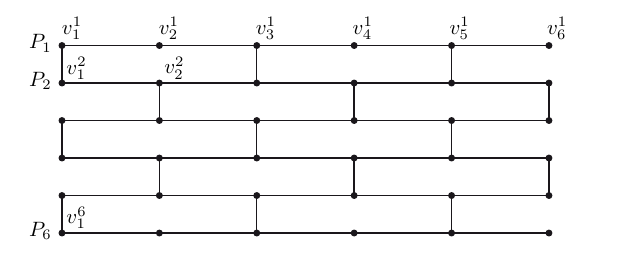}
\caption{The graph $H_6$}\label{fig:H6}
\end{figure}

The 6-cycles in $H_r$ are its \emph{bricks}. In the natural plane embedding of~$H_r$, these bound faces of $H$. The outer cycle of the unique maximal 2-connected subgraph of $H_r$ is the \emph{boundary cycle}  of $H_r$.

Any subdivision $H = T H_r$ of $H_r$ will be called an \emph{$r$-wall}, or a \emph{wall of size $r$}. The \emph{bricks} and the \emph{boundary cycle} of $H$ are its subgraphs that form subdivisions of the bricks and the boundary cycle of~$H_r$, respectively. An embedding of $H$ in a surface~$\Sigma$ is a \emph{flat} embedding, and $H$ is \emph{flat} in~$\Sigma$, if the boundary cycle $C$ of $H$ bounds an open disc $D(H)$ in $\Sigma$ such that all its bricks $B_i$ bound disjoint, open discs $D(B_i)$ in $\Sigma$ with $D(B_i)\subseteq D(H)$ for all $i$.

For topological concepts used but not defined in this paper we refer to~\cite[Appendix~B]{diestel}. When we speak of the {\em genus\/} of a surface $\Sigma$ we always mean its Euler genus, the number $2-\chi(\Sigma)$.

A closed curve $C$ in $\Sigma$ is \emph{genus-reducing} if the (one or two) surfaces obtained by `capping the holes' of the components of $\Sigma\setminus C$ have smaller genus than $\Sigma$. Note that if $C$ separates $\Sigma$ and one of the two resulting surfaces is homeomorphic to $S^2$, the other is homeomorphic to~$\Sigma$. Hence in this case $C$ was not genus-reducing.

The \emph{representativity} of an embedding $G\hookrightarrow\Sigma\not\simeq S^2$ is the smallest integer $k$ such that every genus-reducing curve $C$ in $\Sigma$ that meets $G$ only in vertices meets it in at least $k$ vertices. We remark that, by \cite[Lemmas~B.5 and~B.6]{diestel}, all faces of an embedded graph are discs if the representativity of the embedding is positive.

An $(\alpha_0, \alpha_1, \alpha_2)$-near embedding $(\sigma,G_0,A,\V,\W)$ of a graph $G$ in some surface $\Sigma$ is {\it ($\beta$, r)-rich} for integers $3\leq \beta \leq r$ if the following statements hold:
\begin{enumerate}[(i)]
	\item  \label{prop:bigwall} $G'_0$ contains a flat $r$-wall $H$.
	\item If $\Sigma \not \hom S^2$, the representativity of $G'_0$ in $\Sigma$ is at least $\beta$.
	\item For every vortex $V\in \V$ there are $\beta$ concentric cycles $C_1(V),\ldots,C_{\beta}(V)$ in $G'_0$ tightly enclosing $V$ and bounding open discs $D_1(V) \supseteq \ldots \supseteq D_\beta(V)$, such that $D_\beta(V)$ contains $\Omega(V)$ and $\closure{D(H)}$ does not meet $\closure{D_1(V)}$. 
	For distinct, large vortices ${V,W\in\V}$, the discs $\closure{D_1(V)}$ and $\closure{D_1(W)}$ are disjoint. In particular, every two vortices in $\V$ have distance greater than $\beta$ in $\Sigma$.
	\item \label{prop:linked} Let $V \in \V$ with $\Omega(V)=(w_1,\ldots,w_n)$. Then there is a linked decomposition of $V$ of adhesion at most $\alpha_2$ and a path $P$ in $V\cup \bigcup \W$ with $V(P\cap G_0)=\Omega(V)$ that avoids all the paths of the linkage of~$V$, and traverses $w_1,\ldots,w_n$ in this order.
	\item \label{prop:linkagetogrid} For every vortex $V\in\V$, its set of society vertices $\Omega(V)$ is linked in $G'_0$ to branch vertices of $H$ by a set $\P(V)$ of $\beta$ disjoint paths having no inner vertices in $H$.
	\item \label{prop:orthogonalpaths} For every vortex $V\in\V$, the paths in $\P(V)$ intersect the cycles $C_1(V),\ldots,C_\beta(V)$ orthogonally.
	\item All vortices in $\W$ are properly attached.
\end{enumerate}\bigbreak

Using this terminology, we can now state the main result of our paper:
\begin{thm}\label{thm:richthm}
For every non-planar graph~$R$ and integers~$3\leq \beta\leq r$ there exist integers $\alpha_0 = \alpha_0(R,\beta)$, $\alpha_1 = \alpha_1(R)$  and $w = w(\alpha_0, R,\beta, r)$ such that the following holds with $\alpha=(\alpha_0,\alpha_1,\alpha_1)$.
Every graph~$G$ of tree-width~$tw(G)\geq w$ that does not contain $R$ as a minor has an $\alpha$-near, $(\beta, r)$-rich embedding in some surface~$\Sigma$ in which $R$ cannot be embedded.
\end{thm}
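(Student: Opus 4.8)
The strategy is to start from the near-embedding provided by Theorem~\ref{thm:3.1} and then, by a sequence of controlled modifications (each costing a bounded number of vertices, all added to the apex set), upgrade it to a $(\beta,r)$-rich one. First I would fix the non-planar graph $R$ and the parameters $3\le\beta\le r$, and choose $w=w(\alpha_0,R,\beta,r)$ large enough. Since $\tw(G)\ge w$, the grid theorem gives a large grid minor, hence a large wall, and this wall (together with the associated large-order separations) generates a tangle $\T$ in $G$ of order at least the bound $\theta$ from Theorem~\ref{thm:3.1}; the order of $\T$, and the size of the wall, can be made as large as we like by taking $w$ large. Theorem~\ref{thm:3.1} then hands us an $\alpha$-near embedding $(\sigma,G_0,A,\V,\W)$ of $G$ in a surface $\Sigma$ in which $R$ cannot be drawn, capturing $\T-A$. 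All seven richness conditions now have to be extracted from this raw near-embedding.

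The heart of the argument is to locate a large flat wall $H$ inside $G'_0$, far from all the vortices, and to set up the concentric cycles and linkages around each large vortex. Because $\T-A$ is captured, the tangle `points into' the embedded part $G_0$ rather than into any single vortex, so the large wall forced by the tree-width survives (after deleting a bounded apex set to clean up) as a subwall drawn flatly in $\Sigma$; this gives~(\ref{prop:bigwall}), and a standard argument about genus-reducing curves meeting few vertices — again paid for by moving a bounded number of vertices to $A$ — yields the representativity bound~(ii). For each large vortex $V$, I would use the fact that its society $\Omega(V)$ lies on the boundary of the accommodating disc $D(V)$ and is linked through $G_0$ to the wall (by Menger, using the tangle to prevent small separators) to produce $\beta$ disjoint $\Omega(V)$--$H$ paths; concentric cycles $C_1(V),\dots,C_\beta(V)$ tightly enclosing $V$ are then carved out of $G'_0$ near $\partial D(V)$, rerouting where necessary so that the paths $\P(V)$ cross them orthogonally — this is (iii), (v), (vi). The tightness condition and the disjointness of the discs $\closure{D_1(V)}$ for distinct vortices, together with the separation of $\closure{D(H)}$ from all of them, require the wall and vortices to be pairwise far apart, which is arranged by discarding all but boundedly many vortices into $A$ and by choosing the wall deep inside a region of $G_0$ avoided by the discs — the machinery of Sections~\ref{sec:modifying} and~\ref{sec:pathsystems} is invoked here.

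The remaining conditions are more local. Property~(\ref{prop:linked}) — the existence of a linked decomposition of adhesion at most $\alpha_2$ together with a spine path $P$ through the societies avoiding the linkage — is the new vortex-analysis technique advertised in the introduction: starting from an arbitrary decomposition of $V$ of adhesion $\le\alpha_2$, one finds internally disjoint paths realizing the adhesion sets (shrinking the decomposition and enlarging $A$ as needed), and then routes $P$ through $V\cup\bigcup\W$ hitting $G_0$ exactly in $\Omega(V)$ in order; the lemmas of Section~\ref{sec:pathsystems} do the work. Property~(vii), that every small vortex is properly attached, is obtained by deleting from $G$ (into $A$) any small vortex failing the two path conditions — since there are boundedly many small vortices and the failures are `small', this costs only a bounded number of apex vertices — or by absorbing the offending vertices into $G_0$ as virtual edges. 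Finally one checks that all the bounded quantities removed to $A$ over the whole process sum to something bounded in terms of $R,\beta$ (giving $\alpha_0(R,\beta)$), that $\alpha_1$ depends only on $R$, and that $\alpha_2$ can be taken equal to $\alpha_1$, matching the statement's choice $\alpha=(\alpha_0,\alpha_1,\alpha_1)$.

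**Main obstacle.** The delicate point is the simultaneous control of the vortices: making the $\beta$ concentric cycles around each large vortex genuinely \emph{tight}, keeping the enclosing discs of different vortices (and of the wall) disjoint, and at the same time routing the linkage paths $\P(V)$ orthogonally through those cycles and the spine path $P$ disjointly from the linkage — all while the number of apex vertices sacrificed stays bounded by a function of $R$ and $\beta$ only. Coordinating these competing requirements, rather than any single one of them, is where the real work lies, and is precisely what the preparatory Sections~\ref{sec:modifying}--\ref{sec:pathsystems} are designed to handle.
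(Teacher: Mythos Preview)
Your plan has a genuine gap at the very first step: you start directly from Theorem~\ref{thm:3.1}, which gives an $\alpha$-near embedding whose large vortices have bounded \emph{adhesion} but possibly unbounded \emph{depth} (width). You then assert that, because the tangle $\T-A$ is captured, ``the large wall forced by the tree-width survives \ldots\ as a subwall drawn flatly in~$\Sigma$''. This does not follow. Capture only says that no large side $B'$ of a low-order separation lies entirely inside a single vortex; it does not force any specific subgraph of $G$---in particular the wall you obtained from the grid theorem---to lie in~$G_0$. With vortices of unbounded depth, a large vortex can itself have arbitrarily large tree-width, and the wall (or most of it) can perfectly well live there, spread over many parts of the vortex's path-decomposition, each of which is individually small in the tangle. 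The paper handles this by first deriving Theorem~\ref{thm:extended1.3} from Theorem~\ref{thm:3.1}: that theorem provides a tree-decomposition whose torsos have near-embeddings with vortices of bounded \emph{depth}. One then picks a torso of large tree-width (Lemma~\ref{lem:torsotwlarge}) and applies Lemma~\ref{lem:G0haslargetw}, which explicitly needs the bounded-depth hypothesis, to conclude that $G_0$ itself has large tree-width and hence contains the large flat wall. Without this detour your property~(\ref{prop:bigwall}) is unproved, and everything downstream (the concentric cycles, the linkages to~$H$) collapses.

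A second, structural, omission is the mechanism by which the ``competing requirements'' you rightly identify as the main obstacle are actually reconciled. The paper does not simply invoke the lemmas of Sections~\ref{sec:modifying}--\ref{sec:pathsystems}; it sets up a family of admissible near-embeddings satisfying explicit inequalities~($\star$), ($\star\star$) and then chooses one minimizing $(g(\Sigma),|\V|)$ lexicographically. Each failure of (P1)--(P4) is then turned, via Lemmas~\ref{lem:mergevortices}, \ref{lem:rephigh}, \ref{lem:pieslices}, into a near-embedding with smaller genus or fewer large vortices, contradicting minimality. This extremal argument is what keeps the apex growth bounded while simultaneously achieving high representativity, disjoint enclosing discs, and a wall far from all vortices; your proposal gives no such device. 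Finally, two smaller inaccuracies: the number of small vortices in $\W$ is not bounded (only $|\V|$ is), so you cannot ``delete any small vortex failing the path conditions'' into~$A$; and large vortices cannot be ``discarded into~$A$'' either, since they may have unbounded size---they are merged (Lemma~\ref{lem:mergevortices}) or absorbed via Lemma~\ref{lem:pieslices}, not deleted.
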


For our proof of Theorem~\ref{thm:richthm} we shall use Theorem~\ref{thm:3.1}, but not directly. Instead, we use Theorem~\ref{thm:3.1} in the next section to prove Theorem~\ref{thm:extended1.3}, stated below, which is a strengthening of  Theorem~(1.3) of~\cite{GM16}. Our proof of Theorem~\ref{thm:richthm} will then be based on Theorem~\ref{thm:extended1.3}.


\section{Finding a tree-decomposition}
\label{sec:findingtd}


The following lemma shows that we can slightly modify a given $\alpha$-near embedding by embedding some more vertices of the graph in the surface, so that all the small vortices are properly attached to $G_0$. 

\begin{lem}\label{lem:properlyattached}
Given an integer $\alpha$ and  an $\alpha$-near embedding $(\sigma,G_0,A,\V,\W)$ of a graph $G$ in a surface $\Sigma$, there exists an $\alpha$-near embedding $(\hat\sigma,\hat G_0,A,\V,\hat\W)$ of $G$ in $\Sigma$ such that $G_0\subseteq \hat G_0$ and $\hat\sigma|_{G_0}=\sigma$, each vortex in $\hat\W$ is properly attached to $\hat G_0$, and consecutive society vertices of vortices $V\in\V$ are never adjacent in $V$.
\end{lem}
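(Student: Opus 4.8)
The plan is to proceed in two stages. First I would deal with the small vortices in $\W$, modifying the embedding so that each becomes properly attached; then I would handle the (harmless-looking, but technically separate) requirement that consecutive society vertices of large vortices in $\V$ are non-adjacent, which turns out to be a consequence of the same kind of manipulation applied to the discs $D(V)$ for $V\in\V$.

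For the small vortices: fix a small vortex $V=(G_i,\Omega_i)\in\W$, so $|\Omega_i|\le 3$, accommodated by the disc $D_i=D(V_i)$. Proper attachment asks for certain $\Omega_i$-paths in $G_i$: for every pair $u,v\in\Omega_i$ an $\Omega_i$-path from $u$ to $v$, and for every triple $u,v,w$ two internally disjoint $\Omega_i$-paths linking $u$ to $v$ and $v$ to $w$. The key observation is that by Menger's theorem, the failure of these conditions exposes a small separator inside $G_i$: either some vertex $u\in\Omega_i$ is separated in $G_i$ from the rest of $\Omega_i$ (or from another vertex of $\Omega_i$) by a single vertex or no vertex at all, or for some triple the required two-path configuration is blocked by at most one vertex. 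In every such case there is a vertex set $S$ of size at most one whose removal lets us split off, inside the disc $D_i$, a part of $G_i$ attached to $G_0\cup S$ along at most $|\Omega_i|-1\le 2$ vertices together with $S$ — i.e.\ along at most $3$ vertices. I would then re-embed that part: move the split-off subgraph into the surface as (part of) the new $\hat G_0$, replacing $V$ by a shorter vortex (or several shorter vortices) of length at most $3$, placed in subdiscs of $D_i$. Crucially this process does not change $A$, does not touch $G_0$ (so $G_0\subseteq\hat G_0$ and $\hat\sigma|_{G_0}=\sigma$), does not increase $\alpha_0$ or $\alpha_1$ (we create no new large vortices and add no apex vertices; the number of small vortices $n-\alpha'$ may grow, but small vortices are not counted by $\alpha$), and leaves $\V$ untouched. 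Since each step strictly decreases $\sum_{W\in\W}|V(W)|$ (we push at least one vertex from a vortex into the surface), the process terminates, and at termination every small vortex is properly attached.

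For the non-adjacency of consecutive society vertices in a large vortex $V=(G_j,\Omega_j)\in\V$ with $\Omega_j=(w_1,\dots,w_n)$: if $w_k w_{k+1}\in E(G_j)$ for some $1\le k<n$, this edge lies in $G_j$ but its two ends lie on $\boundary D(V_j)=\sigma(\Omega_j)$ in cyclic order. I would simply re-route this edge as a new edge of $\hat G_0$ drawn inside $D(V_j)$ close to the boundary arc between $\sigma(w_k)$ and $\sigma(w_{k+1})$; removing the edge from $G_j$ and adding it to $G_0$ gives a valid near-embedding with the same surface, same $A$, same $\V$ (the vertex sets $\Omega_j$ are unchanged), and $G_0$ still a subgraph of $\hat G_0$. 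One must check that deleting such edges does not destroy the adhesion bound $\alpha_2$ on the large vortices and does not interfere with the small-vortex argument above; this is routine, since deleting an edge of $G_j$ only helps the adhesion (a decomposition of the old $G_j$ is still a decomposition of the new one), and these edge deletions commute with the vertex moves of the first stage. Finally I would remark that, as observed already in the excerpt, applying $W\mapsto W-v$ preserves proper attachment, which is the compatibility statement one needs if the two stages are interleaved.

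The main obstacle is the bookkeeping in the first stage: when a small vortex fails proper attachment, one has to argue that the obstructing separator really has size at most one, and carefully specify which subgraph of $G_i$ gets absorbed into $\hat G_0$ versus which stays behind as a new small vortex, so that (a) the new pieces still have length at most $3$, (b) their society vertices still sit on the boundaries of disjoint subdiscs of the old $D_i$ in the correct cyclic order, matching property (iv) of the near-embedding definition, and (c) the process visibly terminates. Making the case analysis for $|\Omega_i|=2$ and $|\Omega_i|=3$ explicit — and checking that after absorbing one piece the remaining vortex genuinely has strictly fewer vertices — is where the real work lies; everything else (no new apices, $\alpha_1$ unchanged, $G_0$ preserved) is immediate from the construction.
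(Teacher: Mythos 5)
Your overall strategy is the same as the paper's: iteratively split small vortices along separators of size at most one (exposed, via Menger/connectivity, by any failure of proper attachment), embed the separating vertex in the accommodating disc, let the pieces become new small vortices of length at most $3$, and finally re-route any edge of a large vortex between consecutive society vertices into its disc $D(V)$. However, two points in your write-up are genuinely problematic. First, you say you would ``move the split-off subgraph into the surface as (part of) the new $\hat G_0$''. That is not possible in general: the split-off part of $G_i$ is an arbitrary graph and need not be embeddable in the subdisc of $D_i$ with its attachment vertices on the boundary. Only the separator vertex itself (and, in the degenerate cases, a single vertex that is the unique neighbour of a society vertex, or a single edge between two society vertices) can be transferred into $\hat G_0$; everything else must remain inside vortices. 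Your parenthetical ``replacing $V$ by \dots several shorter vortices of length at most $3$, placed in subdiscs'' is the correct move, but it contradicts the sentence before it, and the distinction matters.

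Second, your termination argument fails. You claim each step strictly decreases $\sum_{W\in\W}|V(W)|$, but splitting a length-$3$ vortex at a cut vertex $z$ into pieces with societies $\{z,w\}$ and $\{z,w',w''\}$ leaves $z$ as a society vertex of \emph{both} new vortices, so the sum does not decrease (it can increase by one); the same happens when the separator has size $0$ or lies in $\Omega_i$, where no vertex is embedded at all. The paper's proof avoids this by using a different potential: modifications that embed a vertex or an edge strictly increase the number of embedded vertices plus edges of $G$ (a bounded quantity), while the purely combinatorial splitting step strictly decreases the number of small vortices violating the normalization ``$G_i-\Omega_i$ is connected and receives an edge from every society vertex''. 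That preprocessing normalization is also what makes the Menger analysis for the two-internally-disjoint-paths condition go through (it guarantees the blocking vertex $z$ is not the shared endpoint $v$), a detail you defer to ``bookkeeping'' but which is where the separator analysis actually lives. With the potential corrected and the normalization made explicit, your argument becomes the paper's proof.
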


\begin{proof}
Let us consider the following modifications of our  near-embedding, each resulting in another $\alpha$-near embedding.

\begin{enumerate}
	\item[(1)] 
		By embedding edges between society vertices of a small vortex $V$ in $D(V)$,
		we may assume that no vortex in $\W$ contains an edge between
		two of its society vertices.
	\item[(2)]
		By first performing (1) and then splitting a small vortex $(G_i,\Omega_i)$ into several small vortices 
		each consisting of a component of $G_i-\Omega_i$ together with only their neighbours in $\Omega_i$ 
		as society vertices, we may assume that every $(G_i,\Omega_i)\in\W$ satisfies
		\begin{equation}
			\begin{minipage}[c]{0.8\textwidth}\em
				The graph $G_i-\Omega_i$ is connected and receives an edge from every vertex in~$\Omega_i$.
			\end{minipage}\tag{$\star$}\ignorespacesafterend 
		\end{equation}
	\item[(3)]
		If a society vertex $w$ of a small vortex $V=(G_i,\Omega_i)$ has only one neighbour $v$ in $G_i$, 
		we can embed $v$ and all $v-\Omega_i$ edges in $D(V)$ and replace $w$ with $v$ in $\Omega_i$. 
		Thus, we may assume that for every vortex $(G_i, \Omega_i)\in\W$ every society vertex has at least two
		neighbours in $G_i-\Omega_i$.
	\item[(4)] 
		Let $V:=(G_i,\Omega_i)\in \W$ be a vortex of length 3. 
		If there is a vertex $z\in V(G_i)\setminus\Omega_i$, 
		that separates one society vertex $w\in\Omega_i$ from the other two society vertices 
		$w',w''$, we can write $(G_i,\Omega_i)$ as the union of 
		two small vortices $V^1:=(G_i^1,\{z,w\})$ and $V^2:=(G_i^2,\{z,w',w''\})$. 
		Let $G_0^+$ denote the graph we obtain from $G_0$ by adding $z$ to its vertex set and
		extending $\sigma$ to an embedding $\sigma^+$ of $G_0^+$ in $\Sigma$ by mapping $z$ to a point in $D(V)$.
		It is easy to see that  $(\sigma^+,G_0^+, A, \V,(\W\setminus\{V\})\cup\{V^1,V^2\})$ 
		is an $\alpha$-near embedding of $G$ in $\Sigma$.
\end{enumerate}
 
We can iterate these two modifications only finitely often: Every application of (1), (3) or (4) 
increases either the number of embedded vertices or the number of embedded edges of the graph~$G$ 
while an application of (2) reduces the number of small vortices not satisfying $(\star)$.

Let $(\hat\sigma,\hat G_0,A,\V,\hat\W)$ be the $\alpha$-near embedding obtained by applying 
the two modifications as often as possible. Then, for every vortex $(G_i,\Omega_i)\in\W$ 
every $w\in\Omega_i$ has at least two neighbours in $G_i-\Omega_i$, which is connected. 
In particular every two vertices in $\Omega_i$ are linked by an $\Omega_i$-path in $G_i$.

Suppose now that $\Omega_i=\{u,v,w\}$, and let us find paths $P=u\ldots v$ and $Q=v\ldots w$ in $G_i$ that meet only in $v$.
Let $v',v''$ be distinct neighbours of $v$ in $G_i-\Omega_i$. We can find $P$ and $Q$ as desired 
unless the sets $\{v',v''\}$ and $\{u,w\}$ are separated in $G_i$ by one vertex $z$. 
Then $z\neq v$, since $G_i-\Omega_i$ is connected and $u,w$ send edges there.
So $z$ also separates $v$ from $\{u, w\}$ in $G_i$, contrary to $(4)$.

Thus, $(G_i, \Omega_i)$ is properly attached. Clearly, $G_0\subseteq \hat G_0$ and $\hat \sigma |_{G_0} = \sigma$. Embedding any vortex edges between adjacent society vertices of large vortices $V$ in the surface instead, we may assume that $V$ contains no such edges, as desired.
\end{proof}

Given two graphs $G$ and $H$, we say that $H$ is \emph{properly attached to $G$} if the vortex $(H,V(H)\cap V(G))$ is properly attached to $G$.


\begin{thm}\label{thm:extended1.3}
For every non-planar graph $R$ and for every integer $m$ there exists an integer $\alpha$ such that for every graph $G$ that does not contain $R$ as a minor and every $Z\subseteq V(G)$ with $|Z|\leq m$ there exist a tree-decomposition $(V_t)_{t\in T}$ of $G$ and a choice $r\in V(T)$ of a \emph{root} of $T$ such that, for every $t\in T$, there is a surface $\Sigma_t$ in which $R$ cannot be embedded,  and the torso $G_t$ of $V_t$ has an $\alpha$-near embedding $(\sigma_t,G_{t0},A_t,\V_t,\emptyset)$ in $\Sigma_t$ with the following properties:
\begin{enumerate}[\rm (i)]
	\item 
		The vortices $V\in\V_t$ have decompositions of width at most $\alpha$ satisfying {\rm (ii)} below.
	\item 
		For every $t'\in T$ with $tt'\in E(T)$ and $t\in rTt'$ the overlap $V_t\cap V_{t'}$ is contained in $A_{t'}$, 
		and $(V_t\cap V_{t'})\setminus A_t$ is contained either in a part $X_{tt'}$ of a vortex decomposition from {\rm (i)} 
		or in a subset $X_{tt'}$ of $V(G_{t0})$ that spans in $G_{t0}$ either a $K_1$ or a $K_2$ 
		or a $K_3$ bounding a face of $G_{t0}$ in $\Sigma_t$.
		In the latter case, $G_{t'}-A_t$ is properly attached to $G_{t0}$.
	\item
		If $t=r$, then $Z\subseteq A_r$. We say that the part $V_r$ 
		(with the chosen near-embedding of $G_r$) \emph{accomodates} $Z$.
\end{enumerate}
\end{thm}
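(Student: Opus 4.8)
The plan is to prove Theorem~\ref{thm:extended1.3} by induction on $|V(G)|$, using Theorem~\ref{thm:3.1} to supply, at each recursive step where the tree-width is large, the near-embedding from which the current root part is built, and using Lemma~\ref{lem:properlyattached} to tidy up its small vortices. Fix $R$ and~$m$; let $\theta=\theta(R)$ and $(a_0,a_1,a_2)$ be the constants of Theorem~\ref{thm:3.1}, and let $w_0=w_0(R)$ be large enough that a graph with no tangle of order~$\theta$ has tree-width less than~$w_0$ (this follows from the grid theorem, since a large enough grid minor -- forced by large tree-width -- carries a tangle of order~$\theta$). The induction splits into two cases. If $G$ has no tangle of order~$\theta$, then $\tw(G)<w_0$: take a tree-decomposition of width $<w_0$, add $Z$ to every bag, root it arbitrarily; every torso then has fewer than $w_0+m$ vertices, hence is trivially nearly embedded in~$S^2$ (in which $R$ does not embed, being non-planar) with apex set $A_t:=V(G_t)$ and $\V_t=\emptyset$, and conditions~(i)--(iii) hold trivially.

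So assume $G$ has a tangle $\T$ of order $\ge\theta$, and apply Theorem~\ref{thm:3.1} to get an $(a_0,a_1,a_2)$-near embedding $(\sigma,G_0,A,\V,\W)$ of $G$ in a surface $\Sigma$ admitting no embedding of~$R$, capturing $\T-A$; then enlarge the apex set to $A\cup Z$ (legitimate: deleting $Z$ from each vortex does not raise its adhesion, and the enlarged near-embedding still captures $\T-(A\cup Z)$), and apply Lemma~\ref{lem:properlyattached} so that every small vortex is properly attached and no large vortex has an edge between consecutive society vertices. Drawing in the virtual edges we pass to $G'_0$, in which each small vortex's society is a clique of order $\le3$ bounding a face. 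I would take as root part $V_r$ the union of $A\cup Z$, the vertex set of~$G'_0$, and, for each large vortex $V=(G_i,\Omega_i)$ with a fixed decomposition $(X_1,\dots,X_n)$ of adhesion $\le a_2$, its \emph{spine} $\Omega_i\cup\bigcup_j\bigl((X_{j-1}\cap X_j)\cup(X_j\cap X_{j+1})\bigr)$ (with $X_0=X_{n+1}=\emptyset$); the near-embedding of $G_r$ is the evident one, with apex $A\cup Z$, surface graph $G'_0$, and the vortices of $\V_r$ obtained from the $G_i$ by keeping only the spines (the torso adds clique-fill edges on the adhesion sets). Off $V_r$ I would hang, for each large vortex and each $j$, a subtree holding the \emph{interior} of the bag $X_j$, which is separated from the rest of $G$ by the $\le 2a_2+1$ vertices $S_j:=(X_{j-1}\cap X_j)\cup(X_j\cap X_{j+1})\cup\{w_j\}$; and, for each small vortex $(G_i,\Omega_i)$, a subtree holding $V(G_i)$, separated from the rest by~$\Omega_i$. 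These subtrees are obtained by induction applied to the ($R$-minor-free) subgraphs $G[X_j\cup A\cup Z]$, respectively $G[V(G_i)\cup A\cup Z]$, accommodating the corresponding separator, and then grafted onto~$r$. The recursion is well founded because, by the capture clause of Theorem~\ref{thm:3.1}, no vortex contains all of $V(G-A)$, so these subgraphs are proper and $|V(\cdot)|$ drops.

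What then remains is to verify conditions~(i)--(iii). Condition~(iii) is immediate since $Z\subseteq A\cup Z=A_r$. For~(i) one checks that replacing each $X_j$ by $S_j$ yields an honest vortex decomposition of the spine-vortex left in $V_r$: any vertex of $X_j$ outside $S_j$ lies in no other bag of $(X_1,\dots,X_n)$, so all its edges of $G_i$ are captured inside the grafted subtree and the surviving spine, together with the torso's clique-fill edges on the $S_j$, is covered by $(S_1,\dots,S_n)$, a decomposition of width $\le 2a_2$ and adhesion $\le a_2$ with $w_j\in S_j$. For~(ii), the overlap of $V_r$ with the child holding $X_j$ is exactly $S_j\cup A\cup Z$, whose part outside $A_r$ is the bag $S_j$ of that decomposition and which, by the choice of accommodated set, is contained in the child's root apex; the overlap with the child holding the small vortex $(G_i,\Omega_i)$ is $\Omega_i\cup A\cup Z$, whose part outside $A_r$ lies in $\Omega_i$, a clique bounding a face of $G'_0$, and the corresponding torso minus $A_r$ is properly attached to $G_{r0}$ because $(G_i,\Omega_i)$ was and proper attachment is inherited by $G'_0$, survives deletion of the apex vertices, and is only helped by adding virtual edges.

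I expect the main obstacle to be not this bookkeeping but the control of the near-embedding parameters -- above all the apex sizes -- as the recursion unfolds. The naive recursion above threads the ambient apex set into every vortex-bag child (it must, to cover the edges from a vortex interior to the apex), and likewise threads each adhesion-set separator down into every descendant, so the apex sizes would grow by a constant amount per recursive level and hence, over a recursion of unbounded depth, fail to be bounded by a constant depending only on~$R$ and~$m$. Making these parameters stabilise is the real content of the proof: one must organise the recursion so that only a bounded amount of ``ambient'' data is ever carried down, and it is convenient to take the top-level set $Z$ out of the recursion altogether, handling it by a single post-processing step that adds each $z\in Z$ to the bag and apex of every part on the path from the subtree containing~$z$ to the root (this changes no $G_t-A_t$, hence preserves every near-embedding and all of~(i)--(iii), while inflating apex sizes by at most~$m$). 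Once the remaining parameters are seen to stabilise, the constant $\alpha=\alpha(R,m)$ can be fixed in advance. Apart from Theorem~\ref{thm:3.1} and Lemma~\ref{lem:properlyattached}, and the capture clause that drives termination, everything is careful accounting.
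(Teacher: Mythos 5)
Your skeleton---root part built from $G_0'$ plus the spines of the large vortices, children hanging off the bag interiors and the small vortices, induction on $|G|$---is exactly the paper's, and your verification of (i)--(iii) for that skeleton is essentially right. But you have correctly diagnosed, and then not repaired, the one gap that constitutes the actual content of the proof: the recursion as you set it up passes down to each child the set $S_j\cup A\cup Z$, of size up to $2a_2+1+a_0+m>m$, so the inductive hypothesis with parameter $m$ does not apply, and the apex sizes grow without bound along the recursion. Your proposed remedy (pulling the top-level $Z$ out and reinserting it by post-processing) does not touch the real source of the growth, which is the ambient apex set $A$ and the adhesion separators $S_j$ that each child must accommodate, and whose own Theorem~\ref{thm:3.1} apex sets must in turn be accommodated by the grandchildren.

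The missing idea is to make the tangle fed to Theorem~\ref{thm:3.1} depend on $Z$ itself. One first pads $Z$ to size exactly $m=3\theta-2$ with $\theta:=(m+2)/3\ge\max(\hat\theta,3\hat\alpha+3)$, and reduces to the case that no separation of order at most $\theta$ has at least $|A\cap B|$ vertices of $Z$ on each side (if one exists, split the graph there, recurse on both halves with $Z_A$ and $Z_B$ of size still at most $m$, and join a new root part $Z\cup(A\cap B)$ of size at most $4\theta-2$). After this reduction, declaring $(A,B)\in\T$ whenever $|Z\cap B|>|Z\cap A|$ yields a tangle of order $\theta$, and one gets $|Z\cap A|<|A\cap B|$ for every $(A,B)\in\T$. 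Now the capture clause of Theorem~\ref{thm:3.1} does the work you left undone: each small vortex and each vortex bag $\hat X_j$, together with $\hat A$, forms the small side of a separation in $\T$ of order at most $3\hat\alpha\le\theta$, so it contains fewer than $\theta$ vertices of $Z$; hence the set $Z':=\hat A\cup S_j\cup(Z\cap\hat X_j)$ passed to the child has size at most $2\theta-1\le m$, and the recursion closes with $\alpha=4\theta-2$ depending only on $R$ and $m$. Without tying the tangle to $Z$ in this way, there is no bound on how much of the accumulated apex material lands inside a single vortex bag, and the accounting you defer cannot be made to work.
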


We remark that the statement about $Z$ in Theorem~\ref{thm:extended1.3} only serves a technical purpose, to facilitate induction. 
The main difference between Theorem~\ref{thm:extended1.3} and Theorem~\ref{thm:3.1} is that the vortex decompositions required in 
Theorem~\ref{thm:extended1.3} have bounded width, while those in Theorem~\ref{thm:3.1} are only required to have bounded adhesion. 
It is this difference that requires the extra work when we deduce Theorem~\ref{thm:extended1.3} from Theorem~\ref{thm:3.1}: 
Starting from the $\alpha$-near embedding of $G$ provided by Theorem~\ref{thm:3.1}, we have to split off small vortices of large width, 
and large parts to decompose those parts of $G$ inductively.

\begin{proof}[Proof of Theorem \ref{thm:extended1.3}]
Applying Theorem \ref{thm:3.1} with the given graph $R$ yields two constants~$\hat\alpha$ and~$\hat \theta$. 
We may assume that $m$ is large enough that $\theta:=(m+2)/3\geq \max(\hat \theta, 3\hat\alpha+3)$ and $\theta$ is integral
and let $\alpha:=4\theta-2$.

The proof proceeds by induction on $|G|$, for these (now fixed) $R$, $m$ and $\alpha$. We may assume that $|Z|=m(=3\theta-2)$, since if it is smaller we can add arbitrary vertices  to $Z$.  (We may assume that such vertices exist, as the theorem is trivial for $|G|<\alpha$.)

We may assume that
\begin{equation}\label{claim:nosep}
\begin{minipage}[c]{0.8\textwidth}\em
There is no separation $(A,B)$ of $G$ of order at most $\theta$ such that both $|Z\cap A|$ and $|Z\cap B|$ are of size at least $|A\cap B|$.
\end{minipage}\ignorespacesafterend 
\end{equation}
Otherwise, let $Z_A:=(Z\setminus B)\cup (A\cap B)$. By assumption, $|A\cap B| \leq |Z\cap B|$, so $|Z_A| = |Z\setminus B| + |A \cap B| \leq |Z|=m$. 
We apply our theorem inductively to $G[A]$ and $Z_A$, which yields a tree-decomposition of $G[A]$ 
such that the torso of its root part $G_A$ has its apex set in a suitable near-embedding contain $Z_A$. 
Similarly, we apply the theorem to $G[B]$ and $Z_B:=(Z\setminus A)\cup (A\cap B)$.  
We combine these two tree-decompositions by joining a new part $Z\cup(A\cap B)$ to both $G_A$ and $G_B$ 
and obtain a tree-decomposition of $G$ with the desired properties of the theorem:  
The new part, which we make the root, contains at most $|Z|+|A\cap B|\leq 4\theta-2$ vertices, so 
all these can be put in the apex set of an $\alpha$-near embedding. 
Finally, the new part contains $Z$, and the new decomposition inherits all the remaining desired 
properties from the decomposition of $G[A]$ and $G[B]$. This proves (\ref{claim:nosep}).

Let $\T$\?{$\T$} be the set of separations $(A,B)$ of $G$ of order less than $\theta$ such that $|Z\cap B| > |Z\cap A|$.
Let us show that
\begin{equation}\label{claim:tistangle}
\begin{minipage}[c]{0.8\textwidth}\em
$\T$ is a tangle of $G$ of order $\theta$.
\end{minipage}\ignorespacesafterend 
\end{equation}
By (\ref{claim:nosep}) and our assumption that $|Z|=m=3\theta-2$, for every separation $(A,B)$ of $G$ of order less than $\theta$ 
exactly one of the sets $Z\cap B$ and $Z\cap A$ has size less than $\theta$.
This implies both conditions from the definition of a tangle.

From (\ref{claim:nosep}) and the definition of $\T$ we conclude
\begin{equation}\label{claim:smallcontainslittle}
\begin{minipage}[c]{0.8\textwidth}\em
$|Z\cap A|< |A\cap B|$ for every $(A,B)\in\T$.
\end{minipage}\ignorespacesafterend 
\end{equation}
As $\theta \geq \hat \theta$, Theorem~\ref{thm:3.1}  gives us an $\hat \alpha$-near embedding 
$(\sigma,G_0,\hat A,\hat \V,\hat \W)$\?{$\sigma,G_0,\hat A,\hat \V,\hat \W$} of $G$ in some surface $\Sigma$ that captures $\T$. 
Our plan now is to split $G$ at separators consisting of apex vertices, of society vertices of vortices in $\hat \W$, or 
single parts of vortex decompositions of vortices in~$\hat\V$. We shall retain intact a part of $G$ that contains $G_0$, 
and which we know how to embed $\alpha$-nearly; this part is going to be a part of a new tree-decomposition. 
For the subgraphs of $G$ that we split off we shall find tree-decompositions inductively, and eventually we shall combine all these 
tree-decompositions to one tree-decomposition of $G$ that satisfies our theorem. 

By Lemma~\ref{lem:properlyattached}, we may assume that large vortices contain no edges between consecutive society vertices, 
and that all small vortices are properly attached to~$G_0$. Let us consider such a vortex $(G_i,\Omega_i)\in \hat \W$.  
Since our embedding captures $\T$, the separation $(V(G_i)\cup \hat A, V(G\setminus (G_i\setminus \Omega_i))\cup \hat A)$, whose order 
is at most $3+|\hat A|< \theta$, lies in $\T$. By (\ref{claim:smallcontainslittle}), $G_i$ contains at most $2+|\hat A|$ vertices of $Z$. 
Thus, $Z_i':=\Omega_i\cup \hat A \cup (Z\cap G_i)$ has size at most $5+2\hat \alpha \leq m$.  
We apply our theorem inductively to the smaller graph $G[V(G_i)\cup \hat A]$ with $Z_i'$. 
Let \?{$H^i$}$H^i$ be the torso of the root part of the resulting tree-decomposition $(T^i,\Hc^i)$, \?{$T^i$, $\Hc^i$} the one that accomodates~$Z_i'$. 
Recall that $G_i$ was properly attached to $G_0$. The $\Omega_i$-paths witnessing this have no vertices in $\hat A$, and by replacing any $H^i$-subpaths 
they contain with torso-edges of $H^i$, we can turn them into paths witnessing that also $H^i-\hat A$ is properly attached to $G_0$. 

For every vortex $(G_i,\Omega_i)\in\hat\V$, with $\Omega_i=\{w_1^i,\ldots,w_{n(i)}^i\}$ say, let us choose a fixed decomposition $(\hat X_1^i,\ldots,\hat X_{n(i)}^i)$ of adhesion at most $\hat\alpha$. We define
\[
	X_j^i:=\begin{cases}
		\big(\hat X_1^i \cap \hat X_2^i\big) & \text{for } j = 1\\
		\big(\hat X_j^i \cap (\hat X_{j-1}^i \cup \hat X_{j+1}^i) \big) & \text{for }1< j < n(i)\\
		\big(\hat X_{n(i)}^i \cap \hat X_{n(i)-1}^i \big) & \text{for } j = n(i)
	\end{cases} 
\]
By \?{$G_i^-$}$G_i^-$ we denote the graph on $X_1^i\cup\ldots\cup X_{n(i)}^i$ in which every $X_j^i$ induces a complete graph but no 
further edges are present. 
Now, as the adhesion of $(G_i,\Omega_i)$ is at most $\hat \alpha$, every $X_j^i$ contains at most $2\hat \alpha$ vertices and thus, 
$(X_1^i,\ldots,X_{n(i)}^i)$ is a decomposition of the vortex\?{$V_i^-$} $V_i^- := (G_i^-,\Omega_i)$ of depth at most $2\hat \alpha\leq \alpha$. 
Let~$\V$\?{$\V$} denote the set of these new vortices.

For every $j=1,\ldots,n(i)$, the pair 
\[\big(\hat X_j^i\cup \hat A, (V(G) \setminus (\hat X_j^i\setminus X_j^i))\cup \hat A\big)\] 
is a separation of order at most $|X_j^i\cup \hat A|\leq 2\hat \alpha + \hat \alpha \leq \theta$. As before, our embedding captures $\T$, 
so the separation lies in $\T$. By (\ref{claim:smallcontainslittle}), at most $\theta-1$ vertices from $Z$ lie in $\hat X_j^i \cup \hat A$. 
Let $Z_{ij}':= X_j^i\cup \hat A \cup (Z\cap \hat X_j^i)$. This set contains at most $2\theta-1\leq m$ vertices and, as before, we can apply our 
theorem inductively to the smaller graph $G[\hat X_j^i \cup \hat A]$ with $Z_{ij}'$. 
We obtain a tree-decomposition $(T_j^i,\Hc_j^i)$\?{$T_j^i$} of this graph, with the root torso $H_j^i$\?{$H_j^i$} accomodating~$Z_{ij}'$.  

Now, with $V_0:=V(G_0)\cup \hat A$, we can write 
\[
G=G[V_0]\cup\big(\bigcup \W\big) \cup \big( \bigcup \{ G[\hat X_j^i] : V_i\in\V, 1\leq j \leq n(i)\}\big).
\]
Let us now combine our tree-decompositions of the vortices in $\W$ and the graphs $G[\hat X_j^i]$ to a tree-decomposition of $G$: 
We just add a new tree vertex $v_0$ representing $V_0$ to the union of all the trees $T^i$ and $T_j^i$, and add edges from $v_0$
to every vertex representing an $H^i$ or an $H_j^i$ we found in our proof.

We still have to check that the torso of the new part $V_0$ can be $\alpha$-nearly embedded as desired. But this is easy: 
Let $G'_0$\?{$G'_0$} be the graph obtained from $G_0$ by adding an edge $xy$ for every two nonadjacent vertices $x$ and $y$ 
that lie in a common vortex $V\in \W$. We can extend the embedding $\sigma:G_0\hookrightarrow \Sigma$ to an embedding 
$\sigma':G'_0\hookrightarrow\Sigma$ by mapping the new edges disjointly to the discs $D(V)$.
Then $G':=G'_0\cup \bigcup G_i^-$ is the torso of $V_0$ in our new tree-decomposition, and $(\sigma',G_0',\hat A\cup Z,\V,\emptyset)$ 
is an $\alpha$-near embedding of $G'$ in $\Sigma$ whose apex set contains~$Z$. \end{proof}

As noted, Theorem~(1.3) of~\cite{GM16} is a direct result from Theorem~\ref{thm:extended1.3}:

\begin{cor}\label{thm:1.3}
For every nonplanar graph $R$ there exists an integer $\alpha$ such that every graph with no $R$-minor 
has a tree-decomposition $(V_t)_{t\in T}$ such that for every $t\in T$ there is a surface $\Sigma_t$ 
in which $R$ cannot be embedded but in which the torso $G_t$ corresponding to $t$ has an $\alpha$-near embedding $(\sigma_t, G_{t0},A_t,\V_t,\emptyset)$.
\end{cor}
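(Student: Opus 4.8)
The plan is to derive this as an immediate consequence of Theorem~\ref{thm:extended1.3} by discarding the extra book-keeping that Theorem~\ref{thm:extended1.3} carries. First I would fix a nonplanar graph $R$ and apply Theorem~\ref{thm:extended1.3} with, say, $m:=0$; this yields an integer~$\alpha$ with the property stated there. Given any graph $G$ with no $R$-minor, take $Z:=\emptyset\subseteq V(G)$, so that $|Z|\le m$ holds trivially, and let $(V_t)_{t\in T}$, together with a root $r\in V(T)$ and the surfaces $\Sigma_t$ and near-embeddings $(\sigma_t,G_{t0},A_t,\V_t,\emptyset)$, be as provided by Theorem~\ref{thm:extended1.3}. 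Each torso $G_t$ then has an $\alpha$-near embedding in a surface $\Sigma_t$ in which $R$ cannot be embedded, which is exactly what the corollary asserts; we simply forget the root $r$, the auxiliary vortex decompositions of bounded width, the overlap condition~(ii), and the condition~(iii) about~$Z$.

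The only point needing a word is the quantifier on~$\alpha$: the corollary wants a single $\alpha$ depending on $R$ alone (not on any $m$), and this is obtained precisely because we have instantiated $m$ as a fixed constant depending on nothing. One should also note that the parameter $\alpha$ in Theorem~\ref{thm:extended1.3} plays the role of all three coordinates of the near-embedding triple as well as the bound on vortex-decomposition width, so in particular $(\sigma_t,G_{t0},A_t,\V_t,\emptyset)$ is an $\alpha$-near embedding in the sense of Section~\ref{sec:structure_thms} (with $\alpha=(\alpha,\alpha,\alpha)$, or any common upper bound), as required. No further argument is needed.

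There is essentially no obstacle here: the corollary is strictly weaker than Theorem~\ref{thm:extended1.3}, and the proof is a one-line specialization. The real content, of course, lies in Theorem~\ref{thm:extended1.3} itself (and behind it Theorem~\ref{thm:3.1}); the purpose of stating the corollary separately is only to record that our strengthened statement recovers Theorem~(1.3) of~\cite{GM16} verbatim.
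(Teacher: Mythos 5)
Your proposal is correct and matches the paper's intent exactly: the paper offers no separate argument, stating only that the corollary is ``a direct result'' of Theorem~\ref{thm:extended1.3}, and specializing to $m=0$, $Z=\emptyset$ and discarding the root, the width bound~(i), the overlap condition~(ii) and condition~(iii) is precisely that specialization. Your remarks on the quantifier over $\alpha$ and on the single-integer convention for $\alpha$-near embeddings are accurate and need no further justification.
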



\section{Graphs on Surfaces}
\label{sec:tools}


In this section, we collect results about graphs embedded in surfaces. Except for the last one, these results are not directly related to near-embeddings.
Our first tool is the {\em grid-theorem} from~\cite{GM5}; see~\cite{diestel} for a short proof.

\begin{thm}\label{thm:gridthm}
For every integer $k$ there exists an integer $f(k)$ such that every graph of tree-width at least $f(k)$ contains a wall of size at least $k$. 
\end{thm}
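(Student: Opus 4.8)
The plan is to reduce the claim to producing a large grid \emph{minor}, to extract a highly connected set of vertices from the large-tree-width hypothesis, and then to build the grid from that set by routing disjoint linkages; since I only need the mere existence of the function~$f$, never a reasonable bound, I can afford to lose huge factors at every step. The first reduction: an $r$-wall is a subdivision of the subcubic graph $H_r$, and for a subcubic target graph being a minor and being a topological minor coincide, so it suffices to exhibit $H_r$ as a \emph{minor} for a suitable $r = r(k)$. Walls and square grids are moreover minor-equivalent up to constant factors --- a wall of size $cr$ contains the $r\times r$ grid as a minor, and the $r\times r$ grid contains a wall of size roughly $r$ as a subgraph --- so it is enough to show that tree-width at least $f(k)$ forces an $r\times r$ grid minor for a suitably large $r = r(k)$.

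Next I would convert the hypothesis $\mathrm{tw}(G)\geq f(k)$ into a workable object. By the duality between tree-width and brambles (equivalently, by passing to a tangle of large order in the sense of Section~\ref{sec:structure_thms}), large tree-width yields a bramble of order more than $f(k)$. From such a bramble one extracts a large \emph{well-linked} (``externally highly connected'') set $X\subseteq V(G)$: a set with $|X|\geq t$ such that any two disjoint equicardinal subsets $A,B\subseteq X$ of size at most~$t$ are joined by $|A|$ disjoint $A$--$B$ paths in $G$. The standard argument takes $X$ inside a set on which the bramble concentrates and combines Menger's theorem with the bramble property to rule out small separations within $X$; choosing $f$ large makes $t$ as large as we please in terms of~$r$.

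The crux is then to build the $r\times r$ grid minor from the well-linked set $X$, which I would do by induction on the number of rows constructed so far. The idea is to split $X$ into many consecutive intervals, route one family of disjoint paths between groups of intervals to play the role of the ``rows'', route a transversal family of disjoint paths for the ``columns'', and then repair the crossings between the two families by rerouting along the well-linked structure, retaining a large clean grid while keeping enough of $X$ unused to continue. I expect this step to be the main obstacle: controlling how the horizontal and vertical linkages interfere, and certifying that a large grid survives the cleanup, is precisely the part that runs long in the original proof of Robertson and Seymour~\cite{GM5} and that the short proof in~\cite[Ch.~12]{diestel} streamlines without making trivial. It typically needs a carefully chosen induction hypothesis --- a grid together with disjoint attachment paths to the still-unused part of $X$ --- or a suitable potential-function argument to drive the construction.
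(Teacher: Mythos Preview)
The paper does not prove this theorem at all: it simply cites it from Robertson--Seymour~\cite{GM5} and refers to~\cite{diestel} for a short proof. So there is nothing in the paper to compare your argument against beyond the citation itself.

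Your sketch is a fair outline of the Diestel--Gorbunov--Jensen--Thomassen approach (indeed, \cite{DGJT} is in the bibliography and is what underlies the ``short proof'' in~\cite{diestel}): reduce walls to grids via the subcubic observation, extract an externally $t$-connected set from the bramble/tangle dual to large tree-width, and then build the grid by routing. You correctly flag that the last step is where all the work lies, and you do not actually carry it out; your description of ``rerouting along the well-linked structure'' and ``cleanup'' is a placeholder rather than an argument. So as a self-contained proof this is incomplete --- but since the paper itself is content to quote the result, your write-up already exceeds what the paper provides.
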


Every large enough wall embedded in a surface contains a large flat subwall:

\begin{lem}\label{lem:flatwall}
For all integers $k,g$ there is an integer $\ell =\ell(k,g)$ such that any wall of size $\ell$ embedded in a surface of genus at most~$g$ contains a flat wall of size~$k$.
\end{lem}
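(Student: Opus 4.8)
The plan is to locate inside the given wall $W$ of size~$\ell$ a long chain of nested cycles, to show that in $\Sigma$ all but $O(g)$ of the consecutive pairs among them cobound annuli containing the corresponding portion of~$W$, and then to read a flat $k$-wall off the longest such stretch. First, recall the concentric structure of a wall: $W$ contains nested subwalls $W=W^{1}\supseteq W^{2}\supseteq\dots\supseteq W^{p}$ with $p=\Theta(\ell)$, obtained by peeling off the outermost layer of bricks at each step (so $W^{i}$ has size $\ge\ell-2i$), whose boundary cycles $C_{i}=\partial W^{i}$ are pairwise disjoint; for $1\le i<p$ the part $A_{i}$ of $W$ lying strictly between $C_{i}$ and $C_{i+1}$ is an ``annular ladder'' containing at least three disjoint $C_{i}$--$C_{i+1}$ paths. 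Call an index $i\in\{1,\dots,p-1\}$ \emph{good} if $C_{i}$ and $C_{i+1}$ cobound a closed annulus $\mathrm{An}_{i}\subseteq\Sigma$ with $A_{i}\setminus(C_{i}\cup C_{i+1})\subseteq\mathrm{An}_{i}$.

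For a run $a\le i\le b$ of consecutive good indices, the annuli $\mathrm{An}_{a},\dots,\mathrm{An}_{b}$ meet in the single cycles $C_{a+1},\dots,C_{b}$, approaching each $C_{i+1}$ from opposite local sides (since $A_{i}$ and $A_{i+1}$ reach $C_{i+1}$ from opposite sides inside~$W$), so their union is again an annulus $\mathrm{An}$ with $\partial\mathrm{An}=C_{a}\cup C_{b+1}$ containing the annular wall $\bigcup_{i=a}^{b}A_{i}$. This annular wall winds exactly once around $\mathrm{An}$; deleting a contiguous block of $O(1)$ of its columns leaves a subwall $W^{*}$ of height $\ge b-a$ and width $\ge\ell-2a-O(1)$ that lies in a sub-disc $D^{*}$ of $\mathrm{An}$ whose boundary is precisely the boundary cycle of $W^{*}$. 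Since a wall embedded in a disc with its boundary cycle as the disc boundary is flat — its bricks must bound the bounded faces, a wall having essentially a unique planar embedding — $W^{*}$ is flat, hence contains a flat $k$-wall as soon as both its height and width are at least~$k$.

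The crux is that at most $c_{1}g$ of the indices $1,\dots,p-1$ are bad, for an absolute constant~$c_{1}$. I would prove this by cutting $\Sigma$ along $C_{1},\dots,C_{p}$ simultaneously and invoking the additivity of Euler genus under cuts along separating curves: a bad index must be charged either to a separating cut neither side of which becomes planar (equivalently, a piece of positive Euler genus between two consecutive cycles) or to a non-separating cut — each of which strictly decreases the total Euler genus — and the whole budget is only~$g$. The delicate point, which I expect to be the main obstacle, is that pairwise disjoint non-separating cycles need not be few (parallel copies on a torus are all non-separating), so the bookkeeping must recognise that such parallel cycles cobound annuli, i.e.\ are \emph{good}, and charge only genuinely new uses of genus. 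Making this precise, together with the ``cylinder lemma'' input that three disjoint $C_{i}$--$C_{i+1}$ paths force $A_{i}$ into $\mathrm{An}_{i}$ once $C_{i}$ and $C_{i+1}$ cobound an annulus, is the bulk of the work; both are classical in the theory of graphs on surfaces and can be quoted.

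Finally I assemble the pieces. Among the indices $i\le\frac{\ell}{2}-k-O(1)$ at most $c_{1}g$ are bad, so the good ones among them split into at most $c_{1}g+1$ runs of total length at least $\frac{\ell}{2}-k-O(1)-c_{1}g$; hence some such run $[a,b]$ has length at least $\bigl(\tfrac{\ell}{2}-k-O(1)-c_{1}g\bigr)/(c_{1}g+1)$, and since $a\le b\le\frac{\ell}{2}-k-O(1)$ the associated annular wall then has height $\ge k$ and width $\ge\ell-2a-O(1)\ge 2k$. Thus $W^{*}$ is at least a $k\times k$ wall and contains a flat $k$-wall, and everything goes through once $\frac{\ell}{2}-k-O(1)-c_{1}g\ge k(c_{1}g+1)$, so it suffices to take $\ell(k,g)$ of order $kg$, concretely $\ell=2k(c_{1}g+2)+O(g)$.
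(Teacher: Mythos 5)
Your overall strategy (boundary cycles of nested subwalls, a genus budget bounding the number of ``bad'' consecutive pairs, and a long run of good pairs yielding an annulus and hence a planar subwall) is a legitimate and genuinely different route, but as written its central step is not established. The claim that at most $c_{1}g$ indices are bad is where the entire difficulty lives, and you yourself flag it as ``the main obstacle'' and ``the bulk of the work'' before deferring it to unspecified classical results. The sketch you give --- cut along all the $C_{i}$ simultaneously and charge each bad index to a drop in Euler genus --- does not go through as stated: a bad index need not cost any genus. For example, $C_{i}$ and $C_{i+1}$ may cobound an annulus while the ladder $A_{i}$ between them does not lie in it (on a torus two parallel non-separating circles cobound \emph{two} complementary annuli, and the rungs of $A_{i}$ may be distributed between them); turning such a configuration into a genus charge requires a topological argument about the embedded subgraph $C_{i}\cup A_{i}\cup C_{i+1}$, not about the curves alone. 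For the same reason the ``cylinder lemma'' you invoke (three disjoint $C_{i}$--$C_{i+1}$ paths force $A_{i}$ into $\mathrm{An}_{i}$ once the two cycles cobound an annulus) is false in the form stated. So the proposal is a plan whose hardest component is still open, not a proof.

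For comparison, the paper's argument is a short pigeonhole that sidesteps all of this: choose $\ell$ so large that every $\ell$-wall contains $g+1$ disjoint $(k+1)$-walls; by Lemma~B.6 of Diestel's book the $\ell$-wall contains a $k$-wall $H'$ each of whose bricks bounds an open disc in $\Sigma$; if none of these discs meets a point of $H$ then $H'$ is flat, and otherwise the offending disc contains the other $(k+1)$-walls, which are therefore flat. If you want to rescue your approach, the cleanest repair is to replace the per-index genus bookkeeping by a global pigeonhole of this kind over $g+1$ pairwise disjoint subwalls (or over $g+1$ pairwise ``far apart'' members of your nested family), rather than trying to certify every consecutive pair of concentric cycles individually.
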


\begin{proof}
Let $\ell$ be chosen large enough that every $\ell$-wall contains $g+1$ disjoint $k+1$-walls. By \cite[Lemma~B.6]{diestel}, any $\ell$-wall $H$ in a surface $\Sigma$ of genus~$g$ contains a $k$-wall $H'$ each of whose bricks bounds an open disc in~$\Sigma$. If none of these open discs contains a point of $H$, the wall $H'$ is flat. Otherwise, the disc containing a point of $H$ contains all the other $k+1$-walls we considered, and thus, all these are flat.
\end{proof}

\begin{lem}\label{lem:torsotwlarge}
Let $G$ be a graph of tree-width at least~$w$. Then in every tree-decomposition of $G$ the torso of at least one part also has tree-width at least~$w$.
\end{lem}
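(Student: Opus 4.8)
The plan is to argue by contradiction: suppose $G$ has a tree-decomposition $(V_t)_{t\in T}$ in which every torso $G_t$ has tree-width at most $w-1$. I will show that then $G$ itself has tree-width at most $w-1$, contradicting the hypothesis. The key tool is the standard fact that a tree-decomposition of $G$ in which each torso $G_t$ admits a ``small'' tree-decomposition can be refined to a single tree-decomposition of $G$ of the corresponding width: one substitutes, at each node $t$, a tree-decomposition of $G_t$ of width $\le w-1$ in place of the single bag $V_t$, reattaching the edges of $T$ at nodes of these local decompositions appropriately.

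First I would recall precisely why the torso $G_t$ contains all the relevant ``glue'': for each edge $tt'$ of $T$, the adhesion set $V_t\cap V_{t'}$ is a clique in $G_t$ by definition of the torso, so in any tree-decomposition $(W_s)_{s\in S_t}$ of $G_t$ there is a node $s=s(t,t')\in S_t$ whose bag $W_s$ contains all of $V_t\cap V_{t'}$. Next, for each $t$ fix such a tree-decomposition $(W^t_s)_{s\in S_t}$ of $G_t$ of width at most $w-1$ (which exists by assumption), and form the tree $S$ obtained from the disjoint union of the $S_t$ by adding, for every edge $tt'\in E(T)$, an edge between the chosen nodes $s(t,t')\in S_t$ and $s(t',t)\in S_{t'}$. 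One checks that $S$ is a tree (it is obtained from the forest $\bigsqcup_t S_t$ by adding exactly $|E(T)|$ edges, one ``across'' each edge of $T$, without creating cycles, since contracting each $S_t$ to a point recovers $T$).

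Then I would verify that $(W^t_s)_{s\in S_t,\ t\in T}$, indexed over $s\in S$, is a tree-decomposition of $G$. Coverage of vertices and edges is immediate: every vertex and every edge of $G$ lies in some torso $G_t$ (edges inside $V_t$ are present in $G_t$; edges across an adhesion set are covered because that set is a clique in both incident torsos), hence in some bag $W^t_s$. The connectivity (interpolation) condition is the point requiring care: given $v\in V(G)$, the set of torsos containing $v$ induces a subtree $T_v$ of $T$, and within each such torso the nodes whose bag contains $v$ induce a subtree of $S_t$; moreover if $tt'\in E(T_v)$ then $v\in V_t\cap V_{t'}\subseteq W^t_{s(t,t')}\cap W^{t'}_{s(t',t)}$, so the chosen connecting node on each side contains $v$ — this is exactly what is needed to splice the local subtrees into a single subtree of $S$. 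Finally every bag $W^t_s$ has at most $w$ vertices, so $G$ has tree-width at most $w-1$, a contradiction.

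I expect the main (though still routine) obstacle to be the bookkeeping in the interpolation step — making sure the subtrees of the various $S_t$ glue together along the bridging edges $s(t,t')\,s(t',t)$ into a genuine subtree of $S$ — together with the observation that $S$ is indeed a tree; everything else (coverage, width bound) is immediate from the definitions.
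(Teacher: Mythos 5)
Your proof is correct and follows essentially the same route as the paper, which simply cites Lemma~12.3.5 of Diestel's book for the step of combining width-$(w-1)$ tree-decompositions of the torsos into one for $G$. You have in effect written out the proof of that cited lemma (adhesion sets are cliques in the torsos, hence live in single bags, so the local decompositions can be spliced along the edges of $T$), and the details check out.
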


\begin{proof}
If every torso has a tree-decomposition of width at most $w-1$, we can use \cite[Lemma~12.3.5]{diestel} to combine these into a tree-decomposition of $G$ of width at most~$w-1$.
\end{proof}

The following lemma is a direct corollary of Lemmas~B.4 and~B.5 from \cite{diestel}.

\begin{lem}\label{lem:genredcurves}
For every surface $\Sigma$, every closed curve $C \subset \Sigma$ that does not bound a disc in $\Sigma$ is genus-reducing.
\end{lem}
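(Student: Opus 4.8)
The plan is to turn this topological claim into a short Euler-characteristic computation, invoking the two cited lemmas of~\cite{diestel} for the standard bookkeeping. I will write $g(S):=2-\chi(S)$ for the Euler genus, and since the definition of \emph{genus-reducing} refers only to the components of $\Sigma\setminus C$ and their boundary circles, I will assume from the outset that $C$ is a simple closed curve. Cutting $\Sigma$ along $C$ gives a bordered surface $\hat\Sigma$; capping each of its boundary circles with a disc yields the closed surface(s) whose genus the definition compares with $g(\Sigma)$. The proof then splits into the two cases of the definition: $C$ non-separating (one complementary component, hence one capped surface) and $C$ separating (two components, hence two capped surfaces).

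In the non-separating case I would simply observe that cutting along the circle $C$ leaves $\chi$ unchanged (as $\chi(C)=0$) and that each capping disc raises $\chi$ by $1$, so the single capped surface $\Sigma_0$ satisfies $\chi(\Sigma_0)>\chi(\Sigma)$, i.e. $g(\Sigma_0)<g(\Sigma)$; here the hypothesis on $C$ is not even used. In the separating case the same bookkeeping gives $\chi(\Sigma_1)+\chi(\Sigma_2)=\chi(\Sigma)+2$, equivalently $g(\Sigma_1)+g(\Sigma_2)=g(\Sigma)$. Since Euler genus is non-negative, this already forces $g(\Sigma_i)\le g(\Sigma)$, and $g(\Sigma_i)=g(\Sigma)$ can occur only if $g(\Sigma_{3-i})=0$, that is (by the classification fact that $S^2$ is the unique closed surface of Euler genus $0$) only if $\Sigma_{3-i}\cong S^2$.

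The crux — and the only place the hypothesis enters — is to rule out this last possibility: if $\Sigma_{3-i}\cong S^2$, then deleting the disc used to cap it exhibits the corresponding component of $\Sigma\setminus C$ as an open disc with boundary $C$, so $C$ bounds a disc in $\Sigma$, contrary to assumption. Hence both capped surfaces have genus strictly below $g(\Sigma)$, and $C$ is genus-reducing. I expect no real obstacle here; the argument is essentially the remark that already follows the definition of \emph{genus-reducing} in the text, and the only points needing care are citing \cite[Lemmas~B.4 and~B.5]{diestel} correctly for the behaviour of $\chi$ under cutting and capping and for the Euler-genus-$0$ classification, and noting that when $\Sigma\simeq S^2$ there is nothing to prove since then every simple closed curve already bounds a disc.
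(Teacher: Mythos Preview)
Your argument is correct and is precisely the standard Euler-characteristic computation that underlies the cited Lemmas~B.4 and~B.5 of~\cite{diestel}; the paper gives no independent proof but simply records the lemma as an immediate corollary of those two results. You have merely unpacked what the citation contains, so there is no methodological difference to discuss.
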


Let $\Sigma$ be a (closed) surface and $G$ be a graph embedded in $\Sigma$. For a face $f$ of $G$, let $S$ be the set of vertices that lie on $\boundary f$. If we delete $S$ and add a new vertex $v$ to $G$ with neighbours $N(S)$, we obtain a graph $G'$. It is easy to see that we can extend the induced embedding of $G-S$ to an embedding of $G'$. We say that the graph $G'$ embedded in $\Sigma$ was obtained from~$G$ by \emph{contracting $f$ to $v$}.

The following lemma is from Demaine and Hajiaghayi \cite{DemaineHajiaghayi08}.
\begin{lem}\label{lem:contractingface} For every two integers $t$ and $g$  there exists an integer $s = s(t,g)$ such that the following holds. Let $G$ be a graph of tree-width at least $s$ embedded in some surface $\Sigma$ of genus $g$. If $G'$ is obtained from $G$ by contracting a face to a vertex, then $G'$ has tree-width at least $t$.
\end{lem}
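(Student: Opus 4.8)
The plan is to reduce the lemma to a statement about \emph{deleting} a face boundary, and then to exhibit a large wall inside the modified graph by combining the grid theorem (Theorem~\ref{thm:gridthm}) with the flat-wall lemma (Lemma~\ref{lem:flatwall}).

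First I would make the elementary observation that, writing $S$ for the set of vertices on $\boundary f$, the graph $G'$ arises from $G$ by deleting $S$ and adding a single new vertex~$v$; hence $G'-v=G-S$, so $G-S$ is a subgraph of~$G'$ and $\tw(G')\ge\tw(G-S)$. It therefore suffices to choose $s=s(t,g)$ so that $\tw(G)\ge s$ forces $\tw(G-S)\ge t$ whenever $S$ is the vertex set of a face boundary of some embedding of $G$ in a surface of genus at most~$g$. To fix the constants I would first pick $k$ so large that every wall of size~$k$ has tree-width at least~$t$ --- this is standard, since walls of size~$k$ contain arbitrarily large grid minors and tree-width is minor-monotone (cf.~\cite{diestel}). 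Next I would pick $r$ (of order~$k$) so large that (a)~$H_r$ has a sub-wall of size~$k$ using only vertices strictly enclosed by its boundary cycle, and (b)~for every brick $B$ of $H_r$ the graph $H_r-V(B)$ has a sub-wall of size~$k$; both are routine facts about walls. Finally I would set $\ell:=\ell(r,g)$ as in Lemma~\ref{lem:flatwall} and let $s$ be the bound supplied by Theorem~\ref{thm:gridthm} for walls of size~$\ell$.

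Now let $G$ be embedded in a surface $\Sigma$ of genus at most~$g$ with $\tw(G)\ge s$. By the grid theorem $G$ contains a wall of size~$\ell$, and by Lemma~\ref{lem:flatwall} this wall in turn contains a wall $H$ of size~$r$ that is flat in~$\Sigma$; let $C$ be its boundary cycle, bounding an open disc $D(H)$, and let its bricks $B_i$ bound the pairwise disjoint open discs $D(B_i)\subseteq D(H)$. The crux is a topological dichotomy for the contracted face~$f$: since $f$ is a connected subset of $\Sigma$ disjoint from $G$, hence from~$C$, and $C$ separates $D(H)$ from $\Sigma\setminus\closure{D(H)}$, either $f\subseteq\Sigma\setminus\closure{D(H)}$ or $f\subseteq D(H)$. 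In the first case $S\cap D(H)=\emptyset$, so the sub-wall of~$H$ from~(a), whose vertices all lie in the open disc~$D(H)$, is a wall of size~$k$ disjoint from~$S$, hence a subgraph of~$G-S$. In the second case $f$, being connected and disjoint from~$H$, lies in a single face of~$H$ inside~$D(H)$, which by flatness of~$H$ is one of the brick discs~$D(B_i)$; as $D(B_i)$ is a face of~$H$ it contains no vertex of~$H$, so $\boundary f\subseteq\closure{D(B_i)}$ yields $S\cap V(H)\subseteq V(B_i)$, and the sub-wall of~$H-V(B_i)$ from~(b) is a wall of size~$k$ disjoint from~$S$ and hence a subgraph of~$G-S$. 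Either way $G-S$ contains a wall of size~$k$, so $\tw(G-S)\ge t$, which finishes the proof.

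I expect the only genuinely delicate point to be the second case of the dichotomy --- specifically, using the definition of a flat wall to argue that a face of~$H$ contained in~$D(H)$ must be exactly one of the prescribed brick discs, so that its boundary walk meets $V(H)$ only in the corresponding brick. The reduction to~$G-S$, the bound on the tree-width of large walls, and the two routine facts about sub-walls used to fix~$r$ should all be straightforward.
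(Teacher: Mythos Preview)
The paper does not prove this lemma at all: it simply quotes it from Demaine and Hajiaghayi~\cite{DemaineHajiaghayi08}. So there is no ``paper's proof'' to compare against, and your proposal is genuinely new content relative to the paper.

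Your argument is sound and pleasantly self-contained. The reduction to $G-S$ via $G'-v=G-S$ is clean, and the dichotomy on where the face~$f$ sits relative to the flat wall~$H$ is the right idea. Both cases go through as you describe: in the first, $\overline f\subseteq\Sigma\setminus D(H)$ so $S$ misses the open disc~$D(H)$ and hence the inner sub-wall from~(a); in the second, $S\cap V(H)\subseteq V(B_i)$, and your condition~(b) on~$r$ (removing the six branch vertices of a single brick of~$H_r$) translates correctly to the subdivision~$H$, since the sub-wall of $H_r-V(B)$ uses no edge of~$B$ and hence its image in~$H$ avoids all of~$V(B_i)$.

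The one point you flag as delicate---that every face of the flat wall~$H$ inside $D(H)$ is one of the prescribed brick discs---does need a word. The definition of ``flat'' in the paper only asserts that the bricks bound disjoint discs inside $D(H)$, not that these discs exhaust $D(H)\setminus H$. You can close this either by an Euler-formula count on the $2$-connected core of~$H_r$ (its inner faces in the plane are exactly the bricks), or more directly: since $\closure{D(H)}$ is a closed disc and $H\cap\closure{D(H)}$ is a connected plane graph containing its boundary, the inner faces of~$H$ in~$D(H)$ are determined combinatorially and coincide with those of the natural planar drawing of the $2$-connected core of~$H_r$, which are the bricks. Either way this is routine; your assessment of where the work lies is accurate.
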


%

Our next lemma is due to Mohar and Thomassen \cite{graphsonsurfaces}:

\begin{lem}\label{lem:conccycles}
Let $G\hookrightarrow\Sigma\neq S^2$ be an embedding of representativity at least $2k+2$ for some $k\in\N$. Then, for every face $f$ of $G$ in $\Sigma$ there are $k$ concentric cycles $(C_1,\ldots,C_k)$ in $G$ such that $f\subseteq \interior D(C_k)$.
\end{lem}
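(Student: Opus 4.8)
The plan is to construct the cycles from the inside out, by peeling the embedding off in layers around~$f$ while keeping throughout a disc around~$f$ whose frontier lies in~$G$; the bound on the representativity is what lets the peeling run for $k$ steps.

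Since the representativity is at least $2k+2>0$, all faces of~$G$ are open discs and $G$ is connected. Fix~$f$, and define closed subsets $\Delta_0\subseteq\Delta_1\subseteq\cdots$ of~$\Sigma$ by $\Delta_0:=\closure f$ and $\Delta_{i+1}:=\Delta_i\cup\bigcup\{\closure g:g\text{ a face of }G,\ \closure g\cap\boundary\Delta_i\neq\emptyset\}$. Each frontier $\boundary\Delta_i$ is the point set of a subgraph of~$G$, and since every vertex of~$G$ on $\boundary\Delta_i$ has all of its incident faces inside $\Delta_{i+1}$, the subgraphs $\boundary\Delta_0,\boundary\Delta_1,\dots$ are pairwise vertex-disjoint, while $f\subseteq\interior\Delta_i$ for every~$i$. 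The key claim is that $\Delta_0,\dots,\Delta_k$ may be taken to be closed discs. Granting this, for $i=1,\dots,k$ the frontier $\boundary\Delta_{k+1-i}$ is a closed walk of~$G$ bounding the disc $\Delta_{k+1-i}$, hence contains a cycle $C_i$ of~$G$ with $f\subseteq\interior D(C_i)$ and $\closure{D(C_i)}\subseteq\Delta_{k+1-i}$. These cycles are pairwise disjoint because the $\boundary\Delta_j$ are, and $D(C_1)\supseteq\cdots\supseteq D(C_k)$: since $\Delta_{k-i}\subseteq\interior\Delta_{k+1-i}$ is connected, disjoint from $C_i$, and contains $f\in\interior D(C_i)$, it lies inside $D(C_i)$, whence $\closure{D(C_{i+1})}\subseteq\Delta_{k-i}\subseteq D(C_i)$. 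Thus $C_1,\dots,C_k$ form $k$ concentric cycles in~$G$ with $f\subseteq\interior D(C_k)$, as required.

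It remains to prove the claim, and this is where I expect the main difficulty to lie. Suppose, for contradiction, that the peeling produces a region $\Delta_j$ with $j\leq k$ that is not a disc, $j$ least such. Then $\Delta_j$ arises from the disc $\Delta_{j-1}$ by gluing discs (faces) along arcs of its boundary circle, so the failure to be a disc is caused either by $\Delta_j$ already being all of~$\Sigma$, or by one of these gluings creating nontrivial topology. In the first case $\Sigma\not\hom S^2$ yields a $G$-normal closed curve that does not bound a disc; in the second case the offending gluing itself yields such a curve. In either case the curve can be routed so that it meets~$G$ only on the $j$ nested frontier cycles $\boundary\Delta_0,\dots,\boundary\Delta_{j-1}$ constructed so far --- each crossed at most twice --- and in at most one further vertex lying on a face added at step~$j$; so it meets~$G$ in at most $2j+1\leq 2k+1<2k+2$ vertices. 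By Lemma~\ref{lem:genredcurves} any closed curve that does not bound a disc is genus-reducing, so this contradicts the hypothesis that the representativity is at least $2k+2$; therefore $\Delta_j$ is in fact a disc. The delicate part, which is really the heart of the argument, is to organise the peeling so that the obstruction to disc-ness always stays localised near the current frontier --- absorbing into the disc any superfluous cap that a new boundary cycle might bound on its far side --- and then to exhibit the genus-reducing curve explicitly from this obstruction and bound the number of times it crosses each of the at most $k$ frontier cycles; everything else is routine.
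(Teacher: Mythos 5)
The paper does not actually prove this lemma; it is quoted from Mohar and Thomassen \cite{graphsonsurfaces}, so there is no in-paper argument to compare yours to. Judged on its own terms, your proof has a genuine gap, and you say so yourself: the entire content of the lemma sits in your ``key claim'' that each $\Delta_j$ with $j\le k$ is a disc, and for that claim you offer only the assertion that a failure of disc-ness yields a non-disc-bounding $G$-normal curve meeting $G$ in at most $2j+1$ vertices, followed by the admission that organising the peeling so this works is ``the delicate part, which is really the heart of the argument''. That is a deferral, not a proof.

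Two concrete points where the sketch does not yet go through. First, the curve you extract from a bad gluing need not fail to bound a disc: attaching a face to the disc $\Delta_{j-1}$ along two disjoint boundary arcs produces an annulus (or M\"obius band), and the core curve of an annulus may perfectly well bound a disc in $\Sigma$ on its far side, so Lemma~\ref{lem:genredcurves} gives no contradiction. Your proposed remedy --- absorbing the ``superfluous cap'' into $\Delta_j$ --- destroys the layer structure on which everything else rests: after absorption, $\Delta_j$ contains faces of arbitrarily high layer index, so the count ``from any point of $\partial\Delta_{j-1}$ one reaches $f$ crossing one vertex per layer, hence $j$ vertices in total'' no longer holds, and neither does the pairwise disjointness of the subsequent frontiers, which you use both to bound the crossing number and to get disjoint cycles at the end. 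Second, even when $\Delta_j$ is a disc, its topological frontier is a closed walk that may pass through cut vertices several times; extracting from it a single cycle $C_i$ with $f\subseteq\interior D(C_i)$ and $\closure{D(C_i)}\subseteq\Delta_j$ needs an argument (take the boundary of the face of the frontier-subgraph containing $\Sigma\setminus\Delta_j$, say), not just the phrase ``hence contains a cycle''. The standard way to make your strategy rigorous is to work with the distance function $d(x):=\min|C\cap G|$ over $G$-normal curves $C$ from $f$ to $x$ (exactly the ``distance in $\Sigma$'' already defined in this paper), show that the set of points at distance at most $i$ is contained in a disc whenever $2i+2$ is below the representativity, and then clean up the frontier as in Lemma~\ref{lem:tightcycles}; until that is written out, the proof is incomplete.
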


For an oriented curve $C$ and points $x,y\in C$ we denote by $xCy$\?{$xCy$} the subcurve of $C$ with endpoints $x,y$ that is oriented from $x$ to $y$. 
For a graph $G$ embedded in a surface $\Sigma$, a face $f$ of $G$, and a closed curve $C$ in $\Sigma$, let $\C(C,f)$\?{$\C(C,f)$} denote the number of components of $C\cap f$.

\begin{lem}\label{lem:curvehitsvortexonce}
Let $G$ be a graph embedded in a surface $\Sigma$, and let $F$ be the set of faces. For an integer $r>0$, consider all $G$-normal, genus-reducing curves $C$ in $\Sigma$ that satisfy $|C\cap G|<r$. Let $C$ be chosen so that $\sum_{f\in F}\C(C,f)$ is minimal. Then, $\C(C,f)\leq 1$ for all $f\in F$.
\end{lem}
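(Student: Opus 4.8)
The plan is to argue by contradiction: suppose the minimizing curve $C$ meets some face $f$ in at least two components, and use them to construct another $G$-normal genus-reducing curve $C'$ with $|C'\cap G|<r$ that hits $f$ in fewer components — and, crucially, without creating extra intersection components in any other face. Let $K_1$ and $K_2$ be two distinct components of $C\cap f$; each $K_i$ is an arc inside the open disc-like face $f$ (faces are discs here, since a genus-reducing curve that meets $G$ in few vertices forces positive representativity, or else we may treat the non-disc case separately), with endpoints on $\boundary f$, i.e. on vertices of $G$ or on points where $C$ leaves $f$. The endpoints of $K_1$ and $K_2$ split $C$ into subcurves; the idea is to reroute $C$ through $f$ so as to merge or eliminate components of $C\cap f$.

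First I would set up the combinatorial picture inside $f$. Since $f$ is an open disc, its closure $\closure f$ is a closed disc (with boundary identifications possibly, but locally a disc), and the arcs $K_1,K_2$ are disjoint chords of it. Walking along $C$, cyclically, we encounter the pieces of $C$ inside $f$ and the pieces outside $f$ alternately; label the in-$f$ pieces $K_1,\dots,K_s$ (so $s=\C(C,f)\geq 2$). The endpoints of all the $K_j$ lie on $\boundary f$. Consider two of these chords, say $K_1$ and $K_2$; because $\closure f$ is a disc, I can choose an arc $A\subseteq \closure f$ joining an endpoint $x$ of $K_1$ to an endpoint $y$ of $K_2$ that runs along $\boundary f$ — hence meets $G$ only in the vertices of $G$ it passes on $\boundary f$ — or, better, I pick $x,y$ to be consecutive endpoints of chords along $\boundary f$ so that $A$ meets $G$ in no new vertices at all beyond possibly its own endpoints. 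The new curve $C'$ is obtained from $C$ by the standard ``uncrossing'' swap along $A$: delete the two chords $K_1,K_2$ (or the relevant subarcs of them up to $x,y$) and splice in two copies of $A$ pushed slightly off $\boundary f$ into the interior of $f$, reconnecting the four loose ends of $C$ in the other of the two planar ways. This yields one or two closed curves; since $C$ was connected and we reconnected inside a disc, a short planar case check shows exactly one of the two resplicings keeps $C'$ a single closed curve, and that is the one to take.

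The key properties to verify for $C'$ are: (a) $|C'\cap G|\leq |C\cap G|<r$, because the arcs $A$ we added run along the face boundary and meet $G$ only in vertices that $C$ already met there (so we add no vertices; we may even lose some); (b) $\sum_{f'\in F}\C(C',f')<\sum_{f'\in F}\C(C,f')$, because inside $f$ we have strictly reduced the number of components (two chords became one shorter arc, or chords were absorbed into $\boundary f$), while inside every other face $f'\neq f$ the new pieces of $C'$ lie in $f$ or on $\boundary f$, hence contribute nothing to $\C(\cdot,f')$ — so no other count goes up; and (c) $C'$ is still genus-reducing. Point (c) is the one place to be careful: by Lemma~\ref{lem:genredcurves}, a closed curve is genus-reducing iff it does not bound a disc in $\Sigma$, so I must argue $C'$ does not bound a disc. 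Here I would use that $C$ and $C'$ are freely homotopic in $\Sigma$ — the uncrossing move is a homotopy supported in the disc $\closure f$ — and a curve bounds a disc iff it is null-homotopic; since $C$ is genus-reducing it is not null-homotopic, hence neither is $C'$, hence $C'$ is genus-reducing. (If one is wary of the homotopy claim when boundary identifications of $\closure f$ intervene, an alternative is to observe that the symmetric difference of $C$ and $C'$, as a $1$-chain, is the boundary of a subregion of the disc $\closure f$, so $C$ and $C'$ are homologous, and combine this with the structure of genus-reducing curves via Lemmas~B.4--B.6 of~\cite{diestel}.) This contradicts the minimality of $C$, so no face can be met in two or more components, proving $\C(C,f)\leq 1$ for all $f$.

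The main obstacle I anticipate is precisely step (c): making the ``$C'$ is still genus-reducing'' argument airtight, including the degenerate situations where $\closure f$ is not literally an embedded disc because two boundary vertices or arcs of $f$ are identified in $\Sigma$, and the case $\Sigma\simeq S^2$ where ``genus-reducing'' must be read as ``separating into two non-spheres'', which is vacuous/handled by the same homotopy or homology bookkeeping. The rerouting and the counting in (a),(b) are routine planar surgery; the homotopy/homology invariance under a surgery supported in one face closure is the crux and should be isolated as the one real lemma inside this proof.
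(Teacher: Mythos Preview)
Your step (c) is where the argument breaks down. The surgery you describe is \emph{not} a homotopy of $C$: once you delete the chords $K_1,K_2$ and reconnect the four loose ends differently inside the disc $\closure f$, you have changed the free homotopy class in general. Concretely, with endpoints in cyclic order $a,b,c,d$ on $\partial f$ and $C\setminus(K_1\cup K_2)$ consisting of arcs $P:b\to c$ and $Q:d\to a$, the only non-crossing resplicing other than the original is $\{a\text{--}d,\,b\text{--}c\}$, and this produces \emph{two} simple closed curves $P\cup(b\text{--}c)$ and $Q\cup(a\text{--}d)$, not one. The resplicing that yields a single closed curve is $\{a\text{--}c,\,b\text{--}d\}$, whose arcs must cross in the disc, so $C'$ is not simple and the notion of ``genus-reducing'' (and of bounding a disc) no longer applies as stated. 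Your homology fallback does not rescue this either: a separating genus-reducing curve (say the waist of a genus-$2$ surface) is null-homologous, so ``$C'$ homologous to $C$'' places no constraint on whether $C'$ bounds a disc.

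The paper avoids this trap by embracing the two-curve outcome rather than fighting it. It picks two components of $C\cap f$ lying on the boundary of a common component $D$ of $f\setminus C$, joins them by an arc $A$ through $D$, and obtains two simple closed curves $C_w,C_y$ with $C_w\cap C_y=A$. Each meets $G$ in strictly fewer vertices than $C$ (hence $\sum_f\C(\cdot,f)$ is strictly smaller, since for a simple $G$-normal closed curve this sum equals $|C\cap G|$). If either were genus-reducing it would contradict the minimality of $C$, so by Lemma~\ref{lem:genredcurves} both bound discs $D_w,D_y$ in $\Sigma$. A short argument then shows $D_w\cap D_y=A$, whence $\closure{D_w}\cup\closure{D_y}$ is a disc bounded by $C$, contradicting that $C$ is genus-reducing. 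The key idea you are missing is this last step: rather than trying to preserve non-triviality through a single surgery, one shows that if \emph{both} halves were trivial then $C$ itself would be.
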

\begin{proof}
Suppose there is an $f\in F$ with $\C(C,f)>1$. Then there is a component $D$ of $f-C$ whose boundary $\boundary D$ contains two distinct components of $f\cap C$,%
   \COMMENT{}
say the interiors of disjoint arcs $xCy$ and $zCw$ following some fixed orientation of $C$. Then $x,y,z,w$ appear in this (cyclic) order on $C$. 

Join $x$ to $z$ by an arc $A$ through $D$. Then $C_w:=xAzCx$ and $C_y:=zAxCz$ are closed curves in $\Sigma$ meeting precisely in $A$. 
Each of them meets $G$ in fewer vertices than $C$ does, so neither $C_w$ nor $C_y$ are genus reducing. 
This implies by Lemma~\ref{lem:genredcurves} that $C_w$ and $C_y$ bound discs $D_w$ and $D_y$ in $\Sigma$. 
If $D_w$ contains a point of $C_y$ then $C_y\setminus A\subseteq D_w$ and hence $C\subseteq D_w$. 
But then $C$ bounds a disc contained in $D_w\subseteq \Sigma$ (by the Jordan curve theorem), which contradicts our assumption that $C$ is genus-reducing in $\Sigma$.
Hence $D_w\cap C_y=\emptyset$, and similarly $D_y\cap C_w=\emptyset$. This implies that $\closure{D_{w}} \cap \closure{D_{y}} = A$. 
But then $\closure{D_w} \cup \closure{D_y}$ is a closed disc in $\Sigma$ bounded by $C$, a contradiction as earlier.
\end{proof}

Whenever there are cycles enclosing a vortex $V$, we can find cycles tightly enclosing $V$:
\begin{lem}\label{lem:tightcycles}
For an integer $\alpha>0$, let $(\sigma,G_0,A,\V,\W)$ be  an $\alpha$-near embedding of some graph $G$ in a surface $\Sigma$ and let $C_1,\ldots,C_n$ be cycles enclosing a vortex $V\in \V$. Then, there are $n$ cycles $C'_1,\ldots,C'_n$ in $G_0$ that enclose $V$ tightly, 
such that $D(C'_1)\subseteq D(C_1)$.
\end{lem}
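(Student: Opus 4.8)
The goal is, starting from concentric cycles $C_1,\dots,C_n$ in $G_0$ enclosing $V$, to replace them by concentric cycles $C'_1,\dots,C'_n$ in $G_0$ that still enclose $V$ but do so *tightly*, i.e. every point on $\boundary D(C'_k)$ is at distance at most $n-k+2$ in $\Sigma$ from some vertex of $\Omega(V)$, while keeping $D(C'_1)\subseteq D(C_1)$. The natural approach is to choose the $C'_k$ greedily from the inside out, each time picking a cycle that is ``as close to $\Omega(V)$ as possible'' subject to still enclosing $\Omega(V)$ and lying inside the previously chosen disc.

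First I would set up the induction. Pick $C'_1$ to be a cycle of $G_0$ that bounds a disc $D$ with $\Omega(V)\subseteq D\subseteq D(C_1)$ and such that $D$ is minimal with this property (such a cycle exists, e.g.\ $C_1$ itself qualifies, and minimality can be taken among discs bounded by cycles of $G_0$). Then, having chosen $C'_1,\dots,C'_{k-1}$ bounding $D(C'_1)\supseteq\dots\supseteq D(C'_{k-1})$, each a cycle of $G_0$, all enclosing $\Omega(V)$, I pick $C'_k$ to be a cycle of $G_0$ bounding a disc $D(C'_k)\subseteq D(C'_{k-1})$ with $\Omega(V)\subseteq D(C'_k)$ and $D(C'_k)$ minimal. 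This automatically gives concentric cycles enclosing $V$ with $D(C'_1)\subseteq D(C_1)$; it remains to verify the tightness inequality for each $k$.

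The heart of the argument is the distance bound, and I expect this to be the main obstacle. Fix $k$ and a point $v\in\boundary D(C'_k)$; I must produce $w\in\Omega(V)$ with distance in $\Sigma$ at most $n-k+2$. Since $v$ lies on the cycle $C'_k$ of $G_0$, it lies on an edge (or is a vertex) of $G_0$, so it is within distance $1$ (along $G_0$-normal curves) of a vertex $u$ of $C'_k\subseteq G_0$; thus it suffices to bound the distance of $u$ from $\Omega(V)$ by $n-k+1$. Here I would use minimality of $D(C'_k)$: if $u$ were far from $\Omega(V)$, one could find a short $G'_0$-normal curve from a point just inside $C'_k$ near $u$ around to another point of $\boundary D(C'_k)$, cutting off a strictly smaller disc that still contains $\Omega(V)$ — either contradicting minimality of $D(C'_k)$, or, if the curve used one of the cycles $C'_{k+1},\dots$ (which do not yet exist at this stage) — so instead the bound must be obtained by downward induction from the outer cycles inward, or by a separate topological lemma. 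The clean way is: by minimality, every vertex of $C'_k$ must be ``seen'' by $\Omega(V)$ through the annulus region, and an explicit count of how the $n-k$ cycles $C'_{k+1},\dots$ nest inside gives the $+$ constant; alternatively, induct on $k$ downward from $k=n$, where the bound is distance $\le 2$ directly from minimality (no cycle strictly between $C'_n$ and $\Omega(V)$), and at each step a point on $C'_k$ is within distance $1$ of $C'_{k+1}$ by minimality (else one could reroute and shrink the disc), giving the recursion $d_k\le d_{k+1}+1$ and hence $d_k\le n-k+2$.

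Finally I would assemble these pieces: the construction gives concentric cycles of $G_0$ with the nesting and enclosure properties, and the downward induction on $k$ via the minimality-forces-closeness step establishes the tightness inequality, completing the proof. The one technical point to be careful about is that ``distance in $\Sigma$'' is defined via $G'_0$-normal curves, so when bounding distances I must be sure the rerouting arcs can be taken $G'_0$-normal and that crossing a vortex disc $D(W)$ for $W\in\W$ is accounted for by the virtual edges of $G'_0$; since all the $C'_k$ and $\Omega(V)$ live in $G_0\subseteq G'_0$ and the minimality is taken over cycles of $G_0$, this causes no trouble.
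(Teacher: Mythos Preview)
Your construction collapses after the first step. You choose $C'_1$ to bound a \emph{minimal} disc $D\subseteq D(C_1)$ containing $\Omega(V)$, and then ask for $C'_2$ bounding a disc $D(C'_2)\subseteq D(C'_1)$ that still contains $\Omega(V)$. But by the very minimality of $D(C'_1)$ there is no cycle of $G_0$ bounding a strictly smaller such disc, so $C'_2$ cannot be chosen disjoint from $C'_1$. The problem is that you never use the original cycles $C_2,\dots,C_n$: their existence is precisely what guarantees that $n$ disjoint nested cycles can be found, and your scheme discards this information.

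The paper's remedy is to minimise each cycle \emph{within its own annulus}: setting $D_{n+1}:=D(V)$, one repeatedly replaces $C_k$ by any cycle of $G'_0$ lying in $\closure{D_k}\setminus\closure{D_{k+1}}$, stopping when no such replacement is possible for any $k$. This keeps the cycles disjoint and nested throughout. The tightness bound is then proved (much as you sketch in your ``downward induction'' paragraph) by a face argument: minimality forces every vertex of $C_k$ to lie on the boundary of a face that also meets $C_{k+1}$, so one can step inward one cycle at a time along curves through faces, and induction on $n-k$ gives the required bound of $n-k+2$. Your distance heuristic is on the right track, but it only makes sense once the construction is repaired so that $C'_{k+1},\dots,C'_n$ actually exist when you analyse $C'_k$.
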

\begin{proof}
Let us write $D_k:=D(C_k)$ for $1\leq k \leq n$ and $D_{n+1}:=D(V)$ and $V(C_{n+1}):=\Omega(V)$. 
Suppose there is a cycle $C\subseteq G'_0\cap \closure{D_k} \setminus \closure{D_{k+1}}$ for some $1\leq k \leq n$ such that $C\neq C_k$. 
Then, we can replace $C_k$ by $C$ and obtain a new set of cycles in $G_0$ enclosing $V$. 
By this replacement, we reduce the number of vertices and edges of $G'_0$ in $\closure{D_k}$, 
so we can repeate this step only finitely often. 
We may assume that $C_1,\ldots,C_n$ were chosen so that such that a replacement is not possible. 

We claim that these cycles enclose $V$ tightly.  To see this, consider
for a vertex $v\in V(C_k)$ the set $F$ of all faces $f$ of $G_0'$ with $f\subseteq D(C_k)$ and $v\in \boundary f$. 
For every two neighbours $x,y$ of $v$ that lie on the boundary of the same face $f\in F$, 
there is a path in $G'_0\cap\boundary f$ linking $x,y$ and avoiding $v$. 
Therefore, there is a face $f_v\in F$ such that $\boundary f_v$ contains a vertex $v'\in V(C_{k+1})$\footnote{Indeed, 
thickening $G'_0$ in $\Sigma$ turns $\closure f$ into a compact surface with boundary. By the classification of these surfaces, the component
of $\boundary f$ meeting the thickened vertex $v$ is a circle, which defines a closed walk in $G'_0$. This walk contains the desired path.
}:
otherwise, $\bigcup\{\boundary f : f\in F\}$ would contain a path between the two neighbours $v^-$, $v^+$ of $v$ in $C_k$, 
that avoids $v$. 
Substituting this path for the path $v^-vv^+$ in $C_k$ then turns $C_k$ into a cycle in $G'_0\cap (\closure D_k \setminus \closure D_{k+1})$ avoiding $v$, 
contradicting the choice of the $C_i$. 
Similarly, every edge $e$ of $C_k$ lies on the boundary of a face $f$ of $G'_0$ that also contains a vertex $v'$ of $C_{k+1}$. 
Then every inner point $x$ of $e$ can be linked to $v'$ by a curve through $f$.
By induction on $n-k$, we may assume that, unless $k=n$ and $v'\in \Omega(V)$, there is a curve $C$ linking $v'\in  V(C_{k+1})$ 
to some $w\in \Omega(V)$, with $|C\cap G'_0|\leq n-(k+1)+2$. We extend this curve by a curve in $f$ from $v$ or $x$ to $v'$ which gives us a curve as desired.
\end{proof}


\section{Taming a Vortex}
\label{sec:modifying}


In this section we describe how to obtain a new (and simpler) near-embedding from an old one if we are allowed to move a bounded number of vertices from the embedded part of the graph to the apex set. 
For example, we might reduce the number of large vortices by combining two of them, or reduce the genus of the surface by cutting along a genus-reducing curve. 

\begin{lem}\label{lem:mergevortices}
Let $(\sigma,G_0,A,\V,\W)$ be an $(\alpha_0,\alpha_1,\alpha_2)$-near embedding of a graph $G$ in a surface $\Sigma$.
If there are two vortices $V,W\in \V$ of length at least 4 
and a $G$-normal curve $C$ in $\Sigma$ from $D(V)$ to $D(W)$ that meets $G$ in at most $d$ vertices, 
then there is a vertex set $A'\subseteq V(G_0)$ of size $|A'|\leq 2\alpha_2+d$ 
and a vortex $V'\subseteq G-A'$ such that $G$ has an $(\alpha_0+2\alpha_2+2+d,\alpha_1-1,\alpha_2)$-near embedding 
\[
	(\sigma|_{G_0-A'},G_0,A\cup A',\V', \W')
\]
in $\Sigma$ with $\V' \subseteq (\V \setminus \{V,W\})-A'\cup \{V'\}$ and 
$\W' = \W-A'\cup \{V-A': V\in\V, |\Omega(V)\setminus A'| \leq 3\}$.
\end{lem}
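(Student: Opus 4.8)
The plan is to merge the two large vortices $V$ and $W$ into a single large vortex $V'$ by routing a ``corridor'' along the curve $C$, and to absorb into the new apex set $A'$ exactly those vertices needed to make this corridor behave like a vortex boundary. First I would fix linked-adhesion decompositions $(X_1,\dots,X_{|V|})$ of $V$ and $(Y_1,\dots,Y_{|W|})$ of $W$, each of adhesion at most $\alpha_2$; the ``bad'' vertices to be moved to the apex set will be the two adhesion sets $Z$ of these decompositions nearest the discs $D(V)$ and $D(W)$ together with the (at most $d$) vertices $C\cap G$ that the connecting curve meets. That is exactly the set $A'$, of size $|A'|\le 2\alpha_2+d$. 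After deleting $A'$, the two punctured vortices $V-A'$ and $W-A'$ each still have a path-decomposition whose first (resp.\ last) part is ``clean'' at the end facing the curve, and the curve $C-A'$ now meets $G-A'$ nowhere, so it lies inside a single face of $G_0-A'$ — this is where I use that $A'$ contains $C\cap G$. The embedded part $G_0$ does not change (we restrict $\sigma$ to $G_0-A'$, but re-using the old discs), so properties (iv) of the near-embedding definition need only be re-checked for the new disc.

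The key steps, in order. (1) Choose the decompositions and define $A'$ as above; verify $|A'|\le 2\alpha_2+d$. (2) Build the new accommodating disc $D(V')$: take $D(V)$, $D(W)$ and a thin closed tube around the arc $C$ joining them (which runs inside a face of the embedded graph once $C\cap G$ has been removed), and check that its interior can be made disjoint from all the other discs $D_i$ — possibly after shrinking the tube — so that conditions (i) and (iv) of $\alpha$-near embeddability still hold with $G_0$ unchanged. This costs us one unit of $\alpha_1$ (we pass from $n$ vortices to $n-1$, hence the $\alpha_1-1$) and adds to the apex bound the stated $2\alpha_2+2+d$ (the ``$+2$'' coming from the two society vertices at the cut ends of $V$ and $W$ that must be re-attached or discarded when forming $V'$). (3) Define $V'=(G',\Omega')$ where $G'$ is the union of $(V-A')$, $(W-A')$ and whatever part of $G$ lived ``along'' $C$, and $\Omega'$ is obtained by reading the society vertices of $V-A'$, then those of $W-A'$, in the cyclic order dictated by $\partial D(V')$; concatenating the two path-decompositions (with a few bounded-size parts inserted in the middle for the corridor) yields a decomposition of $V'$ of adhesion at most $\alpha_2$, since the only new adhesion sets are either old ones or subsets of $C\cap G\subseteq A'$, hence empty after deletion. (4) Re-form $\W'$: deleting $A'$ may shrink some large vortices of $\V$ to length $\le 3$, and these get demoted to small vortices, giving exactly the stated formula for $\W'$; small vortices are unaffected except for the deletion $\W-A'$. (5) Collect the bookkeeping: the new tuple is an $(\alpha_0+2\alpha_2+2+d,\alpha_1-1,\alpha_2)$-near embedding.

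The main obstacle I expect is step (2)–(3): making the topology of the merged disc precise. One has to argue that the tube around $C$ can be chosen to avoid all the other discs $D_1,\dots,D_n$ (it starts and ends on $\partial D(V)$ and $\partial D(W)$, but en route it might be forced close to some $D_j$), and that after the merge the society vertices of $V'$ still meet $\partial D(V')$ in exactly the graph $\sigma(\Omega')$ with the cyclic order matching the linear order of the decomposition — the delicate point being the two ``seams'' where the old boundary of $D(V)$ (resp.\ $D(W)$) is cut open and spliced to the tube. Handling the cut cleanly is precisely why the adhesion set $Z$ nearest each disc is thrown into $A'$: once those vertices are gone, the relevant end-part of each punctured vortex meets $G_0-A'$ only in $\Omega_i-A'$ minus a single society vertex, so the splice can be performed within one face. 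The rest of the verification — adhesion bound for the new decomposition, disjointness of the remaining discs, the $\W'$ formula — is routine given the setup.
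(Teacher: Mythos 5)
Your proposal matches the paper's proof in all essentials: adjust $C$ to run between society vertices, put into $A'$ the at most $d$ vertices on $C$ together with one adhesion set of $V$ and one of $W$ at the attachment points, merge $D(V)$, $D(W)$ and a thin tube around $C$ into a single disc, and concatenate the two (cyclically rotated) path-decompositions to get a decomposition of $V'$ of adhesion at most $\alpha_2$. The topological concerns you flag in step (2)--(3) are real but are handled in the paper just as you suggest (fattening $C$ so the tube meets $G_0-S$ only in the remaining society vertices, and cutting each cyclic order open at the deleted society vertex), so no gap.
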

\begin{proof}
Let us choose decompositions $(X_1,\ldots,X_n)$ of $V$ and $(Y_1,\ldots,Y_m)$ of $W$ of adhesion at most $\alpha_2$, where 
$\Omega(V)=(v_1,\ldots,v_n)$ and $\Omega(W)=(w_1,\ldots,w_m)$. 
By slightly adjusting $C$ we may assume that the endpoints of $C$ are vertices $v_k$ and $w_\ell$, so that $C \cap D(V)=\{v_k\}$ 
and $C\cap D(W)=\{w_{\ell}\}$ for some indices $1\leq k\leq n$ and $1\leq \ell \leq m$. 
Let $S$ be the set of vertices in $\Sigma$ on $C$. 
By fattening $C$ to a disc $D$ we obtain a closed disc $D':=\closure{D\cup D(V)\cup D(W)}$ such that 
$D'\cap (G_0-S)= (\Omega(V)\cup\Omega(W))\setminus\{v_k,w_\ell\}$. By reindexing if neccesary we may assume that the orientations 
of $\partial D'$ induced by $\Omega(V)\setminus \{v_k\}$ and $\Omega(W)\setminus\{w_\ell\}$ agree.

Let $X:=(X_k\cap X_{k+1})$ if $k<n$ and $X:=\{v_k\}$ if $k=n$ and let $Y:=(Y_\ell\cap Y_{\ell+1})$ if $\ell<m$ 
and $Y:=\{w_\ell\}$ if $\ell=m$. Note that $|X|\leq \alpha_2$ and $|Y|\leq \alpha_2$. 
If $k\neq 1$, let $X'_{k-1}:=(X_{k-1}\cup X_k)\setminus X$ and $X'_i:=X_i\setminus X$ for all $i \not \in \{k-1,k\}$. 
If $k=1$, let $X'_2:=(X_1\cup X_2)\setminus X$ and $X'_i:=X_i\setminus X$ for all $i\geq 3$. 
Define sets $Y'_j$ analoguosly with $\ell$ and $m$ replacing $k$ and $n$. 
Finally, let $A':= S\cup X\cup Y$ and $G':=(V\cup W)-A'$. Then for
\[	
	\Omega':=(v_{k+1},\ldots,v_n,v_1,\ldots,v_{k-1},w_{\ell+1},\ldots,w_m,w_1,\ldots,w_{\ell-1})
\]
the tuple $V':=(G',\Omega')$ is a vortex with a decomposition
\[
 	(X'_{k+1},\ldots,X'_n,X'_1,\ldots,X'_{k-1}, Y'_{\ell+1},\ldots,Y'_m,Y'_1,\ldots,Y'_{\ell-1})
\] 
of adhesion at most $\alpha_2$. Now it is easy to see that $A'$ satisfies the conditions as desired.
\end{proof}

Our next lemma shows how to make vortices linked. The techniques used in its proof originate from \cite{DGJT} and were extended by Geelen and Huynh \cite{GH}.
\begin{lem}\label{lem:vorticeslinked}
Let $(\sigma,G_0,A,\V,\W)$ be  an $(\alpha_0,\alpha_1,\alpha_2)$-near embedding of a graph $G$ in a surface $\Sigma$ 
such that every small vortex $W\in\W$ is properly attached.
Moreover, assume that
\begin{enumerate}[\rm (i)]
\item 
For every vortex $V\in \V$ there are $\alpha_2+1$ concentric cycles $C_0(V),\ldots,C_{\alpha_2}(V)$ in $G'_0$ tightly enclosing $V$.
\item For distinct vortices $V,W\in\V$, the discs $\closure{D(C_0(V))}$ and $\closure{D(C_0(W))}$ are disjoint.
\end{enumerate}
Then there is a graph $\tilde G_0\subseteq G_0$ containing $G_0\setminus \Big (\bigcup_{V\in\V}D(C_0(V))\Big)$, 
a set $\tilde A\subseteq V(G)\setminus V(\tilde G_0)$ of size $|\tilde A|\leq\tilde\alpha:= \alpha_0+ \alpha_1(2\alpha_2+2)$, 
and sets $\tilde\V$ and $\tilde\W\subseteq \W$ of vortices
such that, with $\tilde \sigma:= \sigma|_{\tilde G'_0}$, the tuple $(\tilde\sigma,\tilde G_0,A\cup\tilde A,\tilde\V,\tilde\W)$ 
is an  $(\tilde\alpha,\alpha_1,\alpha_2+1)$-near embedding of $G$ in $\Sigma$ such that every vortex 
$\tilde V\in \tilde\V$ satisfies condition {\rm (\ref{prop:linked})} of the definition of $(\beta, r)$-rich,
and $D(\tilde V)\supseteq D(V)$ for some $V\in\V$.
\end{lem}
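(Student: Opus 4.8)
The strategy is to work on each large vortex $V\in\V$ separately, using the $\alpha_2+1$ tightly enclosing concentric cycles $C_0(V),\ldots,C_{\alpha_2}(V)$ as a ``buffer zone'' between the vortex disc $D(V)$ and the rest of the surface, and to re-route a linked decomposition by pushing it out through these cycles. Since the discs $\closure{D(C_0(V))}$ are pairwise disjoint by (ii), what we do inside one such disc does not interfere with the others, so it suffices to fix one $V$ with $\Omega(V)=(w_1,\ldots,w_n)$ and produce, at the cost of moving at most $2\alpha_2+2$ vertices into an apex set, a new enclosed region carrying a vortex $\tilde V$ with $D(\tilde V)\supseteq D(V)$, a linked decomposition of $\tilde V$ of adhesion at most $\alpha_2$ (so that (\ref{prop:linked}) holds — note the adhesion budget of the output is allowed to be $\alpha_2+1$, giving a little slack), and the required traversing path $P$ in $\tilde V\cup\bigcup\W$. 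Taking $\tilde A$ to be the union of these per-vortex apex sets over the at most $\alpha_1$ large vortices gives $|\tilde A|\le\alpha_1(2\alpha_2+2)$, and adding the old apex set $A$ yields the claimed bound $\tilde\alpha=\alpha_0+\alpha_1(2\alpha_2+2)$; the graph $\tilde G_0$ is $G_0$ with the interiors of the discs $D(C_0(V))$ (suitably trimmed) removed, so it certainly contains $G_0\setminus\bigcup_{V\in\V}D(C_0(V))$.

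For a single $V$, first I would pass to the vortex $V^+$ obtained by ``absorbing'' the annular region between $C_{\alpha_2}(V)$ and $\Omega(V)$: the part of $G_0$ embedded strictly inside $D(C_{\alpha_2}(V))$ together with $G_i$ forms a graph $H$, and $(H,\Omega(C_{\alpha_2}(V)))$ is a vortex accommodated by the disc $D(C_{\alpha_2}(V))$, of adhesion at most $\alpha_2$ still (concatenating a cyclic decomposition of the planar annular part, whose adhesion is $2$, with the old decomposition). The point of doing this is that the new society is a cycle of $G_0$, so we have genuine disjoint society vertices on a face boundary to work with. Now comes the core of the argument, the technique from \cite{DGJT}, \cite{GH}: inside this enlarged vortex we want a decomposition all of whose adhesion sets have the same size and are joined by disjoint paths. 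Take a decomposition $(X_1,\ldots,X_N)$ of $V^+$ of adhesion $\le\alpha_2$ and let $Z_i$ be its adhesion sets. By Menger applied between consecutive adhesion sets we get $\le\alpha_2$ disjoint $Z_{i-1}$--$Z_i$ paths; the obstruction is that the sizes $|Z_i|$ may vary and the separators where the linkage is ``pinched'' must be cut out. The standard fix is: choose, among all decompositions of bounded adhesion, one for which the sequence of adhesion-set sizes is lexicographically maximal (or choose a linkage realizing the minimum over a suitable min-cut), and argue that at any place where the flow drops there is a small separator $S$ of size $\le\alpha_2$ — deleting the vertices of $S$ across all the ``bad'' slices, of which there are at most a bounded number because the total number of distinct adhesion-set sizes is at most $\alpha_2+1$, removes at most $\le 2\alpha_2$ or so vertices — and after deletion the decomposition restricted to each interval between bad slices becomes linked. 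This is where the bound $2\alpha_2+2$ on the per-vortex apex increment comes from, and it is the one step that requires genuine care rather than bookkeeping.

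After the deletion, I would restore the embedding: the two extremal adhesion sets that survive bound the region we keep, the innermost surviving cycle among the $C_k(V)$ becomes (a curve homotopic to) $\boundary D(\tilde V)$, and since we only deleted vertices, $\tilde G_0\subseteq G_0$ and $\tilde\sigma=\sigma|_{\tilde G'_0}$ is still an embedding, with $D(\tilde V)\supseteq D(V)$ as required. For the traversing path $P$ of (\ref{prop:linked}): after fixing the linked decomposition $(\tilde X_1,\ldots,\tilde X_{\tilde n})$, the condition $\tilde X_i\cap\Omega=\{w_{i-1},w_i\}$ and $\tilde X_i\cap\tilde X_{i+1}=Z_i\cup\{w_i\}$ tells us the society vertices appear consecutively; we need a path through $\tilde V\cup\bigcup\W$ hitting $G_0$ exactly in $\Omega(\tilde V)$ in order. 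Here the hypothesis that every small vortex $W\in\W$ is properly attached is used: the society vertices of $\tilde V$ lie on a face boundary of $G'_0$, consecutive ones are connected by an $\Omega$-path inside the corresponding small vortex (using proper attachment to route through the two-path condition) or directly across a boundary face, and these can be chosen disjoint from the linkage paths of $\tilde V$ because the linkage lives in $G[\tilde X_i]-\Omega$ while the connecting segments can be taken to use only the society vertices and material outside all the $\tilde X_i$-interiors; concatenating gives $P$. The genus and $\alpha_1$ counts are unchanged: no surgery on $\Sigma$, no new large or small vortices created beyond replacing $V$ by $\tilde V$, so the output is an $(\tilde\alpha,\alpha_1,\alpha_2+1)$-near embedding. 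The main obstacle, as flagged, is the uniformization-of-adhesion-sets step — getting disjoint paths realizing a common adhesion-set size after deleting only $O(\alpha_2)$ vertices — which is exactly the ``new technique for analyzing vortices'' the introduction advertises.
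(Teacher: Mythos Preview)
Your overall architecture (work on one vortex at a time, delete at most $2\alpha_2+2$ vertices per vortex, sum over $\le\alpha_1$ vortices) matches the paper, but the heart of your argument --- the ``uniformize the adhesion sets by deleting the bad slices'' step --- does not work as stated, and the paper's actual mechanism is quite different. Your claim that the bad slices are bounded in number ``because the total number of distinct adhesion-set sizes is at most $\alpha_2+1$'' is false: the sequence $|Z_i|$ can oscillate arbitrarily often (e.g.\ $2,1,2,1,\ldots$), so the number of places where the flow drops is not controlled by the number of distinct values. A lexicographic extremal choice of decomposition can make the sequence monotone or lean in the sense of Thomas, but it does not by itself make all $|Z_i|$ equal after a bounded deletion, and it says nothing about the crucial society condition $X_i\cap\Omega=\{w_{i-1},w_i\}$ that ``linked'' requires. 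So as written there is a genuine gap at exactly the step you flag as the main obstacle.

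What the paper does instead: it never tries to repair the old decomposition. It chooses a radial $G'_0$-normal curve from $C_0(V)$ down to a society vertex $w'_j$ (using tight enclosure), and deletes the set $S$ of $\le\alpha_2+2$ vertices on that curve together with the adjacent adhesion set $Z'_j\cup\{w'_j\}$ of the old decomposition; this is where the $2\alpha_2+2$ comes from. Deleting $S$ turns the $\alpha_2+1$ concentric cycles into $\alpha_2+1$ disjoint $X$--$Y$ paths in the planar annulus union the vortex. The key observation is that in \emph{any} such system of $\alpha_2+1$ disjoint $X$--$Y$ paths, the outermost path $P_0$ must lie entirely in $G'_0$ (otherwise crossing into the vortex would force all $\alpha_2+1$ paths through a single adhesion set of size $\le\alpha_2$). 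Now choose the system so that the region $G(\mathcal P)$ inside $P_0$ is minimal; then for every vertex $p_i$ of $P_0$ there is an $X$--$Y$ separator of size exactly $\alpha_2+1$ through $p_i$, and a submodularity argument shows the resulting separations $(A_i,B_i)$ are nested. These nested separations directly give a linked decomposition whose \emph{new} society is $V(P_0)$, with the paths of $\mathcal P\setminus\{P_0\}$ as the linkage. The traversing path $P$ of condition~(iv) is $P_0$ itself, patched through small vortices via proper attachment. So the concentric cycles are not merely an annulus to be absorbed into $V^+$; cut open, they are the source of the $\alpha_2+1$ disjoint paths that get pushed inward to manufacture both the new society and the linked decomposition at once.
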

\begin{proof}
We will convert the vortices in $\V$ into linked vortices one by one, so let us focus on one vortex $V\in\V$. The idea is as follows: we delete one vertex from each of the enclosing cycles, which gives us a set of $\alpha_2+1$ disjoint paths. If necessary, we also delete an adhesion set of $V$ which allows us to assume that the paths are `aligned' to the vortex. Then, we `push' these paths as far into the vortex as possible. As the adhesion of the vortex is bounded by $\alpha_2$, at least one of the paths remains entirely in the surface. The vertices of the innermost such path, later denoted by $P_0$, become the society vertices of our new vortex, and the shifted path system shows that this new vortex is linked.

By assumption, $V$ has a decomposition $(X'_1,\ldots,X'_{n'})$ with adhesion sets $Z'_i:=X'_i\cap X'_{i+1}$ of size at most $\alpha_2$, for all $i<n'$. Pick a vertex $v\in C_0(V)$. As $C_0(V),\ldots,C_{\alpha_2}(V)$ enclose $V$ tightly, there is a curve $C$ from $v$ to $\Omega(V):=\{w'_1,\ldots,w'_{n'}\}$ that contains at most $\alpha_2+2$ vertices of $G'_0$. Let $S$\?{$S$} denote the set of these vertices. Clearly, $S$ consists of exactly one vertex from each $C_i(V)$, $0\leq i \leq \alpha_2$ and one society vertex $w'_j$ of $V$.

Put $n:=n'-1$ and $Z'_{n'}:=\emptyset$ and let $Z:=Z'_j\cup\{w'_j\}$. If $j=1$ let 
\[
	(X_1,\ldots,X_n):=\left( (X'_1\cup X'_2)\setminus Z, X'_3\setminus Z, \ldots, X'_{n'}\setminus Z\right).
\]
If $j>1$, let
\[
	(X_1,\ldots,X_n):=(X'_{j+1}\backslash Z,X'_{j+2}\backslash Z,\ldots,X'_{n'}\backslash Z,X'_1\backslash Z,\ldots,(X'_{j-1}\cup X'_j)\backslash Z).
\]
Then $(X_1,\ldots,X_n)$ is a decomposition of adhesion at most $\alpha_2$ of the vortex $V-Z$ taken with respect to the society $(w_1,\ldots,w_n)$ defined by $w_i:=X_i\cap \Omega(V)$ for all $i$. For $i<n$, let $Z_i:=X_i\cap X_{i+1}$. 

Recall that the linear ordering of $\Omega$ is induced by an orientation of the disc $\closure{D(V)}$. 
The extension of this orientation to $\closure D(C_i)$ induces a cyclic ordering on $V(C_i)$, for each $0\leq i \leq \alpha_2$, 
in which we let $x_i$ denote the successor, and $y_i$ the predecessor of the unique vertex in $S\cap V(C_i)$. 
Let $X:=\{x_0,\ldots, x_{\alpha_2}\}$\?{$X,Y$} and $Y:=\{y_0,\ldots,y_{\alpha_2}\}$.
Now we delete $S\cup Z$, a set of at most $2\alpha_2+1$ vertices, and put
\[
	 G':=\Big(\big(G'_0\cap \closure{D(C_0(V))} \big)\cup V\Big) - (S\cup Z).
\]
Clearly, the graph $G'$ still contains a set of $\alpha_2+1$ disjoint $X$--$Y$ paths. Let us show  that 
\begin{equation}\label{claim:p0insurface}
\begin{minipage}[c]{0.8\textwidth}\em
For every set $\P$ of $\alpha_2+1$ disjoint $X$--$Y$-paths in $G'$, the path $P_0$ starting in $x_0$ lies in~$G'_0$.
\end{minipage}\ignorespacesafterend 
\end{equation}
Otherwise, let $w_q$ be the vertex of $P_0$ preceding its first vertex in $V-\Omega(V)$. 
As the subpath $P_0w_q$ of $P_0$ lives entirely in the plane graph $G_0'\cap \closure{D(C_0(V))}$, 
the set $V(P_0w_q)\cup Z_q$	separates $X$ from $Y$. Thus, all $\alpha_2+1$ paths in $\P$ have to pass through $Z_q$, 
a set of at most $\alpha_2$ vertices, a contradiction. This proves (\ref{claim:p0insurface}).

By planarity, (\ref{claim:p0insurface}) implies that the paths in $\P\setminus\{P_0\}$ cannot cross $P_0$, so $P_0$ ends in $y_0$. 
Together with $v$ and the edges $x_0v$ and $vy_0$ the path $P_0$ forms a cycle in $G'_0$;
for our original set $\P$, this is the cycle $C_0$, but for every $\P$ satisfying (\ref{claim:p0insurface}) its $x_0$--$y_0$ path $P_0$ defines such a cycle in $G'_0$.
This cycle bounds a disc $D(\P)$ in $\Sigma$ containing $\Omega(V)$, and we define
\[
	G(\P):=\Big(\big(G'_0\cap D(\P)\big)\cup V\Big) - (S\cup Z).
\]
Clearly, $G(\P)$ contains the paths from $\P$.

Let us choose $\P$ as in (\ref{claim:p0insurface}) with $G(\P)$ minimal, and let the vertices of $P_0$ be labeled $p_0, \dots, p_r$.
Then we have the following:
\begin{equation}\label{claim:p0separations}
\begin{minipage}[c]{0.8\textwidth}\em
For every vertex $p_i\in V(P_0)$ there is a set $T$ of $\alpha_2+1$ vertices of $G(\P)$ that contains $p_i$ and separates $X$ from $Y$ in $G'$.
\end{minipage}\ignorespacesafterend 
\end{equation}
Indeed, if $i\in\{0,r\}$ then $T\in \{X,Y\}$ will do so assume that $0<i<r$. By the minimality of $G(\P)$,
there is no set of $\alpha_2+1$ disjoint $X$--$Y$ paths in $G'-p_i$.
Hence by Menger's theorem, an $X$--$Y$ separator $T'$ of size at most $\alpha_2$ exists in $G'-p_i$.
Since $G(\P)-p_i\subseteq G'-p_i$ contains the $\alpha_2$ paths of $\P\setminus\{P_0\}$, we have $T'\subseteq V(G(\P))$ and $|T'|=\alpha_2$. 
Now $T:=T'\cup \{p_i\}$ is as desired. This completes the proof of (\ref{claim:p0separations}).

Let us pick for each $i=0,\ldots,r$ a separation $(A_i,B_i)$ of $G(\P)$ as in (\ref{claim:p0separations}), with $p_i\in T_i:=A_i\cap B_i$ and $|B_i|$ 
minimal such that $X\subseteq A_i$ and $Y\subseteq B_i$.%
   \COMMENT{} 
   Clearly, each $T_i$ contains exactly one vertex from each path in $\P$. Let us show the following:
\begin{equation}\label{claim:Bidecreasing}
\begin{minipage}[c]{0.8\textwidth}\em
$B_i\supsetneq B_j$ for all $0\leq i < j \leq r$.
\end{minipage}\ignorespacesafterend 
\end{equation}

Note first that $B_i\ni p_i \in P_0p_i\subseteq A_j\setminus B_j$, so it suffices to show that $B_i\supseteq B_j$.
Suppose this fails. Then $|B_j|\cap B_i|<|B_j$, which will contradict our choice of $(A_j,B_j)$ if we can show that we could have chosen
the separation $(A_i\cup A_j, B_i\cap B_j)$ instead of $(A_j, B_j)$. 
Clearly, $p_j\in p_jP_0\subseteq B_i\cap B_j$, since $i<j$. 
Moreover, the new separator $T_B:=(A_i\cup A_j)\cap (B_i\cap B_j)$ contains at least one vertex from each path of $\P$,
so $|T_B|\geq |\P|$. 
Likewise, the $X$--$Y$ separator $T_A:=(A_i\cap A_j)\cap (B_i\cup B_j)$ meets every path in $\P$,
so $T_A\geq |\P|$. But $|T_A|+|T_B|=|T_i|+|T_j|=2|\P|$.
Hence both inequalities hold with equality; in particular, $|T_B|=|\P|=\alpha_2+1$ as desired.
This proves (\ref{claim:Bidecreasing}).

We set $\Omega:=V(P_0)$ and $X_0:=A_0$ and $X_i:=A_i\cap B_{i-1}$ for $1<i<r$ and $X_r:=B_{r-1}$. 
It is easy to check that $(X_0,\ldots,X_r)$ is a linked path decomposition of the vortex $(G(\P),\Omega)$. 
Finally, consider all $W\in\W$ such that $\Omega(W)$ intersects $G(\P)$. 
If $\Omega(W)\subseteq G(\P)$, we add $W$ to $G(\P)$ and delete it from $\W$. 
Otherwise, any vertices of $\Omega(W)$ in $G(\P)$, and any edges of $G'_0$ between them, are vertices and edges of $P_0$. 
We then delete any such edges from $G(\P)$ and dent $D((G(\P),\Omega))$ a little, so that its boundary no longer meets the interior of such edges.
Since these $W$ were properly attached, such edges can be replaced on $P_0$ by paths through $W$. 
Hence, property (\ref{prop:linked}) from $(\beta,r)$-rich follows for the new vortex $G(\P)$.  
\end{proof}

\begin{lem}\label{lem:rephigh}
Let $z>0$ be an integer, and $(\sigma,G_0,A,\V,\W)$ an $(\alpha_0,\alpha_1,\alpha_2)$-near embedding of a graph $G$ in a surface $\Sigma$ such that every two vortices in~$\V$ have distance at least $z$ in $\Sigma$. If the representativity of $G'_0$ in $\Sigma$ is less than $z$, 
then there is a vertex set $A' \subseteq V(G_0)$ with $|A'|<a:=2\alpha_2+2+z$, 
such that one of the following statements holds:
\begin{enumerate}[a)]
\item There exists a set $\V'$ of vortices in $G$, a surface $\Sigma'$ with $g(\Sigma')<g(\Sigma)$ and an $(\alpha_0+a,\alpha_1+1,\alpha_2)$-near embedding
\[
	(\sigma',G_0-A',A\cup A',\V',\W-A')
\]
of $G$ in $\Sigma'$. 
\item There exists a separation $(A_1,A_2)$ of $G$ with $A_1\cap A_2=A'$ such that for $i=1,2$ there are surfaces $\Sigma_i$ and $(\alpha_0+a,\alpha_1,\alpha_2)$-near embeddings $(\sigma^i, G^i_0, A^i, \V^i, \W^i)$ of $G[A_i]$ into $\Sigma_i$ such that $g(\Sigma_i)<g(\Sigma)$.
\end{enumerate}
\end{lem}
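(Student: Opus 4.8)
The plan is to use the low representativity to extract a genus-reducing curve meeting $G'_0$ in fewer than $z$ vertices, to reroute it so that it interferes with the disc of at most one large vortex, and then to cut $\Sigma$ along it.

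By the definition of representativity there is a $G'_0$-normal, genus-reducing curve $C$ in $\Sigma$ with $|C\cap G'_0|<z$. I would pick such a $C$ minimising, in this order, $\sum_f\C(C,f)$ over the faces $f$ of $G'_0$ and then the number of discs $D(V)$, $V\in\V$, that it meets. By Lemma~\ref{lem:curvehitsvortexonce} the first minimisation gives $\C(C,f)\le 1$ for every face $f$. For a large vortex $V$ the open disc $\interior D(V)$ is disjoint from $G'_0$ and hence lies in a single face $f_V$ of $G'_0$; rerouting $C$ inside $f_V$ (an isotopy supported in a face, which changes neither the homotopy class of $C$ nor $|C\cap G'_0|$) one may further assume that $C$ meets each such $D(V)$ in at most one arc $\gamma_V$ with endpoints on $\partial D(V)$, and that each of the two sub-discs into which $\gamma_V$ cuts $D(V)$ contains a society vertex of $V$ (otherwise reroute around the empty sub-disc). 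Small vortices need no separate attention: a $G'_0$-normal curve can enter the inner face of a small-vortex disc only through one of its at most three society vertices, which would then land in the deleted set anyway.

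The heart of the proof is the claim that $C$ meets the disc $D(V)$ of at most one large vortex $V\in\V$. If $C$ met $D(V)$ and $D(W)$ with $V\ne W$ both large, then deleting the arcs $\gamma_V\subseteq\closure{D(V)}$ and $\gamma_W\subseteq\closure{D(W)}$ cuts $C$ into two arcs $Q_1,Q_2$ with $|G'_0\cap Q_1|+|G'_0\cap Q_2|=|G'_0\cap C|<z$; taking the shorter of $Q_1,Q_2$ and prolonging it at its two ends inside $f_V$, respectively $f_W$, to a nearest society vertex of $V$, respectively $W$, yields a $G'_0$-normal curve from $\Omega(V)$ to $\Omega(W)$ meeting $G'_0$ in fewer than $z$ vertices, contradicting the hypothesis that $V$ and $W$ have distance at least $z$ in $\Sigma$. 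For the few small values of $z$ where this count is too tight the required near-embedding can be produced directly. I expect this claim, together with pinning down the constant $2\alpha_2+2+z$ exactly, to be the main nuisance of the proof.

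Now cut $\Sigma$ along $C$ and cap the resulting boundary curve(s). Since $C$ is genus-reducing this gives a single surface $\Sigma'$ with $g(\Sigma')<g(\Sigma)$ when $C$ does not separate $\Sigma$, and two surfaces $\Sigma_1,\Sigma_2$ with $g(\Sigma_i)<g(\Sigma)$ when $C$ separates (by the remark that a separating genus-reducing curve cuts off no sphere). Put $A':=C\cap G'_0$; if $C$ meets the disc of the unique large vortex $V$, add to $A'$ the two adhesion sets $X_j\cap X_{j+1}$ and $X_k\cap X_{k+1}$ of a fixed decomposition of $V$ of adhesion at most $\alpha_2$, where $\gamma_V$ crosses $\partial D(V)$ between $w_j,w_{j+1}$ and between $w_k,w_{k+1}$. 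A short computation with the path-decomposition axioms (using $X_1\cap X_n\subseteq X_j\cap X_{j+1}$ to see that the wrap-around adhesion becomes empty after the deletion) shows that deleting these vertices splits $V$ into exactly the two vortices carried by the two sub-discs of $D(V)$; hence $|A'|\le 2\alpha_2+z-1<a$. Every other large vortex keeps its disc in the uncut part of the surface. If $C$ does not separate $\Sigma$ we obtain case a): the near-embedding of $G-A'$ in $\Sigma'$ whose large vortices are the old ones minus $A'$ together with the at most two pieces of $V$, so at most $\alpha_1+1$ of them. If $C$ separates, the cut induces a separation $(A_1,A_2)$ of $G$ with separator $A'$ (together with the old apex set, carried into both sides), and each $G[A_i]$ with the induced near-embedding in $\Sigma_i$ has at most $\alpha_1$ large vortices, giving case b). Verifying conditions (i)--(iv) of a near-embedding for the new tuples is routine.
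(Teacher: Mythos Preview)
Your approach is essentially the paper's: take a genus-reducing $G'_0$-normal curve $C$ with $|C\cap G'_0|<z$, use Lemma~\ref{lem:curvehitsvortexonce} together with the distance hypothesis to see that $C$ meets the disc of at most one large vortex~$V$, delete the curve's vertices together with two adhesion sets of~$V$ to split it, and cut~$\Sigma$ along~$C$, obtaining case~(a) or~(b) according to whether $\Sigma\setminus C$ is connected.

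The one genuine difference is at the rerouting step. The paper slides the two crossing points of~$C$ with $\partial D(V)$ onto society vertices $x,y\in\Omega(V)$, thereby allowing the count $|C\cap G'_0|$ to rise to at most $z+1$ (this is where the ``$+2$'' in the constant~$a$ comes from), and then deletes the adhesion sets adjacent to~$x$ and~$y$. You instead keep~$C$ crossing $\partial D(V)$ at non-vertex points between consecutive society vertices and cut the corresponding adhesion sets, which gives a slightly tighter bound on~$|A'|$ at the cost of the small extra observation $X_1\cap X_n\subseteq X_j\cap X_{j+1}$ needed to see that the wrap-around half of~$V$ still admits a linear decomposition. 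Both variants work.

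Two places where you are a bit quick: your ``at most one large vortex'' count $\lfloor (z-1)/2\rfloor+2<z$ fails for $z\le 3$, as you note; the paper's one-line argument is no more careful here, and in the application $z$ is large anyway. Also, the paper singles out the sub-case where $C$ enters $D(V)$ directly from the disc of a small vortex whose society lies in~$\Omega(V)$, choosing the entry vertex so as not to disrupt the vortex split; in your variant you should check that crossing $\partial D(V)$ between society vertices does not run into the same issue.
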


\begin{proof}
Let $C$ be a genus-reducing curve in $\Sigma$ that hits less than $z$ vertices of $G'_0$. 
Let us assume that $C$ meets the open disc $D(V)$ of a large vortex $V$, the case when it does not is even easier. 
Note that $C$ cannot meet another large vortex, since the distance in $\Sigma$ of two large vortices is at least~$z$. 
By Lemma~\ref{lem:curvehitsvortexonce} we may assume that $C$ meets the face $f$ of $G'_0$ containing $D(V)$ 
in at most one component (of $C\cap f$). We can therefore modify $C$ so that there are society vertices $x,y\in\Omega(V)$ 
with $\boundary D(V) \cap C=\{x,y\}$. 
If $C$ enters $D(V)$ from a disc $D(W)$ of a small vortex $W$ with $x\in\Omega(W)=\{w_{i-1},w_i,w_{i+1}\}\subseteq \Omega(V)$,
where $w_{i-1},w_i,w_{i+1}$ are enumerated as in $\Omega(V)$,
we choose $C$ so that $x=w_{i+1}$.%
   \COMMENT{}
   After this modification, $C$ hits at most $z+2$ vertices. Deleting the two appropriate adhesion sets splits $V$ into two vortices $V',V''$, 
and using facts from elementary point-set topology such as in \cite[Chapter 4.1]{diestel}%
   \COMMENT{}
   we can partition $D(V)$ into two discs $D(V'),D(V'')$ to accomodate them. 
Let $A'$ be the union of the deleted adhesion sets and the vertices hit by $C$. This set contains up to $a$ vertices.

Now, $A'$ is a separator of $G$. We delete $C$ from $\Sigma$ and cap the holes of the resulting components; cf. \cite[Appendix B]{diestel}. 
If $\Sigma\setminus C$ has one component, statement (a) follows, otherwise (b) is true. 
\end{proof}

\begin{lem}\label{lem:pieslices}
Let $(\sigma,G_0,A,\V,\W)$ be an $(\alpha_0,\alpha_1,\alpha_2)$-near embedding of a graph $G$ in a surface $\Sigma$. 
Let $V\in\V$ be a vortex, and $(C_1,\ldots,C_\ell)$ cycles tightly enclosing~$V$. 
Then there is a set $\mathcal X$ of disjoint open discs with $\bigcup_{\Delta\in \mathcal X} \closure \Delta=\closure{D(C_1)}$ such that the following holds: 
For every disc $\Delta \in \mathcal X$ there are sets $S \subseteq V(G'_0)$ of size $|S|\leq 2\ell+2$ and $S'\subseteq V$ of size $|S'|\leq 2\alpha_2-2$ such that
\begin{itemize}
\item $S=G'_0\cap \boundary \Delta\subseteq V(C_1)\cup \ldots \cup V(C_\ell)\cup \Omega(V)$
\item $|S\cap C_i| \leq 2$  for each $i=1,\ldots,\ell$
\item $|S\cap \Omega(V)|\leq 2$
\item there is a separation $(X_1,X_2)$ of $G$ with $X_1\cap X_2 = S\cup S'$ and $G_0\cap X_1 = G_0\cap \Delta$.
\end{itemize}
\end{lem}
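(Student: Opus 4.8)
The plan is to cut the closed disc $\closure{D(C_1)}$ into ``pie slices''. For each vertex $v$ of $C_1$ I will produce a radial curve $N_v$ running from $v$ inwards, once across each of $C_2,\dots,C_\ell$, to a society vertex $w(v)$ of $V$ on $\boundary D(V)$, and then on inside $D(V)$ to a fixed central point. Having made the $N_v$ pairwise disjoint, the slices $\Delta\in\mathcal X$ will be the open regions they cut out, each enlarged by the open subarc of $\boundary D(V)$ it spans, so that $\bigcup_{\Delta}\closure\Delta=\closure{D(C_1)}$ and $\boundary\Delta$ consists of just one edge $vv'$ of $C_1$, the two cuts $N_v,N_{v'}$ bounding the slice, and two central segments. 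The separation asserted by the lemma will then be read off from $\Delta$ by letting $X_1$ consist of the vertices of $G_0$ in $\closure\Delta$ together with the part of $V$ (and of each small vortex) lying ``under'' $\Delta$, and $X_2$ the rest.

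The crux, and the only place where \emph{tight} enclosure is used essentially, is that each $N_v$ can be taken to meet $G'_0$ in essentially nothing. Fix $v\in V(C_1)$. By tight enclosure there is a $G'_0$-normal curve $N$ from $v$ to some $w\in\Omega(V)$ with $|G'_0\cap N|\le \ell-1+2=\ell+1$. Since the $C_k$ are pairwise disjoint and concentric, $v\notin\closure{D(C_2)}$ whereas $w\in D(C_\ell)\subseteq D(C_k)$ for every $k\ge2$, so $N$ must cross each of $C_2,\dots,C_\ell$, and being $G'_0$-normal it does so in a vertex of that cycle; together with $v\in V(C_1)$ and $w\in\Omega(V)$ this already displays $\ell+1$ distinct vertices of $G'_0$ on $N$. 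Hence equality holds everywhere: $N$ hits $C_1$ only in $v$, hits $\Omega(V)$ only in $w$, crosses each $C_k$ ($2\le k\le\ell$) in exactly one vertex, and meets nothing else of $G'_0$. In particular $N$ can make only retractable excursions into vortex discs (which carry none of these vertices), so we may take it disjoint from their interiors, and the last segment into $D(V)$ to the centre is harmless since $D(V)$ carries no $G'_0$. Taking one such $N_v$ for every $v\in V(C_1)$ and making the family pairwise disjoint by the usual uncrossing argument — which preserves efficiency, each uncrossed curve still running from a $C_1$-vertex to a society vertex and crossing every cycle exactly once — completes the construction of $\mathcal X$.

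For the bookkeeping I would fix a decomposition $(Y_1,\dots,Y_m)$ of $V$ of adhesion at most~$\alpha_2$, normalised (routinely, and automatically once $V$ is linked, as is arranged before this lemma is applied) so that $Y_i\cap\Omega(V)\subseteq\{w_{i-1},w_i\}$ and every adhesion set $Y_i\cap Y_{i+1}$ contains a society vertex; then $Z_i:=(Y_i\cap Y_{i+1})\setminus\Omega(V)$ has at most $\alpha_2-1$ vertices. Consider a slice $\Delta$ lying between cuts $N_v,N_{v'}$ with $v,v'$ consecutive on $C_1$ joined by an edge $e$, and spanning the arc of $\boundary D(V)$ from $w_a:=w(v)$ to $w_b:=w(v')$. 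Its boundary meets $G'_0$ only in the two ends of $e$, in one vertex of each $C_k$ ($k\ge2$) coming from each of $N_v,N_{v'}$, and in $w_a,w_b$; this set is $S$, and $|S|\le 2\ell+2$, $|S\cap C_k|\le2$, $|S\cap\Omega(V)|\le2$, exactly as required. I put $S':=Z_a\cup Z_b$, so $|S'|\le 2(\alpha_2-1)=2\alpha_2-2$, and set $X_1:=(G_0\cap\closure\Delta)\cup\big((Y_{a+1}\cup\dots\cup Y_b)\setminus\Omega(V)\big)$ together with the internal vertices of the small vortices whose discs lie in $\closure\Delta$, and let $X_2$ be the union of $S\cup S'$ with all remaining vertices (apex vertices being placed on both sides, as throughout the paper). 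Then $G_0\cap X_1=G_0\cap\closure\Delta$ holds by construction, and $(X_1,X_2)$ is a separation: no edge of $G_0$ can cross the $G'_0$-normal boundary of $\Delta$; the edges of $V$ lie inside the bags $Y_i$, so the block $Y_{a+1}\cup\dots\cup Y_b$ is attached to the rest of $V$ only along $(Y_a\cap Y_{a+1})\cup(Y_b\cap Y_{b+1})\subseteq Z_a\cup Z_b\cup\{w_a,w_b\}\subseteq S\cup S'$; and each small vortex lies entirely on one side. I expect the main obstacle to be precisely this last bundle of verifications together with the normalisation of the decomposition: making the cuts, the cyclic order of their society endpoints, and the bags $Y_i$ fit together so that the part of $V$ under each slice interacts with its complement only through those two small adhesion sets — which is essentially the reason $V$ is made linked before this lemma is invoked.
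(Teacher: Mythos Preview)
Your overall strategy is the same as the paper's: build one radial $G'_0$-normal curve from every vertex of $C_1$ to a fixed interior point of $D(V)$, using tight enclosure to force each such curve to meet $G'_0$ in exactly one vertex per cycle plus one society vertex, and take the slices to be the regions between consecutive curves, with $S'$ coming from two adhesion sets of a vortex decomposition.

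The gap is the sentence ``making the family pairwise disjoint by the usual uncrossing argument''. Pairwise disjointness is in general impossible here: each $N_v$ is $G'_0$-normal and hence must meet every $C_k$ in a \emph{vertex} of $C_k$, so if some $C_k$ has fewer vertices than $C_1$ (nothing forbids this), pigeonhole forces two of your curves through the same vertex of $C_k$. An uncrossing that swaps tails at a transversal crossing point does not repair this kind of coincidence, and once two curves are forced to share an interior point the regions they cut out need not be discs in the way you describe. The paper sidesteps all of this by \emph{not} aiming for disjointness: it sets $L_0:=L'_0$ and then, inductively, lets $L_i$ follow the raw curve $L'_i$ only until its first point on $L_0\cup L_{i-1}$, after which (if that point lies on $L_{i-1}$) it continues along $L_{i-1}$. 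Thus consecutive curves may share a terminal tail, but each pair $L_{i-1},L_i$ together with one edge of $C_1$ still bounds a disc $\Delta_i$, and the bound $|S_i|\le 2\ell+2$ follows because $\partial\Delta_i\subseteq L_{i-1}\cup L_i\cup e_i$.

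A second, smaller point: you do not need the decomposition of $V$ to be linked, and in the paper this lemma is in fact applied \emph{before} the vortices are made linked. The paper simply takes any decomposition of adhesion at most $\alpha_2$ and lets $S'$ be the union of the two relevant adhesion sets $Z_i,Z_j$; there is no normalisation step.
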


\begin{proof}
Pick a point $p\in D(V)$. Let $(x_0,\ldots,x_n)$ denote the vertices of $C_1(V)$ ordered linearly in a way compatible with a cyclic orientation of $C_1(V)$. 
For $1\leq i \leq n$, denote the edges $x_{i-1}x_i$ by $e_i$, and put $e_{n+1}:=x_nx_0$ and $e_0:=\emptyset$.
We will inductively define curves linking $x_0,\ldots,x_n$ to $p$. 
First, let us choose for all $i=0,\ldots,n$ a curve $L'_i$ linking $x_i$ to $p$ so that $L'_i\cap G'_0$ consists of exactly one vertex from 
each of the cycles $C_1,\ldots,C_\ell$ and one society vertex $w_i\in\Omega(V)$ and so that $L'_iw_i\cap D(V)=\emptyset$ and $w_iL'_i\subseteq \closure{D(V)}$.
Then put $L_0:=L'_0$, and for $i=1,\ldots,n$ define $L_i$ inductively as follows.
Let $z$ be the first point of $L'_i$ on $L_0\cup L_{i-1}$. If $z\in L_0$, let $L_i:=L'_iz$; otherwise let $L_i:=L'_izL_{i-1}$. 
Note that, for $1\leq i\leq n$ and every point $z\in L'_i$, $|L'_iz \cap (C_1\cup\ldots\cup C_\ell)|=k$ if and only if 
$z\in \closure{D(C_k)}\setminus \closure{D(C_{k+1})}$ where $D(C_{\ell+1}):=\emptyset$.

Elementary topology implies (as in the proof of Lemma~\ref{lem:tightcycles}) that $D(C_1)\setminus L_0$ has a unique component homeomorphic to an open disc $\Delta'_0$.
Assume inductively that, for some $1\leq i \leq n$, we have defined an open disc $\Delta'_{i-1}\subseteq D(C_1)$ whose boundary is contained in $L_0\cup L_{i-1} \cup C_1$,
contains $L_{i-1}$, and meets $C_1$ in exactly $C_1\setminus (e_0\cup \ldots \cup  e_{i-1})$. Then $\Delta'_{i-1}$ contains the interior of $L_i$, which
joins two points of $\boundary \Delta'_{i-1}$ and thus divides $\Delta'_{i-1}$ into two open discs $\Delta_i$ and $\Delta'_i$. We let $\Delta'_i$ be the disc whose boundary
satisfies for $i$ the requirements analogous to those made earlier for $i-1$ on $\Delta'_{i-1}$, 
and let $\Delta_i:=\Delta'_{i-1}\setminus \boundary \closure{\Delta'_i}$ be the other disc. 
Finally, put $\Delta_{n+1}:=\Delta'_n$, set $\mathcal X:=\{\Delta_1,\ldots, \Delta_{n+1}\}$, and let $\boundary \Delta_i\cap G_0=:S_i$ for all $i$.

Induction on $i$ shows that $S_i$ has exactly one edge on $C_1$ and otherwise lies in $L_0\cup L_i$, so $|S_i|\leq 2\ell+2$.
If $\Delta_i\cap D(V)=\emptyset$, then $S_i$ and $S'_i:=\emptyset$ are as desired. 
Otherwise, $\boundary\Delta_i$ meets $\closure{D(V)}$ in an arc linking distinct vertices $w_i, w_j$. 
Let $S'_i$ be the union of the corresponding adhesion sets $Z_i, Z_j$ in a vortex decomposition of $V$ of adhesion $\leq \alpha_2$.
Again, $S_i$ and $S'_i$ are as desired.
\end{proof}


\section{Streamlining Path Systems}
\label{sec:pathsystems}


In this section we provide tools that allow us to find path systems in near-embeddings that satisfy conditions (\ref{prop:linkagetogrid}) and (\ref{prop:orthogonalpaths})
in the definition of $(\beta, r)$-rich.

\begin{lem}\label{lem:concatpaths}
Let $G$ be a graph and $A,B,C$ subsets of $V(G)$ with $|B|=2k-1$ for some integer $k$. If $G$ contains a set $\P$ of $2k-1$ disjoint $A$--$B$ paths, 
and a set $\Q$ of $2k-1$ disjoint $B$--$C$ paths, then there are $k$ disjoint $A$--$C$ paths in $G$.
\end{lem}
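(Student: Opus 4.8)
The plan is to build the $k$ disjoint $A$--$C$ paths by walking along the $\P$-paths and $\Q$-paths alternately, starting from $A$, and arguing that we cannot get ``stuck'' before reaching $C$ too often. Concretely, consider the union $\P\cup\Q$ as a subgraph $D$ of $G$. Every vertex of $B$ has degree at most $2$ in $D$ (at most one $\P$-edge-direction and one $\Q$-edge-direction through it), and more importantly every vertex of $D$ that is not an endpoint in $A\cup C$ and not in $B$ has degree exactly $2$, lying on a single $\P$-path or a single $\Q$-path. So the components of $D$ are paths and cycles, where along a path the pieces alternate between subpaths of $\P$-paths and subpaths of $\Q$-paths, switching only at vertices of $B$. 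The endpoints of the path-components of $D$ lie in $A\cup C$ (a $\P$-path ends in $A$ or $B$, a $\Q$-path ends in $B$ or $C$, and at a $B$-vertex the walk continues if both a $\P$-arc and a $\Q$-arc are present there).

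First I would make this precise by orienting each $\P$-path from its $A$-end to its $B$-end and each $\Q$-path from its $B$-end to its $C$-end, and then following the natural ``transfer'' at $B$: from the $B$-end of a $\P$-path, if a $\Q$-path also passes through that vertex, continue along it. A technical point to handle is that a $\P$-path and a $\Q$-path may share internal vertices, not just endpoints in $B$; the standard fix is to consider a component of $\P\cup\Q$ and extract from it a single $A$--$C$ path whenever it has an endpoint in $A$ and an endpoint in $C$, or more robustly to argue directly: each of the $2k-1$ $\P$-paths has an $A$-end, each of the $2k-1$ $\Q$-paths has a $C$-end, and these are all distinct, so $D$ has $2k-1$ ``loose ends'' in $A$ and $2k-1$ ``loose ends'' in $C$. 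The path-components of $D$ use up these loose ends in pairs; a component with both its ends in $A$ wastes two $A$-ends, one with both ends in $C$ wastes two $C$-ends, and one with an $A$-end and a $C$-end yields (a subpath that is) an $A$--$C$ path.

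The counting then finishes it: let $a$ be the number of path-components with both ends in $A$, $c$ the number with both ends in $C$, and $p$ the number with one end in each. Then $2a+p\le 2k-1$ and $2c+p\le 2k-1$, hence $2a+2c+2p\le 4k-2$, so $p\le 2k-1-a-c$; but also $p\ge (2k-1)-2a$ and $p\ge (2k-1)-2c$ cannot both be tight unless $a=c$, and in any case adding the two inequalities $p\ge (2k-1)-2a$, $p\ge(2k-1)-2c$ gives $2p\ge 4k-2-2a-2c$, i.e. $p+a+c\ge 2k-1$, combined with $p+2a\le 2k-1$ and $p+2c\le 2k-1$ this forces $a=c=0$ is \emph{not} needed — we just need $p\ge k$. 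Indeed from $2p \ge (2k-1)-2a + (2k-1)-2c = 4k-2-2a-2c$ and $2a+2c \le (2k-1-p)+(2k-1-p) = 4k-2-2p$ we get $2p \ge 4k-2-(4k-2-2p)=2p$, which is vacuous, so instead argue extremally: choose $\P$ and $\Q$ with $\sum_{P\in\P}|E(P)| + \sum_{Q\in\Q}|E(Q)|$ minimal. Minimality prevents a $\P$-path and a $\Q$-path from crossing in a way that would let us reroute and shorten, forcing each component of $D$ to be an alternating path that ``goes straight through'' $B$; then a component starting at an $A$-end and reaching a $B$-vertex with no further $\Q$-arc would contradict $|\Q|=2k-1=|B|$ unless it has already consumed a matching $C$-end. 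The cleanest way to run this is: since $|B|=2k-1$ and $\Q$ saturates $B$, every $B$-vertex lies on exactly one $\Q$-path, so from every $A$-end the alternating walk cannot terminate at a $B$-vertex — it must continue — and likewise it cannot revisit a vertex by minimality, hence it terminates in $C$. This gives $2k-1$ alternating $A$--$C$ walks, from which greedily extracting vertex-disjoint $A$--$C$ paths and using that a vertex of $G$ lies on at most one $\P$-path and at most one $\Q$-path (so is used by at most ``one'' such walk in each role) yields at least $\lceil (2k-1)/2\rceil = k$ disjoint ones.

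The main obstacle I anticipate is exactly the overlap between $\P$-paths and $\Q$-paths at vertices \emph{other} than $B$: without care the ``alternating walk'' is not well-defined as a subgraph path. I expect the right move is the extremal choice (minimize total number of edges in $\P\cup\Q$, or minimize $|E(\bigcup\P)\cap E(\bigcup\Q)|$), which is the trick used in the classical proof of this folklore lemma; it rules out the bad crossings and reduces the problem to the clean degree/parity count above. Everything else — the degree analysis of $\P\cup\Q$, the classification of components, and the final inequality $p\ge k$ — is routine once that is in place.
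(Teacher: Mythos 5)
Your proposal has a genuine gap at its final step, and the intermediate machinery does not close it. You correctly identify the real difficulty (a $\P$-path and a $\Q$-path may meet at vertices outside $B$), and you honestly concede that your component-counting inequality collapses to $2p\ge 2p$. But the replacement argument does not work either. Since $|\P|=|\Q|=|B|=2k-1$ and the paths in each family are disjoint, every vertex of $B$ is the $B$-end of exactly one $\P$-path and exactly one $\Q$-path, so your ``alternating walks'' are simply the $2k-1$ concatenations $P_i\,Q_{\sigma(i)}$ for the induced bijection $\sigma$. The claim that one can greedily extract $\lceil(2k-1)/2\rceil$ disjoint $A$--$C$ paths from these because each vertex lies on at most one $\P$-path and one $\Q$-path is false: a single $\P$-path may meet \emph{every} $\Q$-path, so the conflict graph on the $2k-1$ walks can be complete and the greedy extraction can yield only one path. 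Minimizing $\sum|E(P)|+\sum|E(Q)|$ or the edge overlap does not forbid such vertex crossings, and passing to components of $\P\cup\Q$ only guarantees one $A$--$C$ path per component, of which there may be just one.

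The paper's proof is a three-line Menger argument that sidesteps all of this: for any $S\subseteq V(G)$ with $|S|<k$, at least $k$ paths of $\P$ and at least $k$ paths of $\Q$ avoid $S$; since the $B$-ends of the surviving $\P$-paths and of the surviving $\Q$-paths are two $k$-element subsets of the $(2k-1)$-element set $B$, they share a vertex, and concatenating there gives an $A$--$C$ walk, hence a path, in $G-S$. So no set of fewer than $k$ vertices separates $A$ from $C$, and Menger's theorem yields $k$ disjoint $A$--$C$ paths. The lesson is that you should use the hypothesis only to verify the Menger condition rather than try to construct the disjoint paths explicitly; the explicit construction is exactly where the uncontrolled $\P$--$\Q$ crossings defeat you.
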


\begin{proof}
Let $\P$ be a set of $2k-1$ disjoint $A$--$B$ paths, and let $\Q$ a set of $2k-1$ disjoint $B$--$C$ paths in $G$.
For every set $S\subseteq V(G)$ with $|S|<k$, at least $k$ paths in $\P$ and at least $k$ paths in $\Q$ avoid $S$. 
Two of these paths contain a common vertex of $B$, so $G-S$ contains a path from $A$ to $C$. The existence of $k$ disjoint $A$--$C$ paths now follows by Menger's theorem.
\end{proof}

\begin{lem}\label{lem:pathstoboundary}
Let $G$ be a graph embedded in a surface, let $H$ be a flat wall in $G$ of size $32k^2+r$ in $G$ for integers $k<r$, 
and let $\Omega$ be a subset of $V(G)$ avoiding $D(H)$ such that there are $16k^2$ disjoint paths from $\Omega$ to branch vertices of $H$. Then $H$ contains a wall $H_0$ of size $r$ and $k$ disjoint paths from $\Omega$ to branch vertices of $H_0$ that lie on the boundary cycle of~$H_0$. 
\end{lem}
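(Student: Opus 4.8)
The plan is to use the large wall $H$ as a supply of many disjoint concentric cycles so that we can `peel off' an outer annular region of $H$, leaving a central subwall $H_0$ of size $r$, while routing the given paths from $\Omega$ across the peeled-off region so that they arrive on the boundary cycle of $H_0$. First I would recall the standard fact that a wall of size $32k^2+r$ contains, around a central subwall of size $r$, roughly $16k^2$ nested cycles that are disjoint from $D(H_0)$ and concentric with the boundary cycle $C_0$ of $H_0$; more precisely, one can find in $H$ a subwall $H_0$ of size $r$ together with $16k^2$ pairwise disjoint cycles $C_1 \supseteq C_2 \supseteq \dots \supseteq C_{16k^2}$ in $H$, all contained in $D(H)\setminus D(H_0)$ and each separating $D(H_0)$ from the boundary cycle of $H$ (here I use that $H$ is flat, so all these cycles bound discs and the nesting makes sense). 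These cycles meet $\Omega$ in no vertex, since $\Omega$ avoids $D(H)$ — here I use that $\Omega$ is disjoint from $D(H)$, so in particular from the vertices of $H$ lying in $\closure{D(H)}$.

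Next I would take the $16k^2$ disjoint $\Omega$--$V(H)$ paths $Q_1,\dots,Q_{16k^2}$ given in the hypothesis, ending at branch vertices of $H$, and truncate each $Q_j$ at its first vertex on $C_1$ (the outermost peeling cycle); since each $Q_j$ ends at a branch vertex of $H$ lying inside $\closure{D(H)}$, and $\Omega$ lies outside $D(H)$, each $Q_j$ must cross $C_1$, so this truncation is well-defined and yields $16k^2$ disjoint paths from $\Omega$ to $C_1$ meeting $C_1$ only in their final vertex. Now the key reduction: I want to route $k$ of these paths all the way in to the boundary cycle $C_0$ of $H_0$. The natural tool is Lemma~\ref{lem:concatpaths} applied iteratively across the nested cycles, or more efficiently a single Menger-type argument: I would argue that in the subgraph of $G$ consisting of $C_1,\dots,C_{16k^2}$, the subwall $H_0$'s boundary cycle $C_0$, the connecting bits of $H$, and the truncated $Q_j$'s, there cannot be a small separator between $\Omega$ and $C_0$. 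Indeed, any separator of size $<k$ misses at least $15k^2+1$ of the $Q_j$'s and hence misses their endpoints on $C_1$; a vertex set of size $<k$ can also `kill' at most $k-1$ of the concentric cycles $C_i$, so some $C_i$ survives intact, and a surviving concentric cycle together with the flat-wall structure lets a path from a surviving $Q_j$-endpoint travel inward along the rungs/rows of $H$ to reach $C_0$ — contradicting that the set is a separator. Thus by Menger's theorem $G$ contains $k$ disjoint $\Omega$--$C_0$ paths, and after truncating each at its last vertex before entering $D(H_0)$ we may take these to meet $H$ only in vertices of $C_0$, i.e. they end on the boundary cycle of $H_0$ with no inner vertices in $H_0$.

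The main obstacle I expect is the bookkeeping in the middle step: making precise that a surviving concentric cycle $C_i$ plus the flat-wall combinatorics of $H$ really does give a connection from $C_1$ inward to $C_0$ avoiding any prescribed set of $<k$ vertices, and calibrating the numbers ($32k^2$, $16k^2$) so that `$<k$ vertices kill $<k$ cycles and $<k$ of the $Q_j$' leaves enough slack — this is where the quadratic bound comes from, presumably because one needs the concentric cycles to be genuinely nested with disjoint `radial' connectors, costing a factor of $k$ twice over. I would handle this by first extracting from $H$ not just nested cycles but a `subwall annulus': $k$ disjoint paths going radially from $C_1$ to $C_0$ through the $16k^2$ cycles, so that the relevant subgraph is itself (a subdivision of) a cylindrical grid, where the no-small-separator claim between the outer and inner boundary circles is immediate from the grid structure. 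Combining this cylindrical grid with the truncated $\Omega$--$C_1$ paths via Lemma~\ref{lem:concatpaths} (with $B$ the set of the $\le 16k^2$ endpoints on $C_1$, suitably thinned to an odd number $2k-1$) then delivers the $k$ disjoint $\Omega$--$C_0$ paths, finishing the proof.
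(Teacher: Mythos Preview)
Your truncation step does not go through. You claim that each $Q_j$ must hit $C_1$ because it starts outside $D(H)$ and ends at a branch vertex of $H$ in $\closure{D(H)}$; but $C_1$ is strictly inside $D(H)$, and the terminal branch vertex of $Q_j$ can perfectly well lie in the annulus between the boundary cycle of $H$ and $C_1$---indeed it can lie on the boundary cycle of $H$ itself. Since the boundary cycle of a wall of size $32k^2+r$ carries far more than $16k^2$ branch vertices, it is entirely possible that \emph{none} of the $Q_j$ ever enters $D(C_1)$. Once the truncation fails, both your Menger sketch and your Lemma~\ref{lem:concatpaths} application collapse, because you have no $\Omega$--$C_1$ paths to work with. (There is also a numerical slip at the end: to invoke Lemma~\ref{lem:concatpaths} with $|B|=2k-1$ you need $2k-1$ disjoint $B$--$C_0$ paths, but your cylindrical-grid annulus is built with only $k$ radial paths.)

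The paper avoids this difficulty by a rather different device. Instead of trying to push the given paths inward across an annulus, it first \emph{re-chooses} the family $\P$ of $16k^2$ paths so as to minimise $|E(\P)\setminus E(H)|$ and, subject to that, the total length. The point of this minimisation is to show that no path in $\P$ meets the central $r$-subwall $H_0$: if some path $P$ reached $H_0$, it would cross each of $8k^2$ concentric cycles without ending on any of them, and the minimality forces the two branch vertices bounding the subdivided edge that $P$ uses on each such cycle to be endpoints of other paths in $\P$---producing $16k^2$ paths other than $P$, a contradiction. Once all endpoints of $\P$ are known to lie outside $H_0$, a pigeonhole on rows and columns of $H$ shows that at least $2k$ of these endpoints can be linked by disjoint paths in $H$ to branch vertices on the boundary cycle of $H_0$; Lemma~\ref{lem:concatpaths} then finishes. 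So the missing idea in your plan is this re-selection/minimisation step controlling where the paths land, rather than assuming they penetrate far enough to truncate.
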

\begin{proof}
Let $H_0$ be an $r$-wall in $H$ with $8k^2$ concentric cycles in $H$ enclosing $H_0$. Choose a set $\P$ of $16k^2$ disjoint paths from $\Omega$ to branch vertices of $H$ such that $|E(\P)\setminus E(H)|$ is minimal, and among these so that $\sum_{P\in\P}|P|$ is minimal.

We claim that no path $P\in\P$ meets $H_0$. Otherwise $P$ would meet each of our $8k^2$ concentric cycles $C\subseteq H$ without ending on $C$. By the choice of $\P$, this means that $P$ contains no branch vertex of $H$, but meets $C$ only inside one subdivided edge of $C$ before leaving it again.
By our first condition for the choice of $\P$, the branch vertices of $H$ that are the ends of this subdivided edge must each lie on another path from $\P$,
which must end there. So we have at least $16k^2$ paths from $\P$ other than $P$ ending at such branch vertices, which contradicts our assumption
that $|\P|=16k^2$.

As $|\P|=16k^2$, either at least $4k$ rows or at least $4k$ columns contain terminal vertices of paths in $\P$. 
In either case it is easy to see that at least half of these branch vertices (i.e. $\geq 2k$) can be linked disjointly to branch vertices on the boundary cycle of $H_0$.  Lemma~\ref{lem:concatpaths} completes the proof.
\end{proof}

Let $G$ be a graph and $X,Y\subseteq V(G)$ with $|X|=|Y|=:k$. An $X$--$Y$~\emph{linkage} in~$G$ is a set of $k$ disjoint paths in $G$ such that each of these paths has one end in $X$ and the other end in $Y$.

An $X$--$Y$~linkage $\P$ in  $G$ is \emph{singular}  if $V(\bigcup\P) = V(G)$ and $G$ does not contain any other $X$--$Y$ linkage.%
   \COMMENT{}
The next lemma will be used in the proof of Lemma \ref{lem:ortholink}. 

\begin{lem}\label{lem:singlink}
If a graph $G$ contains a singular $X$--$Y$ linkage~$\P$ for vertex sets $X,Y\subseteq V(G)$, then $G$ has path-width at most~$|\P|$.
\end{lem}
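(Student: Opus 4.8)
The plan is to induct on $|\P|$, or equivalently to build a path-decomposition of $G$ of width at most $|\P|$ directly, guided by the linkage $\P = \{P_1,\dots,P_k\}$ itself. The key structural fact to exploit is \emph{singularity}: since $V(\bigcup\P) = V(G)$, every vertex lies on some $P_j$, so a natural way to scan $G$ is to move a ``front'' consisting of at most one vertex on each of the $k$ paths. Concretely, I would like to show that the paths $P_j$ can be synchronized — parametrized $P_j = p^j_0 p^j_1 \dots p^j_{\ell_j}$ with $p^j_0\in X$, $p^j_{\ell_j}\in Y$ — so that there is a sequence of ``cuts'' $S_0, S_1, \dots, S_N$, each $S_t$ containing at most one vertex from each $P_j$ (hence $|S_t|\le k$), with $S_0 = X$, $S_N = Y$, consecutive cuts differing by advancing one step along exactly one path, and each $S_t$ separating the ``already scanned'' initial segments from the ``not yet scanned'' final segments. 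The bags of the path-decomposition would then be the sets $S_t$ together with the single new vertex introduced at step $t$, giving width at most $k = |\P|$; one checks the three path-decomposition axioms from this description.

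The first concrete step is to rule out, using the no-other-linkage hypothesis, the pathological configurations that would prevent such a synchronization — essentially, to show the union $\bigcup\P$ together with the ``back-edges'' of $G$ (edges of $G$ not on any $P_j$) has a layered structure compatible with the orientation of the paths from $X$ to $Y$. The clean way to see this is via Menger/augmenting-path reasoning: if $G$ had an edge or a short detour that let one ``overtake'' the front in a way inconsistent with the linear order, one could reroute to produce a second $X$–$Y$ linkage, contradicting singularity. So the real content is: \emph{singularity forces the existence of a linear sweep order on $V(G)$ refining all the $P_j$ simultaneously, with bounded front.} I would make this precise by choosing a topological-sort-like order and arguing that any edge $xy\in E(G)$ whose endpoints are ``far apart'' in this order would, by swapping along the unique path through $x$ and the segment to $y$, create an alternative linkage.

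Alternatively — and this may be the cleaner write-up — one can induct: pick the first vertex (in the $X$-to-$Y$ direction) at which the linkage ``could be modified'', i.e. find a vertex whose deletion still leaves the restricted linkage singular in a smaller graph, peel off one bag, and recurse; singularity is the invariant that must be shown to persist. Either way, the main obstacle is the same: translating the combinatorial hypothesis ``no second linkage and the linkage is spanning'' into the geometric statement ``$G$ sweeps out with a width-$k$ front''. I expect the delicate point to be handling back-edges and the branching of $G$ off the paths correctly, i.e. verifying that when the front sits at $S_t$, every edge of $G$ has at least one endpoint among the scanned vertices or is contained in the bag — this is where singularity must be invoked to exclude an edge joining an unscanned vertex of $P_i$ to an unscanned vertex of $P_j$ that would ``short-circuit'' the linkage. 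Once that claim is established, checking that the $S_t$ (plus new vertices) form a path-decomposition of width $\le |\P|$ is routine.
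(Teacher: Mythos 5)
Your overall strategy --- sweeping a front of at most one vertex per path from $X$ to $Y$, equivalently peeling off one vertex at a time by induction --- has the same shape as the paper's argument, and the width bound $|\P|$ would indeed follow once such a sweep is set up. The genuine gap is in the one place you yourself flag as delicate: the mechanism by which singularity is exploited. Your proposed mechanism, that an edge whose endpoints are ``far apart'' in the sweep order lets one ``reroute to produce a second $X$--$Y$ linkage by swapping along the unique path through $x$ and the segment to $y$'', is false. A single edge $xy$ with $x$ on $P_i$ and $y$ on $P_j$ cannot be used on its own: rerouting $P_i$ across it strands the initial segment of $P_j$ and the terminal segment of $P_i$, and in general no second linkage results. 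Concretely, two disjoint $X$--$Y$ paths joined by one extra edge between internal vertices form a singular spanning linkage, so singularity does \emph{not} exclude edges between ``unscanned'' vertices of distinct paths, contrary to what your verification step assumes; it only guarantees that the sweep can be scheduled so that every such edge is eventually covered, which is essentially the statement to be proved.

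What the paper does instead (and what your sketch is missing) is to localize the argument to the current front $X$ and to reroute \emph{cyclically} rather than along a single edge. If every $x\in X$ had a neighbour $y(x)$ other than its successor on its own path $P(x)$, then $y(x)$ lies on a different path of $\P$ (else a shortcut along $P(x)$ already gives a second linkage), and the auxiliary digraph on $\P$ with an arc $P(x)\to P(y(x))$ for each $x\in X$ has all out-degrees equal to $1$, hence contains a directed cycle; rerouting simultaneously around that cycle --- each $x$ on the cycle jumps to $y(x)$ and follows $P(y(x))$ to $Y$ --- does produce a second disjoint $X$--$Y$ linkage, contradicting singularity. Hence some $x\in X$ has no neighbour besides (possibly) its path-successor; this $x$ can be split off into the first bag, and the induction on $|G|$ closes, since singularity and spanning are preserved in $G-x$ once $x$ is replaced in $X$ by its successor. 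Without this simultaneous cyclic rerouting, or some equivalent, neither of your two variants gets off the ground: the first asserts the existence of the monotone sweep order without a correct argument, and the second does not identify which vertex can be safely peeled off or why one exists.
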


\begin{proof}
Let $\P$ be a singular $X$--$Y$ linkage in~$G$. Applying induction on~$|G|$, we show that $G$ has a path-decomposition $(X_0,\dots,X_n)$ of width at most~$|\P|$ such that $X\subseteq X_0$. For every $x\in V(G)$ let $P(x)$ denote the path $P\in\P$ that contains $x$. Suppose first that every $x\in X$ has a neighbour $y(x)$ in~$G$ that is not its neighbour on $P(x)$. Then $y(x)\notin P(x)$ by the uniqueness of~$\P$.%
   \COMMENT{}
   The digraph on $\P$ obtained by joining for every $x\in X$ the `vertex' $P(x)$ to the `vertex' $P(y(x))$ contains a directed cycle~$D$.%
   \COMMENT{}
   Let us replace in $\P$ for each $x\in X$ with $P(x)\in D$ the path $P(x)$ by the $X$--$Y$ path that starts in~$x$, jumps to $y(x)$, and then continues along~$P(y(x))$. Since every `vertex' of~$D$ has in- and outdegree~1 in $D$, this yields an $X$--$Y$ linkage with the same endpoints as $\P$ but different from~$\P$. This contradicts our assumption that $\P$ is singular. Thus, there exists an $x\in X$ without any neighbours in $G$ other than (possibly) its neighbour on~$P(x)$. Consider this~$x$.

If $P(x)$ is trivial, then $x$ is isolated in $G$ and $x\in X\cap Y$. By induction, $G-x$ has a path-decomposition $(X_1,\dots,X_n)$ of width at most $|\P|-1$ with $X\setminus\{x\}\subseteq X_1$.%
   \COMMENT{}
   Add $X_0:= X$ to obtain the desired path-decomposition of~$G$. If $P(x)$ is not trivial, let $x'$ be its second vertex, and replace $x$ in $X$ by $x'$ to obtain~$X'$. By induction, $G-x$ has a path-decomposition $(X_1,\dots,X_n)$ of width at most $|\P|$ with $X'\subseteq X_1$. Add $X_0:= X\cup\{x'\}$ to obtain the desired path-decomposition of~$G$.
\end{proof}
 
 Our next lemma is a weaker version of Theorem 10.1 of~\cite{JORG2}.
 
\begin{lem}\label{lem:ortholink}
Let $s$, and $t$ be positive integers with $s\ge  t $. Let $G'$ be a graph embedded in the plane, and let $X\subseteq V(G')$ be a set of $t$ vertices on a common face boundary of $G'$. Let $(C_1, \dots, C_s)$ be concentric cycles in $G'$, tightly enclosing $X$.
Let $G''$ be another graph, with $V(G')\cap V(G'') \subseteq V(C_1)$. Assume that $G'\cup G''$ contains an $X$--$Y$ linkage $\P$  with $Y \subseteq V(C_1)$.  Then there exists an $X$--$Y$ linkage $\mathcal{P}'$ in $G'\cup G''$ such that $\mathcal{P}'$ is orthogonal to $C_{t+1}, \dots, C_{s}$.  
\end{lem}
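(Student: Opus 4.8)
The plan is to reroute the given linkage $\P$, keeping its ends $X$ and $Y$ fixed, until it becomes orthogonal to $C_{t+1},\dots,C_s$, the rerouting being driven by a minimality measure that counts how often the paths dip in and out of the discs $\overline{D(C_j)}$. First some reductions. The $C_i$ being concentric are pairwise disjoint, so the discs $D(C_1)\supsetneq\dots\supsetneq D(C_s)$ are nested, $X\subseteq D(C_s)$, and $Y\subseteq V(C_1)$ lies outside $\overline{D(C_j)}$ for $j\ge 2$. Since $G'$ is planar and $V(G')\cap V(G'')\subseteq V(C_1)$, inside the open disc $D(C_1)$ the graph $G'\cup G''$ agrees with $G'$; so, tracing any path of any $X$--$Y$ linkage from its end in $X$, it can only leave $\overline{D(C_j)}$ through a vertex of $C_j$, and it must meet $V(C_j)$ for every $j$. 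Hence the linkage is orthogonal to $C_{t+1},\dots,C_s$ precisely when $P\cap\overline{D(C_j)}$ is connected for each of its paths $P$ and every $t<j\le s$; note also that inside $\overline{D(C_1)}$ all paths run in the planar graph $G'_1:=G'[\overline{D(C_1)}]$, so this orthogonality question lives in a disc.

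Among all $X$--$Y$ linkages of $G'\cup G''$ I would pick $\P'$ minimising, lexicographically, the pair
\[
	\Big(\ \sum_{P\in\P'}\sum_{j=t+1}^{s}\bigl|\{\,\text{components of }P\cap\overline{D(C_j)}\,\}\bigr|\ ,\ \ \sum_{P\in\P'}\bigl|E(P)\cap E(G'_1)\bigr|\ \Big).
\]
I claim $\P'$ is orthogonal to $C_{t+1},\dots,C_s$, i.e.\ that the first coordinate attains its least possible value $t(s-t)$. If not, some $P\in\P'$ meets some $C_j$ (with $t<j\le s$) in at least two components, so $P$ contains a subpath $Q$ with both ends $a,b$ on $V(C_j)$, with interior disjoint from $\overline{D(C_j)}$, and with $P$ lying inside $\overline{D(C_j)}$ just before $Q$; together with one of the two $a$--$b$ arcs of $C_j$, the excursion $Q$ bounds a disc meeting $D(C_j)$ only in that arc.

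The rerouting step replaces $Q$ by a path that hugs $C_j$: it runs along a subpath of $C_j$ from $a$ to $b$, straying only a bounded distance outward (into the annuli between $C_j$ and $C_t$) when it must skirt another path of $\P'$, and swapping tails with a conflicting path when that is cleaner. Since such a reroute stays inside $\overline{D(C_j)}$, it cannot increase the component count of any path of $\P'$ in any $\overline{D(C_{j'})}$, yet it merges two components of $P\cap\overline{D(C_j)}$; so it strictly decreases the first coordinate of the measure, contradicting minimality. Two hypotheses are what make the exchange go through: the tight enclosure of $X$ forces the annuli between consecutive cycles to be thin — a point of $C_k$ is within distance $s-k+2$ of $X$ — keeping rerouted pieces and their detours local; and there being only $t=|\P'|$ paths but $t$ buffer cycles $C_1,\dots,C_t$ gives exactly the room to perform all reroutes disjointly, which is why orthogonality is demanded only for the inner $s-t$ cycles. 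When a piece of $\bigcup\P'$ around $C_j$ is too rigid for an obvious local exchange, I would restrict $\P'$ to the subgraph it spans together with $C_j$: a linkage admitting no improving reroute is singular on its own vertex set, so by Lemma~\ref{lem:singlink} that region has path-width at most $t$, which — together with the concentric cycles sitting inside it — constrains its structure closely enough to exhibit the desired reroute explicitly.

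The heart of the proof, and its only genuine difficulty, is exactly this rerouting/uncrossing step: making the surgery along $C_j$ yield a truly vertex-disjoint $X$--$Y$ linkage that strictly improves the measure. The reductions, the choice of measure, and the disc bookkeeping are routine; the real work lies in fitting the local surgeries together, using the thinness of the annuli and the buffer of $t$ spare cycles. This is the weaker form of Theorem~10.1 of \cite{JORG2}, whose full strength (orthogonality to all $s$ cycles) would cost more and for which the buffer of $t$ outer cycles would not be available.
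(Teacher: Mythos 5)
Your proposal assembles the right ingredients (an extremal choice of linkage, uncrossing along the cycles, Lemma~\ref{lem:singlink}, the special role of the $t$ outer cycles), but it stops exactly where the proof has to happen, and you say so yourself: the step in which a non-orthogonal extremal linkage is shown to admit a strictly improving reroute that is still a vertex-disjoint $X$--$Y$ linkage is never carried out. ``Hugging $C_j$, straying outward when skirting another path, swapping tails when that is cleaner'' is not an argument: a tail swap or a detour into an outer annulus can create new components of $P'\cap\closure{D(C_{j'})}$ for other paths $P'$ and other indices $j'$, so it is not established that your lexicographic measure decreases, and the assertion that an exchange-free linkage is \emph{singular} (no other linkage exists on its vertex set) does not follow from the absence of an improving reroute. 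Your appeal to tight enclosure (``thin annuli keep the surgery local'') is also not the role it actually plays.

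The paper avoids having to exhibit disjoint reroutes explicitly. It takes an edge-minimal counterexample, so that $G'\cup G''=\bigcup_i C_i\cup\bigcup\P$; edge-minimality immediately gives that $\P$ is singular (any other linkage would omit an edge), hence by Lemma~\ref{lem:singlink} the whole graph has path-width at most $t$. Tight enclosure rules out inward excursions, and a cascading exchange argument (if straightening a ``local peak'' of $P$ along $C_j$ is blocked by another path, that path has a local peak at a larger index, contradicting an extremal choice) rules out outward excursions; together these show each path runs orthogonally from $X$ out to its first hit of $C_1$. Finally---and this is where $t$ really enters---if some path returned to $C_{t+1}$ afterwards, one would find a subdivided $(t+1)\times(t+1)$ grid inside the graph, whose path-width $t+1$ contradicts the bound $t$ from singularity. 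So the restriction to $C_{t+1},\dots,C_s$ is forced by a path-width obstruction, not by ``room to perform $t$ reroutes disjointly'' as you suggest. As it stands your proposal is a plan with the central lemma missing, not a proof.
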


\begin{proof}
Assume the lemma is false, and let $G'$, $G''$, $\mathcal{P}$, and $(C_1, \dots, C_s)$ form a counterexample with the minimum number of edges. 
Then $G:=G'\cup G'' = \bigcup_{i=1}^s C_i \cup \mathcal{P}$, and $V(G)=V(\bigcup \P)$. (Delete isolated vertices if necessary).  
Let us show that, for all $P \in \mathcal{P}$ and for all $1 \le i \le s$, every component of $P \cap C_i$ is a single vertex. 
Indeed, if $P \cap C_i$ had a component  containing an edge $e$, then $G'/e$ would form a counterexample with fewer edges,
since any path using the contracted vertex $v_e$ would still form, or could be expanded to a path for our desired path system $\P'$.

Next, let us show that
\begin{equation}\label{claim:Gsingular}
\begin{minipage}[c]{0.8\textwidth}\em
$\P$ is singular. \end{minipage}\ignorespacesafterend 
\end{equation}
If there exists an $X$--$Y$~linkage $\overline{\mathcal{P}}$ distinct 
from $\mathcal{P}$, then at least one of the edges of $\mathcal{P}$ is not contained in 
$\overline{\mathcal{P}}$. 
Since, as noted above, the paths in $\P$ have no edges on $C_1,\ldots,C_s$,
the subgraph $\bigcup_{i=1}^s C_i \cup \bigcup \overline{\mathcal{P}}$  forms a counterexample
with fewer edges, a contradiction.  This proves~$(\ref{claim:Gsingular})$.

Our choice of the cycles $C_i$ as tightly enclosing $X$ implies at once:
\begin{equation}\label{claim:nobumps}
\begin{minipage}[c]{0.8\textwidth}\em
There is no subpath $Q$ of some path $P\in \mathcal{P}$ in $D(C_j)$ with both endpoints in $C_j$ for some $j$ and otherwise disjoint
from   $\bigcup_{i} V(C_i)$.
\end{minipage}\ignorespacesafterend 
\end{equation}
A \emph{local peak} of $\mathcal{P}$ is a subpath $Q$ of  a path $P\in \mathcal{P}$ 
such that $Q$ has both endpoints on $C_j$ for some $j>1$ and every internal vertex 
of $Q$ in $\left ( \bigcup_{i} V(C_i) \right)$ lies in $V(C_{j-1})$.  

Let us show the following:
\begin{equation}\label{cl:3}
\begin{minipage}[c]{0.8\textwidth}\em
$\P$ has no local peak
\end{minipage}\ignorespacesafterend 
\end{equation}
Suppose $Q=x\ldots y \subseteq P\in \P$ is a local peak, with endpoints in $C_j$ say, chosen 
so that $j$ is maximal. 

Let $xC_jy$ denote the subpath of $C_j$ such that the cycle $xC_jy \cup Q$ bounds a disc $D\subseteq D(C_{j-1})\setminus D(C_j)$.
If no interior vertex of $xC_jy$ lies on a path from $\P$, we can replace $Q$ by $xC_jy$ on $P$ and then contract this subpath of $P$, 
to obtain a counterexample with fewer edges.
Hence $xC_jy$ does have an interior vertex $z$ on a path $P'\in\P$.%
   \COMMENT{}
   Let $zP'z'$ be a minimal non-trivial subpath of $P'$ such that $z'\in \bigcup_iC_i$ (This exists, as $j>1$.) If $z'\in C_j$, then $zP'z'\subseteq D$ by  (\ref{claim:nobumps}).
We then repeat the argument, with the local peak $zP'z'$ instead of $Q$. This can happen only finitely often, 
and will eventually contradict the minimality of our counterexample.
We may thus assume that $z'$ cannot be chosen in $C_j$. Then $zP'z'\cap D = \emptyset$ and $z'\in C_{j+1}$,%
   \COMMENT{}
   and $zP'z'$ extends 
to a subpath $z''P'z'$ of $P'$ with $z''\in C_{j+1}$ and no vertex other than $z,z',z''$ in $\bigcup_i C_i$. 
This path is a local peak of $\P$ that contradicts our choice of $Q$ with $j$ minimal, completing the proof of $(\ref{cl:3})$.

An immediate consequence of 
$(\ref{claim:nobumps})$ and $(\ref{cl:3})$ is the following.  
For every $P \in \mathcal{P}$, let $x$ be the endpoint of $P$ in $X$ and 
let $y$ be the vertex of $V(C_1) \cap V(P)$ closest to $x$ on $P$.
Then the subpath $\overline{P}:=xPy$ of $P$
is orthogonal to the cycles $C_1, \dots, C_s$.  
In fact, $\overline{P} \cap C_i$ is a single vertex, for each $1 \le i \le s$.  

The final claim will complete our proof:
\begin{equation}\label{cl:4}
\begin{minipage}[c]{0.8\textwidth}\em
For every $P \in \mathcal{P}$, the path $P - \overline{P}$ does not meet $C_{t+1}$. 
\end{minipage}\ignorespacesafterend 
\end{equation}

To prove $(\ref{cl:4})$, suppose there exists $P \in \mathcal{P}$ such that 
 $(P - \overline{P}) \cap C_{t+1} \neq \emptyset$. 
As before, it follows now from $(\ref{claim:nobumps})$ and $(\ref{cl:3})$
 that $P - \overline{P}$ contains a subpath $Q$ from $C_{t+1 }$ 
to $C_1$ that is orthogonal to the cycles $C_{t+1}, C_{t}\dots, C_{1}$.  
Together with final segments of our paths $\overline P$ and the cycles $C_1,\ldots,C_{t+1}$, this path $Q'$ 
forms a subdivision of the $(t+1)\times (t+1)$ grid, which is well known to have path-width $t+1$.  This 
contradicts $(\ref{claim:Gsingular})$ and Lemma \ref{lem:singlink}, proving $(\ref{cl:4})$.  
\end{proof}


\section{Proof of the Main Result}
\label{sec:mainresult}

Before proceeding with the proof of Theorem \ref{thm:richthm}, we will need one more lemma.  A similar result can be found in \cite{DemaineHajiaghayi08}.
\begin{lem}\label{lem:G0haslargetw}
For every integer $t$ and all integers $\alpha,g>0$ there is an integer $s>0$ such that the following holds. Let $G$ be a graph of tree-width at least $s$ and $(\sigma,G_0,A,\V,\emptyset)$ an $\alpha$-near embedding of  $G$ in a surface $\Sigma$ of genus $g$ such that all vortices $V\in \V$ have depth at most $\alpha$. Then $G_0$ has tree-width at least $t$.
\end{lem}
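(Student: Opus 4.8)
The plan is to argue by contradiction: suppose $G_0$ has small tree-width, say tree-width less than $t'$ for a suitable $t'$ to be determined in terms of $t,\alpha,g$. From a tree-decomposition of $G_0$ of width less than $t'$ I want to build a tree-decomposition of $G$ of bounded width, contradicting the hypothesis that $\tw(G)\geq s$ once $s$ is chosen large enough. The obstacle is that $G$ is not simply $G_0$ together with the apex set; it also contains the vortices $V\in\V$, which are attached to $G_0$ along their society vertices $\Omega(V)\subseteq V(G_0)$ but are otherwise not controlled by the tree-decomposition of $G_0$. So the first step is to incorporate each vortex. Since each $V\in\V$ has depth at most $\alpha$, it has a path-decomposition $(X_1^V,\dots,X_{n}^V)$ of width at most $\alpha$ with $w_i\in X_i^V$. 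I will glue this path onto the tree-decomposition of $G_0$ by, for each $i$, attaching the bag $X_i^V$ (as a new node) to some fixed node of the $G_0$-decomposition whose bag contains $w_i$. This does not quite work directly because the $w_i$ for different $i$ may sit in far-apart bags of the $G_0$-decomposition, so the path-bags would not be ``glued consistently.'' The standard fix: note $\Omega(V)$ lies on a single face boundary via the near-embedding (the $\Omega_i=V(G_i\cap G_0)=\sigma^{-1}(\partial D_i)$), and more simply, one can enlarge each bag of the $G_0$-decomposition along the tree path connecting the relevant $w_i$'s; but a cleaner route is to first observe that $\Omega(V)$ has bounded size? No — $\Omega(V)$ may be large. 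So instead I will use that the vortex has a \emph{path}-decomposition: take a path $\Pi_V$ in the decomposition tree of $G_0$ whose union of bags contains all of $\Omega(V)$ (such a path need not exist in general, but one can take a minimal subtree $T_V$ meeting all bags containing some $w_i$, and then...).

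Here is the clean version. First apply the well-known fact (from \cite{diestel}, Lemma~12.3.5 as used in Lemma~\ref{lem:torsotwlarge}) in the following form: if $G$ has a tree-decomposition $(W_t)_{t\in S}$ and for each part we can re-decompose $G[W_t]$ (its torso, say) with bounded width, the widths combine. Concretely: the graph $G-A$ is the edge-disjoint union of $G_0$ and the $G_i$'s, overlapping only in the $\Omega_i\subseteq V(G_0)$. Build a tree-decomposition of $G-A$ as follows. Start from a tree-decomposition $(W_t)_{t\in S}$ of $G_0$ of width $<t'$. For each vortex $V_i$, its society $\Omega_i$ is linearly ordered $(w_1^i,\dots,w_{n_i}^i)$, and by property (iv) of near-embedding $\sigma(\Omega_i)=\sigma(G_0)\cap\partial D_i$ lies on the boundary of a single face $f_i$ of $G_0$ in $\Sigma$; hence — and this is the key topological input — there is a bounded-width ``fan'' one can use, or, avoiding topology, one notes instead: a face boundary of a graph embedded with positive representativity is a closed walk, so $\Omega_i$ is \emph{cyclically} arranged and one can adjust the tree-decomposition of $G_0$ so that $\Omega_i$ lies in a path of bags. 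Rather than fuss with this, I will invoke Lemma~\ref{lem:contractingface}: contract each face $f_i$ of $G_0$ (that accommodates a vortex) to a single vertex $v_i$, obtaining $G_0^*$ embedded in $\Sigma$; by Lemma~\ref{lem:contractingface}, if $G_0$ had tree-width $\geq s(t',g)$ then $G_0^*$ has tree-width $\geq t'$... — wait, that goes the wrong way.

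Let me state the plan I actually expect to carry out. The right tool is: \textbf{(a)} each $G_i$ ($i\ge 1$) has a path-decomposition of width $\le\alpha$ along its society; \textbf{(b)} the societies $\Omega_i$ lie on face boundaries of $G_0$, and in $G_0$ one can find, after possibly increasing the width by an additive constant depending only on $\alpha$ and $g$, a tree-decomposition $(W_t)_{t\in S}$ of $G_0$ together with, for each $i$, a \emph{path} $S_i\subseteq S$ such that $\Omega_i\subseteq\bigcup_{t\in S_i}W_t$ and consecutive society vertices lie in consecutive bags along $S_i$ — this is where I would use that a face boundary is a closed walk together with a lemma turning ``lies on a closed walk'' into ``covered by a path in some decomposition of bounded extra width,'' essentially the observation that a cycle has path-width $2$; \textbf{(c)} then subdivide along $S_i$ and insert the vortex path-decomposition bag-by-bag, i.e. replace the bag $W_t$ for $t\in S_i$ by $W_t\cup X_j^{V_i}$ where $w_j^i$ is the society vertex ``assigned'' to $t$. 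The resulting tree-decomposition of $G-A$ has width at most $t' + \alpha + O(g)$ or so. \textbf{(d)} Finally add $A$ ($|A|\le\alpha$) to every bag to get a tree-decomposition of $G$ of width at most $c=c(t',\alpha,g)$. Choosing $s := \max(s_{\text{Lem }\ref{lem:contractingface}}, c)+1$, or rather defining everything so that $c < s$, contradicts $\tw(G)\ge s$; tracing the dependencies, $t'=t$ suffices in the end (we only need \emph{some} finite bound on $G_0$'s tree-width to derive a finite bound on $G$'s), so it is cleanest to phrase it as: if $G_0$ had tree-width $< t$ then $G$ would have tree-width $< c(t,\alpha,g)$, hence taking $s := c(t,\alpha,g)$ proves the lemma.

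\textbf{Main obstacle.} The delicate point is step (b)–(c): aligning the vortex's linear path-decomposition with a path of bags in a tree-decomposition of $G_0$ while keeping the width increase bounded. Naively the society vertices of one vortex could be scattered all over the tree-decomposition of $G_0$. The resolution uses the embedding: $\Omega_i$ lies on a single face boundary $\partial f_i$, which (since we may assume positive representativity, or simply since $\Sigma$ has bounded genus so each face is a disc after the reductions, cf.\ the remark after the definition of representativity) is traced by a closed walk $C_i$ in $G_0$; contracting or using that $C_i$ has path-width $O(1)$, one first builds a tree-decomposition of $G_0$ in which all of $V(C_i)$ lies along a path of bags, then grafts on the vortex. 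Managing all the vortices simultaneously — their discs $D_i$ have disjoint interiors, so the faces $f_i$ are distinct and the modifications for different $i$ can be done independently, which is what makes the bookkeeping go through — is the part that needs care but no new ideas.
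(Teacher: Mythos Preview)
Your overall strategy---prove the contrapositive by turning a tree-decomposition of $G_0$ of width $<t$ into one of $G$ of bounded width---is the same as the paper's. But step~(b) is a genuine gap: you assert that one can modify a bounded-width tree-decomposition of $G_0$ so that each society $\Omega_i$ is covered by a \emph{path} of bags with consecutive $w^i_j$ in consecutive bags, at the cost of only a bounded width increase. You justify this only by ``a cycle has path-width~2'' and ``lies on a face boundary,'' neither of which gives the claim: adding a long cycle through prescribed vertices of a graph can blow up tree-width arbitrarily, and the fact that $\Omega_i$ sits on a face boundary does not by itself force any tree-decomposition of $G_0$ to respect its cyclic structure.

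The paper's proof fills this gap with two moves you did not find. First, it replaces $G_0$ by $G_0^+$, obtained by adding the edges $w^i_jw^i_{j+1}$ around each society (embedded in the disc~$D_i$). Now \emph{every} tree-decomposition of $G_0^+$ automatically has the alignment property you wanted, because the $w^i_j$ form a path in $G_0^+$: if $w^i_j\in V_{t_1}$ and $w^i_k\in V_{t_3}$ then some $w^i_\ell$ with $j\le\ell\le k$ lies in every intermediate bag~$V_{t_2}$. One can then simply set $V'_t:=V_t\cup\bigcup\{X^i_j:w^i_j\in V_t\}$, giving a tree-decomposition of $G$ of width $<\alpha r$ if $G_0^+$ had width~$<r$. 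Second---and this is exactly where Lemma~\ref{lem:contractingface} enters, in the direction you dismissed---one must get back from $G_0^+$ to $G_0$. The contrapositive so far shows $\tw(G_0^+)\ge r$. Contracting the (at most~$\alpha$) vortex faces of $G_0^+$ and then deleting the resulting new vertices yields $G_0$ minus the societies; Lemma~\ref{lem:contractingface} (plus the trivial fact that deleting $\alpha$ vertices drops tree-width by at most~$\alpha$) then gives $\tw(G_0)\ge t$. So the face-contraction lemma is used not on $G_0$ but on $G_0^+$, to pass large tree-width back down to~$G_0$.
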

\begin{proof}
Let $t$, $\alpha$, $g$ be given. By Lemma~\ref{lem:contractingface}, there is an integer $r$ such that for every graph $H$ of 
tree-width at least $r$ embedded in a surface $\Sigma$ of genus $g$ the contraction of $\alpha$ disjoint faces of $H$ to vertices 
leaves a graph of tree-width at least $t+\alpha$.  

Let $G$ be a graph as stated in the Lemma, of tree-width $s>\alpha r$. 
Let $G_0^+$\?{$G_0^+$} be the graph we obtain if for every  vortex $V\in\V$ with $\Omega(V)=(w_1,\ldots,w_n)$ say, 
we add to $G_0$ all edges $w_jw_{j+1}$ for $1\leq j \leq n$, where $n+1:=1$ if not already in $G_0$. 
Clearly, $\sigma$ can be extended to an embedding of $G_0^+$ by embedding the new edges in the corresponding discs $D(V)$. 
\begin{equation}\label{claim:twG_0+large}
\begin{minipage}[c]{0.8\textwidth}\em
The tree-width of $G_0^+$ is at least $r$.
\end{minipage}\ignorespacesafterend 
\end{equation}
Otherwise, choose a tree-decomposition $(V_{t})_{t\in T}$ of $G_0^+$ of width less than $r$. 
For every vortex $V\in \V$ choose a fixed decomposition $(X_1,\ldots,X_n)$ of depth at most $\alpha$. 
For every $t\in T$ define
\[
V'_t:=V_t\cup\bigcup_{V\in\V}\{X_j: w_j\in V_t\}
\]
Note that, as all vortices are disjoint and thus every vertex in $V_t$ can be a society vertex of at most one vortex, we have $|V'_t|\leq\alpha |V_t|<\alpha r$. 
We claim that $(V'_t)_{t\in T}$ is a tree-decomposition of $G\cup G_0^+$.  
To see this, pick a vertex  $v\in V_{t_1}\cap V_{t_3}$ for distinct $t_1,t_3\in T$.  
We have to show that $v\in V'_{t_2}$ for all $t_2\in t_1Tt_3$. 
Let us assume that $v\notin V(G_0^+)$ as the other case is easy. 
By construction, there is a vortex $V$ with $\Omega(V)=(w_1,\ldots,w_n)$ such that for some $w_j,w_k\in \Omega(V)$, 
we have $v\in X_j\cap X_k$ and $w_j\in V_{t_1}$ and $w_k\in V_{t_3}$. 
We may assume without loss of generality that $j<k$. 
By construction, $G_0^+$ contains path $w_jw_{j+1}\ldots w_k$. 
As $V_{t_2}$ separates $V_{t_1}$ from $V_{t_3}$ in $G\cup G_0^+$, 
there is a vertex $w_\ell\in V_{t_2}$ for some $j\leq\ell\leq k$. 
Then $v\in X_\ell$, since $(X_1,\ldots,X_n)$ is a path-decomposition, so $v\in V_{t_2}$ as desired.

Clearly, $(V_t)_{t\in T}$ is a tree-decomposition of $G$ as well, 
but it has width at most $\alpha r$, a contradiction to our choice of $G$. This proves (\ref{claim:twG_0+large}). 

For every vortex $V\in \V$ there is a face $f\subseteq D(V)$ of $G_0^+$ with $\Omega(V)= \boundary f\cap G_0^+$. 
By the choice of $r$, contracting all these faces to vertices yields a graph of tree-width at least $t+\alpha$. Removing the new vertices, of which we have at most $\alpha$, results in the graph $G_0\setminus\bigcup\V$ with tree-width at least $t$: 
note that the new edges of $G^+_0$ disappear in these two steps.
Thus, the graph $G_0$ has tree-width at least $t$ as well, proving the lemma. 
\end{proof}

\begin{proof}[Proof of Theorem~\ref{thm:richthm}]
Let $\hat \alpha$ be the integer $\alpha$ provided for $R$ and $m=0$ by Theorem~\ref{thm:extended1.3}, and let $\hat \gamma$ be an integer 
such that $R$ embeds in every surface $\Sigma$ with $g(\Sigma)> \hat \gamma$. By Theorems~\ref{thm:extended1.3} and~\ref{thm:gridthm} and 
Lemmas~\ref{lem:G0haslargetw}, \ref{lem:flatwall} and~\ref{lem:torsotwlarge}, there is an integer $w$ such that 
if the tree-width of our graph~$G$ is larger than $w$, the following holds: 
There is a tree-decomposition $(V_t)_{t\in T}$ of $G$ such that the torso $\hat G$ of one part $V_{t_0}$ 
has an $\hat\alpha$-near embedding $(\hat\sigma,\hat G_0,\hat A, \hat \V',\emptyset)$ in 
a surface $\hat\Sigma$ in which $R$ cannot be embedded such that $\hat G'_0$ contains a flat wall of size at least
\[
	6^{\hat \alpha + 2\hat \gamma +1}(r + \hat\alpha(\beta + \hat \alpha + 3) + p),
\]
where $p:=2\hat\alpha(\beta + 2\hat \alpha + 2\hat \gamma + 4) + 4$\?{$p$}.
We will show that, with these constants, we find an $\alpha$-near embedding of $G$ for $\alpha=(\alpha_0, \alpha_1, \alpha_2)$
defined as\?{$\alpha$}
\begin{align*}
\alpha_0 &:= \hat\alpha+p(2\hat\gamma + \hat \alpha) + 2\hat\alpha^2 + 2\hat \alpha\\
\alpha_1 &:= \hat\alpha + \hat\gamma \\
\alpha_2 &:= 2\hat\alpha + \hat \gamma
\end{align*}
that is almost $(\beta,r)$-rich: The near embedding satisfies all the desired properties except for (\ref{prop:linkagetogrid}) and (\ref{prop:orthogonalpaths}). Instead, we only find paths linking the societies of large vortices to arbitrary branch vertices of a large wall. But this can be remedied: We apply the result for $32\beta^2+r$ and $16\beta^2$ instead for $r$ and $\beta$, respectively, and with Lemmas~\ref{lem:pathstoboundary} and~\ref{lem:ortholink} we obtain a $(\beta,r)$-rich near-embedding as desired.

First, we will convert the near-embedding of the torso $\hat G$ into a near-embedding of 
the whole graph $G$ by accomodating the rest in its vortices. 
To accomplish this, we will use property~(ii) from Theorem~\ref{thm:extended1.3} of our tree-decomposition.
Pick a component $T'$ of $T-t_0$ and let $t'$ be the vertex in this component adjacent to $t_0$ in $T$. 
Let $Y:=(\bigcup_{t\in T'}V_t)\setminus \hat A$. If $t'$ is the parent of $t_0$ in $T$, i.e., if $t'\in rTt_0$,
then $V_{t'}\cap V_{t_0}\subseteq \hat A$ and hence $Y\cap V_{t_0}=\emptyset$. 
We then add $G[Y]$ to $\hat G$ as a small vortex. 
Suppose now that $t'$ is a child of $t_0$, i.e., that $t_0\in rTt'$. 
Then, by (ii) of Theorem~\ref{thm:extended1.3}, we can either add $G[Y]$ to a part $X_{tt'}$ of a vortex $V$ of $\hat G$ without 
increasing the adhesion of $V$, or we can add $G[Y]$ as a small vortex that is properly attached.

We perform this modification for all components of $T-t_0$. 
Let us collect in a set $\hat \W$ the new small vortices defined, and let $\hat\V$ denote the set of the new possibly modified, 
large vortices. By merging vortices if necessary, 
we may assume that there are no two vortices $W,W'\in\hat\W$ with $\Omega(W)\subseteq\Omega(W')$. 
Note that $(\hat\sigma,\hat G_0, \hat A, \hat \V,\hat \W)$ is an $\hat\alpha$-near-embedding of all of~$G$, 
and thus also an $\alpha$-near embedding.

Let us, more generally, consider $\alpha$-near-embeddings $(\sigma,G_0,A,\V,\W)$ of $G$ in surfaces $\Sigma$ such that 
\begin{equation}\label{inequalities}\tag{$\star$}
\left.
\begin{minipage}[c]{0.8\textwidth}\em
\begin{itemize}
\item All vortices in $\W$ are properly attached
\item All vortices in $\V$ have adhesion at most\\ ${\hat \alpha+g(\hat \Sigma)-g(\Sigma)+|\hat\V|-|\V|}$
\item $g(\Sigma)\leq g(\hat \Sigma)$
\item $|\V| \leq |\hat \V| + (g(\hat \Sigma)-g(\Sigma))$ \quad $(\leq \alpha_1)$
\item $|A| \leq |\hat A| +  p\Big(2\big(g(\hat \Sigma) - g(\Sigma)\big) + |\hat \V| - |\V|\Big)$ \quad $(\leq \alpha_0)$
\end{itemize}
\end{minipage}\ignorespacesafterend 
\right\}
\end{equation}
and further
\begin{equation}\label{bigwall}\tag{$\star\,\star$}
\begin{minipage}[c]{0.8\textwidth}\em
$G_0'$ contains a flat wall $H_0$ of size at least $6^q\mu$.
\end{minipage}\ignorespacesafterend 
\end{equation}
where\?{$\mu$, $q$, $\lambda$}
\begin{align*}
	q &:= |\V|+2g(\Sigma)+1 \\
	\mu &:=\mu(\sigma,G_0,A,\V,\W):=r + |\V|(\beta+2\hat\alpha+\hat \gamma+ 3) + p
\end{align*}
Such near-embeddings exist, since $(\hat\sigma,\hat G_0, \hat A, \hat \V,\hat \W)$ satisfies (\ref{inequalities}) and (\ref{bigwall}).

Our next task is to find, among all such near embeddings, one with the following additional properties (P1)--(P4):
\begin{enumerate}[(P1)]
\item Every two vortices have distance at least $2\lambda+3$ in $\Sigma$.
\item For every vortex $V\in\V$ there exist $\lambda$ cycles $(C_1,\ldots,C_\lambda)$ tightly enclosing~$V$. If $\Sigma\not\hom S^2$, the representativity of $G'_0$ in $\Sigma$ is at least $\lambda$.
\item For all distinct vortices $V,W\in\V$, the discs $\closure{D(C_1(V))}$ and $\closure{D(C_1(W))}$ are disjoint.
\item $H_0$ contains a flat wall $H$ of size $6\mu$ such that $D(H)\cap D(C_1(V))=\emptyset$ for every $V\in\V$.
\end{enumerate}
where
\begin{align*}
	\lambda &:= \lambda(\sigma, G_0, A, \V, \W):=|\V|(\beta+2\hat\alpha+\hat\gamma + 3).
\end{align*}
From all $\alpha$-near-embeddings satisfying  (\ref{inequalities}) and (\ref{bigwall}) 
let us pick one minimizing $(g(\Sigma),|\V|)$ lexicographically. 
We will denote this near-embedding by $\e$. 
We will show that either $\e$ itself has the properties (P1)--(P4) or we can find a disc in $\Sigma$ such that, roughly said, the part of our graph nearly-embedded in this disc can be considered as a near-embedding in $S^2$ with these properties.

For the next steps in the proof, we will repeatedly make use of the following fact: 
for integers $\ell, r$, consider a flat wall $W$ of size $8\ell + 2r$ in $G'_0$.
In $W$, we can find two subwalls $W_1,W_2$ of size $r$, together with $\ell$ concentric cycles $C_1(W_1),\ldots,C_\ell(W_1)$ 
around $W_1$ and $\ell$ concentric cycles $C_1(W_2),\ldots,C_\ell(W_2)$ around $W_2$ 
such that $\closure{D(C_1(W_1))}$ and $\closure{D(C_1(W_2))}$ are disjoint. 
In particular, $W_1$ and $W_2$ have distance at least $2\ell+2$ in $\Sigma$. 
Further, if $V$ is a vortex tightly enclosed by $k<\ell$ cycles $C_1(V),\ldots,C_k(V)$, 
then any two vertices picked from the cycles $C_1(V),\ldots,C_k(V)$ have distance at most $2k<2\ell$ in $\Sigma$. 
Now, a comparison of the distances shows that one of the walls $W_1$, $W_2$ is disjoint 
from $\Omega(V)$ and all the cycles $C_1(V),\ldots,C_k(V)$.

Finally note that, if we delete a set $X$ of $k$ vertices from a wall $H$ of size $\ell>k$, at most $k$ rows and 
at most $k$ columns of $H$ are hit by $X$ and thus, $H-X$ contains a wall of size at least $\ell-4k$.

Let us show first that our near-embedding $\e$ has property (P1).  
Otherwise we apply Lemma~\ref{lem:mergevortices} with $d:=2\lambda$. 
This gives a vertex set $A'$ of size at most $2(2\hat\alpha+\hat \gamma)+d\leq p$ and a near-embedding 
$\e':=(\sigma',G_0-A',A\cup A',\V',\W')$ with $|\V'|\leq|\V|-1$ of $G$ in $\Sigma$. 
By Lemma~\ref{lem:properlyattached}, we may assume that its small vortices are properly attached.
Then, $\e'$ satisfies (\ref{inequalities}) and (\ref{bigwall}) but $(g(\Sigma),|\V'|)<(g(\Sigma),|\V|)$ lexicographically, which contradicts the choice of~$\e$. 

To show properties (P2)--(P4) we consider two cases: when $\Sigma \hom S^2$ and when $\Sigma \not\hom S^2$.

First, we assume that $\Sigma \hom S^2$. Given a vortex $V\in\V$ it is easy to find in $H_0$ a subwall $H$ half the size of $H_0$ such that 
$\closure{D(V)} \cap \closure{D(H)}=\emptyset$.
Then $H$ has size at least $3\cdot 6^{q-1}\mu > 6^{q-1}\mu + 4\lambda$
(as $q-1 \geq |\V|\geq 1$).
Inside $H$ there is a flat wall $H_V$ of size at least $6^{q-1}\mu$ enclosed by $\lambda$ cycles $C_1,\ldots,C_\lambda\subseteq H$.
As $\Sigma \hom S^2$, $C_\lambda,\ldots,C_1$ enclose $V$, 
and $\closure{D(C_\lambda)} \cap \closure{D(H_V)} = \emptyset$. 
Lemma~\ref{lem:tightcycles} shows that there also exist $\lambda$ cycles  $C_1(V),\ldots,C_\lambda(V)$ tightly enclosing~$V$ 
such that $D(C_1(V))$ does not meet $D(H_V)$. Iterating this procedure for all $V\in\V$ establishes (P2), 
while replacing our original wall $H_0$ with a flat subwall $H$ of size at least $6^{q-|\V|}\mu\geq 6\mu$ that
satisfies $\closure{D(H)}\cap\closure{D(C_1(V))}=\emptyset \ \forall\  V\in\V$. 

To prove (P3) suppose, that for two vortices $V,W\in\V$ the discs $\closure{D(C_1(V))}$ and $\closure{D(C_1(W))}$ intersect.
By (P1), all the cycles $C_1(V),\ldots,C_\lambda(V),C_1(W),\ldots,C_\lambda(W)$ are disjoint, 
so we may assume that $D(C_1(V))\subseteq D(C_1(W))$. 
By Lemma~\ref{lem:pieslices}, there is a disc $\Delta\subseteq D(C_1(W))$ containing $D(V)$ and a separation $(X_1,X_2)$ 
of $G$ of order at most $2\lambda+2\hat\alpha\leq p$ such that $G_0\cap X_1 = G_0\cap \Delta$. 
Now let $\tilde \V$ be the set of all vortices of $\V-X_1$ with a society of size at least $4$, 
and let $\tilde \W$ be the set of all vortices of $\W-X_1$, the vortex $(G[X_1],\emptyset)$, 
and the vortices of $\V-X_1$ with a society of at most $3$ vertices.
Clearly, $|\tilde \V|<|\V|$, as $\Omega(V)\subseteq X_1$. It is easy to see now that 
\[
	(\sigma|_{G_0-X_1}, G_0-X_1,A\cup (X_1\cap X_2),\tilde \V,\tilde \W)
\]
is an $(\alpha_0,\alpha_1,\alpha_2)$ near-embedding of $G$ in $\Sigma$, satisfying (\ref{inequalities}), 
and as $H$ is a sufficiently large wall living in $G_0-X_1$, condition (\ref{bigwall}) holds as well. 
This means that $(g(\Sigma),|\tilde \V|)<(g(\Sigma),|\V|)$, a contradiction to our choice of $\e$. This proves (P3) and (P4). 

We now consider the case when $\Sigma \not \hom S^2$. Our plan is to deduce (P2) from (P1) and Lemmas~\ref{lem:conccycles} and Lemmas~\ref{lem:rephigh}.
Thus, we must first show that the representativity of $G'_0$ in $\Sigma$ is at least $2\lambda+2$. 
Suppose not, and apply Lemma~\ref{lem:rephigh} with $z:=2\lambda+2$. 
If (a) of Lemma~\ref{lem:rephigh} holds, we have a near-embedding $\e'$ of $G$ in a surface $\Sigma'$ 
with $g(\Sigma')<g(\Sigma)$ and $|\V'|\leq|\V|+1$. 
The properties (\ref{inequalities}) and (\ref{bigwall}) are easy to verify;
for (\ref{bigwall}), notice that $5\cdot 6^{q-1}\mu \geq 8\lambda + 8$, so deleting up to $z$ vertices from our wall $H$ leaves a wall
of size at least $6^{q-1}\mu$. 
Hence, the fact that $(g(\Sigma'),|\V'|)<(g(\Sigma),|\V|)$ contradicts our choice of $\e$. 
Similarly, if (b) of Lemma~\ref{lem:rephigh} holds, then one of the graphs ${G'}_0^1, {G'}_0^2$ contains a sufficiently large wall,
so one of the near-embeddings $\e_1,\e_2$ satisfies (\ref{inequalities}) and (\ref{bigwall}), 
and $g(\Sigma_1),g(\Sigma_2)<g(\Sigma)$ yields the same contradiction as before. 
This shows that the representativity of $G'_0$ in $\Sigma$ is at least $2\lambda+2$.
We now apply Lemma~\ref{lem:conccycles} to each of the faces of $G'_0$ that contain 
the disc $D(V)$ of a vortex $V\in\V$. Together with Lemma~\ref{lem:tightcycles}, this implies property (P2).

To show property (P3), assume that for two vortices $V,W\in\V$ the discs $\closure{D(C_1(V))}$ and $\closure{D(C_1(W))}$ intersect. 
As before we may assume that $D(C_1(V))\subseteq D(C_1(W))$,
and an application of Lemma~\ref{lem:pieslices} gives us a disc $\Delta\subseteq D(C_1(W))$ containing $D(V)$, and 
a separation $(X_1,X_2)$ of $G$ of order at most $2\lambda+2\hat\alpha\leq p$ such that $G_0\cap X_1 = G_0\cap \Delta$. 
As noted earlier, there exists a flat subwall $H$ of $H_0$ of size at least $6^{q-1}\mu$ that is disjoint from $\Omega(W)$ and 
all the cycles $(C_1(W),\ldots,C_\lambda(W))$, and hence from $X_1\cap X_2$ 
(as $X_1\cap X_2 \cap G'_0 \subseteq \bigcup C_i(W)\cup \Omega(W)$ in Lemma~\ref{lem:pieslices}). 
If $D(H)\cap \Delta = \emptyset$, then $H\subseteq G[X_2\setminus X_1]$, so turning $G[X_1]$ into a small vortex
attached (by an empty society) to $G'_0\cap G[X_2\setminus X_1]$
we can reduce the number of large vortices of our near-embedding, leading to the same contradiction as earlier. 
Otherwise, $D(H)\subseteq \Delta$ and $H\subseteq G[X_1\setminus X_2]$. We now turn $G[X_2]$ into a small vortex attached 
to $G'_0\cap G[X_1\setminus X_2]$ and obtain a contradiction to the minimality of $\e$, since $g(\Delta)=0<g(\Sigma)$.
This completes the proof of (P3) for the case of $\Sigma \not \hom S^2$.

To show (P4), let us enumerate the vortices $\V=:\{V_1,\ldots,V_\ell\}$. 
We will prove by induction on $k$ that, for $1\leq k\leq\ell$, there is a flat wall $H_k \subseteq H_{k-1}$ of size $6^{q-k}\mu$
such that $D(H_k)$ avoids $D(C_1(V_1)),\ldots,D(C_1(V_k))$. 
For $k=0$ this is precisely (\ref{bigwall}). Given $k\geq 1$, we have $q\geq k+3$ and $\lambda\leq \mu/2$. 
Hence, as earlier, we can find a subwall $H_k \subseteq H_{k-1}$ of size $6^{q-(k-1)}\mu/6\geq 6^{q-k}\mu \geq 6^3\mu$ 
from (P2) that avoids $\Omega(V_k)$ and all cycles $C_1(V_k),\ldots,C_\lambda(V_k)$.
If $D(H_k)\cap D(C_1(V_k)) = \emptyset$, then this completes the induction step. 
Otherwise, Lemma~\ref{lem:pieslices} gives us a disc $\Delta\subseteq D(C_1(V_k))$ containing $D(H_k)$ 
and a separation $(X_1,X_2)$ of $G$ of order at most $2\lambda+2\hat\alpha\leq p$ such that $G_0\cap X_1 = G_0\cap \Delta$
and $X_1\cap X_2\cap V(G_0)\subseteq \bigcup C_i (V_k) \cup \Omega(V_k)$.
By (P3), this disc $\Delta$ does not contain $D(W)$ for any large vortex $W\neq V_k$.
We now turn $G[X_2\setminus X_1]$ into a small vortex attached by an empty society to $G'_0\cap G[X_1\setminus X_2]$. 
We are now back in the case of $\Sigma = S^2$ treated before, and can find inside $H_k$
(which we recall has size at least $6^{q-k}\geq 6^3\mu \geq 6\mu + 4\lambda$) 
a flat subwall $H'$ of size $6\mu+4\lambda$ with $\closure{D(H')}\cap \closure{D(V_k)}=\emptyset$.
Inside $H'$ there is a wall of size $6\mu$ (which we note is large enough to satisfy (\ref{bigwall}) for our large vortex and genus $0$)
enclosed by $\lambda$ cycles in $H'$. Re-interpreting these cycles as enclosing $V_k$, as earlier in our proof of (P2)--(P4) for $\Sigma\hom S^2$,
we once more obtain a contradiction to the minimality of $\e$. 
This completes the proof of (P4) for the case of $\Sigma\not \hom S^2$.%
   \COMMENT{}

From all $\alpha$-near-embeddings $(\sigma,G_0,A,\V,\W)$ of $G$ into surfaces $\Sigma$ satisfying (\ref{inequalities}) and (\ref{bigwall}) 
and (P1)--(P4) let us choose one minimizing $|\V|$. Let $H$ be the wall from (P4).

An application of Lemma~\ref{lem:vorticeslinked} now gives us a subgraph $\tilde G_0$ of $G_0$, 
a vertex set $\tilde A\subseteq V(G)\setminus V(\tilde G_0)$ with $|\tilde A|\leq 2\hat \alpha^2 + 2\hat \alpha$ disjoint from $H$ and 
an $\alpha$-near-embedding $\tilde \e := (\tilde\sigma, \tilde G_0, A\cup \tilde A, \tilde\V,\tilde\W)$ of $G$ 
such that every vortex $\tilde V\in\tilde\V$ has a linked decomposition of adhesion at most $\hat \alpha$,
there are still at least $\tilde\lambda:=\lambda-(\hat \alpha+1)$ cycles enclosing every $\tilde V\in\tilde\V$.


Let us show that there is no vertex set $S$ in $G'_0$ of size less than $\beta$ separating $H$ from $\Omega(V)$ for some vortex $V\in \tilde \V$. 
Suppose there is and
let us choose $S$ minimal.
We add to $\tilde G'_0$ a vertex $v$ and edges from $v$ to all society vertices $\Omega(V)$. 
Clearly, after adding $v$ to $X_2$, the set $S$ still separates $H$ from $\Omega(V)$ and 
we can extend our embedding by mapping $v$ and the new edges to $D(V)$. 
By the minimality of $S$, every vertex in $S$ is adjacent to some vertex of the component
$T_0$ of $G'_0-S$ that contains $v$. Let $T$ denote the (connected) graph $T_0$ together with $S$ and all edges between $T_0$ and $S$.
We note that $T_0$ contains $\Omega(V)$.

We fatten the embedded graph $T$ to obtain a closed, connected set $D\subseteq \Sigma$ 
so that $D$ contains $T$ and 
further that that $\boundary D$ intersects with $G'_0$ only in edges incident with both $S$ and $G'_0 \setminus T$. 

Every component $C$ of $\boundary D$ bounds a cycle in $\Sigma$. 
This is clear if $\Sigma\hom S^2$ and if $\Sigma \not \hom S^2$
we could otherwise slightly shift $C$ in neighbourhoods of vertices in $S$ 
to intersect with $G'_0$ only in $S$ and obtain a genus reducing curve that hits $G'_0$ ￼
in less than $\beta$ many vertices, contradicting~(P2).

Let $H'$ be a subwall of $H$ of size at least $6\mu-4\beta$ that avoids $S$ and 
let $Z$ be the component of $\Sigma-\boundary D$ containing $H'$. We define $X_1:=(V(G'_0)\cap Z)\cup S$
and $X_2:=V(G'_0)\cap (\Sigma\setminus Z)$. This gives us a separation $(X_1,X_2)$ with $S\subseteq X_1\cap X_2$.

Further $(X_1, X_2)$ can be modified so that, for every vortex $V'\in\tilde\V\setminus\{V\}$, its society $\Omega(V')$ 
and at least $\tilde\lambda-\beta$ many cycles enclosing $V'$ are contained either in $X_1$ or in $X_2$. 
Indeed, as one of the cycles $C_1(V'),\ldots,C_\beta(V')$ enclosing $V'$ is not hit by $X_1\cap X_2$, it is contained in either $X_1$ or $X_2$. 
As this (plane) cycle $C$ is a separator of $G'_0$, we can add all the vertices embedded in $D(C)$,
in particular $\Omega(V')$ and the vertices of the cycles $C_{\beta+1},\ldots, C_{\tilde\lambda}$, to the same $X_i$.

Let us consider the case that $Z$ is a disc. As before, we add $X_1\cap X_2$ to the apex set $A\cup \tilde A$ 
and add a new small vortex $W:=(G[X_1\setminus X_2],\emptyset)$ to $\tilde\W$. 
This new near-embedding of $G$ in $S^2$ still satisfies (\ref{inequalities}), (\ref{bigwall}), and (P1)--(P4), 
but we have reduced $|\tilde\V|$ with this operation, a contradiction to our choice of the near-embedding. 

Suppose $Z$ is not a disc. 
Then, as each component of $\boundary Z$ bounds a disc and 
each component of $\Sigma \setminus Z$ contains a point of $D$, 
which is connected, $\Sigma\setminus Z$ has exactly one component $Z'\hom S^2$. 
Again, we add $X_1\cap X_2$ to the apex set and accomodate $X_2\setminus X_1$ in a 
small vortex and obtain a near embedding of $G$ with a reduced number of large vortices. 
Let us show that the representativity of our new embedded graph $\hat G:= \tilde G'_0-X_2$ 
is at least $\lambda-\beta$. 
Assume the opposite and pick a genus reducing $\hat G$-normal curve $C$ in $\Sigma$ that 
meets less than $\lambda-\beta$ vertices of $\hat G$. 
Then, we may assume by Lemma~\ref{lem:curvehitsvortexonce} that $C$ intersects the 
face $f$ of $\hat G$ containing $Z'$ at most once. 
We reroute $C$ in $f$ along $\boundary Z'$. 
Now, $C$ is also a $\tilde G'_0$-normal curve 
meeting at most $\beta$ many additional vertices from $S$, 
which contradicts (P2) for $\tilde G'_0$. 
Now, (P1)--(P4) are easy to verify.

We conclude that, for every large vortex $V\in\tilde\V$, the society $\Omega(V)$ is connected to the branch vertices of $H$ by $\beta$ many disjoint paths. 


Finally, as described in the beginning, Lemmas~\ref{lem:pathstoboundary} and~\ref{lem:ortholink} finish the proof.
\end{proof}


\section{Circular Vortex-Decompositions}
\label{sec:circulardecompositions}


In Graph Minors XVII \cite{GM17}, the structure theorem is stated with vortices having a circular instead of a linear structure. 
For most applications, the linear decompositions as discussed so far in this paper are sufficient, but sometimes the circular structure is necessary. 
In this section, we introduce circular vortex decompositions and point out how we can derive a new lemma from the proof of Lemma~\ref{lem:vorticeslinked} that yields circular linkages for them. It is easy to see that we can apply this new lemma instead of Lemma~\ref{lem:vorticeslinked} at the end of the proof of Theorem~\ref{thm:richthm} and therefore, we can choose to have circular linkages for the large vortices when we apply the theorem.

For the remainder of this paper, we call decompositions of vortices as defined in Section~\ref{sec:structure_thms} \emph{linear decompositions} to distinguish them more clearly from the circular decompositions which we introduce now:

Let $V:=(G,\Omega)$ be a vortex with $\Omega=(w_1,\ldots,w_n)$. Let us regard the ordering of $\Omega$ as a cyclic ordering. A tuple $\D:=(X_1,\ldots,X_n)$ of subsets of~$V(G)$ is a \emph{circular decomposition} of $V$ if the following properties are satisfied:
\begin{enumerate}[(i)]
\item $w_i\in X_i$ for all $1\leq i \leq n$.
\item $X_1\cup\ldots\cup X_n=V(G)$.
\item When $w_i<w_j<w_k<w_\ell$ are society vertices of $V$ ordered with respect to the cyclic ordering $\Omega$, then $X_i\cap X_k\subseteq X_j\cup X_\ell$
\item Every edge of $G$ has both ends in $X_i$ for some $1\leq i \leq n$.
\end{enumerate}

The \emph{adhesion} of our circular decomposition $\D$ of $V$ is the maximum value of ${|X_{i-1}\cap X_{i}|}$, taken over all $1\leq i\leq n$. We define the \textit{circular adhesion} of $V$ as the minimum adhesion of a circular decomposition of that vortex.

When $\D$ is a circular decomposition of a vortex $V$ as above, we write ${Z_i:=(X_{i}\cap X_{i+1})\setminus \Omega}$, for all $1 \leq i< n$.  These $Z_i$ are the \emph{adhesion sets} of~$\D$. We call $\D$ \textit{linked} if 
\begin{itemize}
\item all these $Z_i$ have the same size;
\item there are $|Z_i|$ disjoint $Z_{i-1}$--$Z_{i}$ paths in~$G[X_i]-\Omega$, for all $1\leq i\leq n$;
\item $X_i \cap \Omega =\{w_{i-1},w_{i}\}$ for all $1\leq i \leq n$.
\end{itemize}
Note that $X_i\cap X_{i+1} = Z_i \cup \{w_i\}$, for all $1\leq i \leq n$ (Fig.~\ref{fig:linkedvortex}).

The union of the $Z_{i-1}$--$Z_{i}$ paths in a circular decomposition of~$V$ is a disjoint union of cycles in~$G$ each of which traverses the adhesion sets of~$\D$ in cyclic order (possibly several times); We call the set of these cycles a \emph{circular linkage} of~$V$ with respect to~$\D$.

As described in Section~\ref{sec:structure_thms} for linear decompositions, we see that we can delete a vertex from a circular decomposition of some vortex and obtain a new circular decomposition. This operation does not increase the adhesion but might decrease the number of society vertices.

Clearly, a linear decomposition of some vortex is a circular decomposition as well and it is easy to see that one can obtain a linear from a circular decomposition, if one deletes the overlap of two subsequent bags: Let $V:=(G,\Omega)$ a vortex and $(X_1,\ldots,X_n)$ a circular decomposition of $V$.
Delete the set $X_{i-1}\cap X_i$ from $V$ for some index $1\leq i\leq n$. We obtain a circular decomposition $\D:=(X'_1,\ldots,X'_{n'})$ of  $V-Z$ with $n'\leq n$. By shifting the indices if necessary we may assume that $X'_{n'}\cap X'_1$ is empty. $\D$ is a linear decomposition of $V-Z$: Pick a vertex $v\in X'_j\cap X'_\ell$ for indices $1\leq j <\ell\leq n'$. This vertex avoids either $X'_1$ or $X'_{n'}$, let us assume the former. We apply property (iii) from the definition of a circular decomposition to $w_1,w_j,w_k,w_\ell$ for any $k$ with $j < k < \ell$ and conclude that $v\in X'_k$.

To distinguish near-embeddings with linear decompositions from near-em\-bed\-dings with circular decompositions, we will call the latter explicitly \emph{near-embeddings with circular vortices}. Also, for a $(\alpha_0,\alpha_1,\alpha_2)$-near embedding with circular vortices let the third bound $\alpha_2$ denote an upper bound for the circular adhesion of the large vortices.

We give a modified definition of $\beta$-rich to comply with the new concepts. For near-embeddings with circular decompositions we replace property~(\ref{prop:linked}) by the following:
\begin{enumerate}
\item[(\ref{prop:linked}')]
Let $V \in \V$ with $\Omega(V)=(w_1,\ldots,w_n)$. Then there is a circular, linked decomposition of $V$ of adhesion at most $\alpha_2$ and a cycle $C$ in $V\cup \bigcup \W$ with $V(C\cap G_0)=\Omega(V)$ that avoids all the cycles of the circular linkage of~$V$, and traverses $w_1,\ldots,w_n$ in this order.
\end{enumerate}

\begin{lem}\label{lem:circularvorticeslinked}
Let $(\sigma,G_0,A,\V,\W)$ be  an $(\alpha_0,\alpha_1,\alpha_2)$-near embedding of a graph $G$ in a surface $\Sigma$ 
such that every small vortex $W\in\W$ is properly attached.
Moreover, assume that
\begin{enumerate}[\rm (i)]
\item 
For every vortex $V\in \V$ there are $\alpha_2+1$ concentric cycles $C_0(V),\ldots,C_{\alpha_2}(V)$ in $G'_0$ tightly enclosing $V$.
\item For distinct vortices $V,W\in\V$, the discs $\closure{D(C_0(V))}$ and $\closure{D(C_0(W))}$ are disjoint.
\end{enumerate}
Then there is a graph $\tilde G_0\subseteq G_0$ containing $G_0\setminus \Big (\bigcup_{V\in\V}D(C_0(V))\Big)$, 
a set $\tilde A\subseteq V(G)\setminus V(\tilde G_0)$ of size $|\tilde A|\leq\tilde\alpha:= \alpha_0+ \alpha_1(2\alpha_2+2)$, 
and sets $\tilde\V$ and $\tilde\W\subseteq \W$ of vortices
such that, with $\tilde \sigma:= \sigma|_{\tilde G'_0}$, the tuple $(\tilde\sigma,\tilde G_0,A\cup\tilde A,\tilde\V,\tilde\W)$ 
is an  $(\tilde\alpha,\alpha_1,\alpha_2+1)$-near embedding with circular vortices of $G$ in $\Sigma$ such that every vortex 
$\tilde V\in \tilde\V$ satisfies condition {\rm (\ref{prop:linked}')} of the definition of $(\beta, r)$-rich,
and $D(\tilde V)\supseteq D(V)$ for some $V\in\V$.
\end{lem}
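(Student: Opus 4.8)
The plan is to reprove Lemma~\ref{lem:vorticeslinked} step by step, keeping every construction cyclic instead of linear. Fix a large vortex $V\in\V$; by hypothesis it has a decomposition of adhesion at most~$\alpha_2$, which we read as a circular decomposition of adhesion at most~$\alpha_2$ (every linear decomposition is one), with cyclically indexed adhesion sets $Z_1,\dots,Z_n$. In the proof of Lemma~\ref{lem:vorticeslinked} one picks a vertex $v\in C_0(V)$, deletes a curve $S$ joining $v$ to $\Omega(V)$ together with one adhesion set of~$V$ -- thereby cutting the annular region between $C_0(V)$ and the (unembedded) vortex into a disc -- and then severs the cycles $C_i(V)$ at their unique vertex of~$S$ to obtain $\alpha_2+1$ disjoint $X$--$Y$ paths. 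The single change is \emph{not} to cut: we keep $V$ and the cycles $C_0(V),\dots,C_{\alpha_2}(V)$ intact and work throughout with systems $\P$ of $\alpha_2+1$ disjoint cycles in $G':=\bigl(G'_0\cap\closure{D(C_0(V))}\bigr)\cup V$, each of which separates, in $G'$, the cycle $C_0(V)$ from the core $V-\Omega(V)$. Such a system exists, namely $\{C_0(V),\dots,C_{\alpha_2}(V)\}$, and every cycle of such a system that happens to lie in $G'_0$ bounds a disc inside $D(C_0(V))$ that contains $\Omega(V)$, hence also $D(V)$.

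The next step is to establish the cyclic analogues of the internal claims of that proof. The analogue of~(\ref{claim:p0insurface}) asserts that the outermost cycle $P_0$ of~$\P$ -- the one playing the role of $C_0(V)$, that is, with no other cycle of $\P$ lying between it and $C_0(V)$ -- lies entirely in $G'_0$; the argument is the one used in Lemma~\ref{lem:vorticeslinked}, since a maximal arc of $P_0$ with interior in $V-\Omega(V)$ would have its two ends in $\Omega(V)$, and those two society vertices together with a single adhesion set $Z_q$ of $V$ -- at most $\alpha_2+2$ vertices in all -- would separate the core of $V$ from $C_0(V)$ and so block the $\alpha_2+1$ disjoint cycles of~$\P$. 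Then $P_0$ bounds a disc $D(P_0)\subseteq D(C_0(V))$ with $\Omega(V)\subseteq D(P_0)$; choosing $\P$ so that $G(\P):=\bigl(G'_0\cap\closure{D(P_0)}\bigr)\cup V$ is minimal, the analogues of~(\ref{claim:p0separations}) and~(\ref{claim:Bidecreasing}) provide, for every vertex $p_i$ of the cycle $P_0=p_0\dots p_{r-1}p_0$, a separator $T_i$ of $G(\P)$ of size $\alpha_2+1$ through $p_i$ that separates the two arcs of $P_0$ at $p_i$, the $T_i$ being cyclically ordered so that consecutive ones cut off successive regions $X_i$ of $G(\P)$. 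Setting $\Omega(\tilde V):=V(P_0)$ with the cyclic order inherited from $P_0$, and $X_i:=$ the region of $G(\P)$ between $T_{i-1}$ and $T_i$, yields a circular, linked decomposition of $\tilde V:=(G(\P),V(P_0))$ of adhesion at most $\alpha_2+1$ whose circular linkage is $\P\setminus\{P_0\}$; moreover $D(\tilde V)=D(P_0)\supseteq D(V)$.

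Finally, the small-vortex cleanup from the end of the proof of Lemma~\ref{lem:vorticeslinked} carries over verbatim: for every $W\in\W$ whose society meets $G(\P)$, either $\Omega(W)\subseteq G(\P)$ and we absorb $W$ into $\tilde V$, or $\Omega(W)$ meets $G(\P)$ only in vertices of $P_0$, and then we delete from $P_0$ the edges of $G'_0$ between these vertices, dent $D(\tilde V)$ accordingly, and reroute those edges along $P_0$ through $W$ (possible since $W$ is properly attached). The resulting cycle is the cycle $C$ demanded in~(\ref{prop:linked}'), with $V(C\cap G_0)=\Omega(\tilde V)$. Carrying this out for every $V\in\V$, deleting from $\W$ the small vortices absorbed into large ones to obtain $\tilde\W\subseteq\W$, and absorbing into $\tilde A$ the boundedly many vertices removed from the embedded part along the way (at most $2\alpha_2+2$ per large vortex, exactly as in the count in Lemma~\ref{lem:vorticeslinked}, whence $|\tilde A|\le\tilde\alpha$), we obtain the asserted $(\tilde\alpha,\alpha_1,\alpha_2+1)$-near embedding with circular vortices, with $\tilde G_0\supseteq G_0\setminus\bigl(\bigcup_{V\in\V}D(C_0(V))\bigr)$.

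The main obstacle is the topological bookkeeping that makes the cyclic picture precise: giving a clean meaning to ``a cycle of $G'$ separating $C_0(V)$ from the core of the unembedded vortex~$V$'', proving the cyclic form of~(\ref{claim:p0insurface}) and that the outermost such cycle bounds a disc containing $\Omega(V)$, and checking that the cyclically nested separators $T_i$ really define a valid circular decomposition -- in particular condition~(iii) of that definition, which needs a four-point interpolation argument in place of the three-point one of the linear case. Once these points are settled, extracting the circular linkage and deducing property~(\ref{prop:linked}') from the properly attached small vortices is formal, exactly as in Lemma~\ref{lem:vorticeslinked}.
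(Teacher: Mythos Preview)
Your approach differs from the paper's, and the argument you give for the cyclic analogue of~(\ref{claim:p0insurface}) has a real gap.

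The paper does not rework the proof with systems of cycles. Instead it keeps the curve $C$ and the vertex set $S$ from the proof of Lemma~\ref{lem:vorticeslinked}, but rather than deleting the vertices of $S$ on the cycles $C_i(V)$ it \emph{splits} each such vertex into two, one playing the role of $x_i$ and the other of $y_i$. It still deletes the adhesion set $Z$ at the society vertex $w'_j$. After this, the setup is literally that of Lemma~\ref{lem:vorticeslinked}: one has $\alpha_2+1$ disjoint $X$--$Y$ paths in a disc, and the entire linear proof applies verbatim. At the end the split pairs $(x_i,y_i)$ are re-identified, turning the linked linear decomposition into a linked circular one and the linkage paths into the cycles required by~(\ref{prop:linked}'). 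This costs only $|Z|$ apex vertices per large vortex, well within the stated bound.

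Your direct cyclic argument breaks down at the step you flag as routine. You assert that if the outermost cycle $P_0$ has an arc inside $V-\Omega(V)$ with ends $w_a,w_b$, then $\{w_a,w_b\}\cup Z_q$ separates the core of $V$ from $C_0(V)$ and hence ``blocks'' the $\alpha_2+1$ cycles. Neither half of this is correct. First, $\{w_a,w_b\}\cup Z_q$ is not in general a core--$C_0(V)$ separator: a path from the core can exit the vortex at any society vertex $w_c\notin\{w_a,w_b\}$ and proceed through the surface to $C_0(V)$ without meeting $Z_q$. Second, even a genuine core--$C_0(V)$ separator need not meet every separating cycle of $\P$; two disjoint concentric cycles in an annulus are each separators and each separating cycles, yet disjoint from one another. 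The linear proof works because $X$--$Y$ \emph{paths} must cross any $X$--$Y$ separator; separating \emph{cycles} enjoy no such property with respect to separators. What you would need is a set of size at most $\alpha_2$ that every separating cycle must hit (a ``radial cut''), and that is exactly what the splitting trick manufactures by reducing to the path setting. Your honest list of obstacles at the end is accurate, but the specific argument you offer for the first of them does not work; the paper's device avoids all of this bookkeeping in one line.
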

\begin{proof}
This lemma can be proven almost exactly like Lemma~\ref{lem:vorticeslinked}. To avoid completely rewriting the proof, we just point out the differences. 

The curve $C$ in the surface hits the vertex set $S$ which consists of exactly one vertex from each $C_i(V)$ and one society vertex $w'_j$ of $V$. We split each vertex in $S\setminus \{w'_j\}$: For each  $0\leq i \leq \alpha_2$, we replace $v\in S\cap V(C_i(V))$ by two new vertices $x_i, y_i$ and connect them with edges to the former neighbours of $v$ such that $C$ does not intersect any edges or vertices. The vertices $x_0,\ldots,x_{\alpha_2}$ and $y_0,\ldots,y_{\alpha_2}$ form the sets $X$ and $Y$, respectively. 

In the remainder of the proof we delete the set $Z$ instead of $S\cup Z$. At the end, we identify the vertex pairs $(x_i,y_i)$ for $0\leq i \leq \alpha_2$ and obtain a linked, circular decomposition as desired.
\end{proof}


\newpage

\small
\begin{tabular}{cc}

\begin{minipage}[t]{0.5\linewidth}
Reinhard Diestel\\
Mathematisches Seminar\\
Universit\"at Hamburg\\
Bundesstra\ss e 55\\
20146 Hamburg\\
Germany
\end{minipage}
&
\begin{minipage}[t]{0.5\linewidth}
Ken-ichi Kawarabayashi\\
National Institute of Informatics,\\
2-1-2, Hitotsubashi, Chiyoda-ku, Tokyo, Japan.\\
{\tt <k\_keniti@nii.ac.jp>}\\
{\footnotesize Research partly
supported by Japan Society for the
Promotion of Science, Grant-in-Aid for Scientific Research,
by C \& C Foundation, by
Kayamori Foundation and by Inoue Research Award
for Young Scientists.}
\end{minipage}
\\
~\vspace{5mm}
\\
\begin{minipage}[t]{0.5\linewidth}
Theodor M\"uller (corresponding author)\\
Dept.\ Mathematik\\
Universit\"at Hamburg\\
Bundesstra\ss e 55\\
20146 Hamburg\\
Germany\\
{\tt <mathematik@theome.com>}\\
{\footnotesize Research partially supported by NII MOU grant.}
\end{minipage}
&
\begin{minipage}[t]{0.5\linewidth}
Paul Wollan\\
Department of Computer Science\\
University of Rome ``La Sapienza"\\
Via Salaria 113\\
Rome, 00198 Italy\\
{\footnotesize Research partially supported by a research fellowship from the Alexander von Humboldt Foundation at the University of Hamburg and by NII MOU grant.}
\end{minipage}

\end{tabular}


\begin{thebibliography}{ddd}

\bibitem{BKMM} T. B\"ohme, K. Kawarabayashi, J. Maharry, B. Mohar,
Linear connectivity forces large complete bipartite minors,
J. Comb. Theory Ser. B 99 (2009), 557--582.

\bibitem{demaine1} E. D. Demaine, F. Fomin, M. Hajiaghayi, and D. Thilikos,
Subexponential parameterized algorithms on bounded-genus graphs and
$H$-minor-free graphs,
{\it J. ACM} {\bf 52} (2005), 1--29.


\bibitem{demaine2} E. D. Demaine, M. Hajiaghayi, and K. Kawarabayashi,
Algorithmic graph minor theory: Decomposition, approximation and coloring,
Proc.\ 46th Ann.\ IEEE Symp.\ Found.\ Comp.\ Sci.,
Pittsburgh, PA, 2005, pp.~637--646.

\bibitem{demaine3} E. D. Demaine, M. Hajiaghayi, and B. Mohar,
Approximation algorithms via contraction decomposition,
{\it Proc.\ 18th Annual ACM-SIAM Symposium on Discrete Algorithms (SODA'07)},
278--287, (2007).

\bibitem{DemaineHajiaghayi08}
	{\sc E. Demaine}, {\sc M. Hajiaghayi},
	``Linearity of Grid Minors in Treewidth with Applications through Bidimensionality'',
	Combinatorica, 28 (2008), 19--36

\bibitem{diestel} 
	{\sc R. Diestel}, 
	``Graph Theory'', 
	4th ed. Springer-Verlag, 2010

\bibitem{DKW} R. Diestel, K. Kawarabayashi, P. Wollan,
The Erd\H{o}s-P\'{o}sa property for clique minors in highly connected graphs,
manuscript~2009.

\bibitem{DGJT} {\sc R. Diestel}, {\sc K.Yu. Gorbunov}, {\sc T.R. Jensen}, {\sc C. Thomassen},
``Highly connected sets and the excluded grid theorem'',
J. Comb. Theory Ser. B 75 (1999), 61--73.

\bibitem{GH} T. Huynh, Dissertation, University of Waterloo, 2009.
	
\bibitem{JORG2}
K. Kawarabayashi, S. Norine, R. Thomas, P. Wollan, 
$K_6$ minors in large $6$-connected graphs, 
manuscript~2009.

\bibitem{kmstoc} K. Kawarabayashi and B. Mohar,
Approximating the chromatic number and the list-chromatic
number of minor-closed family of graphs and odd-minor-closed family
of graphs, {\it Proc. the 38th ACM Symposium on Theory
of Computing (STOC'06)}, (2006), 401--416.
\bibitem{graphsonsurfaces}
	{\sc B.~Mohar}, {\sc C.~Thomassen},
	``Graphs on Surfaces'',
	Johns Hopkins University Press, 2001
\bibitem{GM5}
	{\sc N.~Robertson}, {\sc P.~Seymour},
	``Graph minors. V: Excluding a Planar Graph'',
	J. Comb. Theory Ser. B 41 (1986), 92--114.
\bibitem{GM10}
	{\sc N.~Robertson}, {\sc P.~Seymour},
	``Graph minors. X: Obstructions to tree-Decomposition'',
	J. Comb. Theory Ser. B 52 (1991), 153--190.
\bibitem{GM16}
	{\sc N.~Robertson}, {\sc P.~Seymour},
	``Graph minors. XVI: Excluding a non-planar graph'',
	J. Comb. Theory Ser. B 89 (2003), 43--76.
\bibitem{GM17}
	{\sc N.~Robertson}, {\sc P.~Seymour},
	``Graph minors. XVII: Taming a Vortex'',
	J. Comb. Theory Ser. B 77 (1999), 162--210.

 
\end{thebibliography}
\end{document}